\newtheorem{Th}{Theorem}
\newtheorem{Main-Th}{Main Theorem}
\newtheorem{Cor}{Corollary}
\newtheorem{Pro}{Proposition}
\newtheorem{Lem}{Lemma}
\theoremstyle{definition}
\newtheorem{Conv}{Convention}
\newtheorem{Nots}{Notations}
\theoremstyle{remark}
\newtheorem{Rk}{Remark}
\newtheorem{Rks}{Remarks}
\newenvironment{Proof}{{\bf Proof:}}
{%
\mbox{}%
\nolinebreak%
\hfill%
\rule{2mm}{2mm}%
\medbreak%
\par%
}
\numberwithin{equation}{section}
\renewcommand{\thefootnote}{\fnsymbol{footnote}}
\title{\bf\large  Natural Ricci Solitons on tangent and unit tangent bundles }
\author{Mohamed Tahar Kadaoui Abbassi and Noura Amri}
\date{}
\begin{document}
\maketitle

\begin{abstract}
Considering pseudo-Riemannian $g$-natural metrics on tangent bundles, we prove that the condition of being Ricci soliton is hereditary in the sense that a Ricci soliton structure on the tangent bundle gives rise to a Ricci soliton structure on the base manifold. Restricting ourselves to some class of pseudo-Riemannian $g$-natural metrics, we show that the tangent bundle is a Ricci soliton if and only if the base manifold is flat and the potential vector field is a complete lift of a conformal vector field. We give then a classification of conformal vector fields on a flat Riemannian manifold. When unit tangent bundles over a constant curvature Riemannian manifold are endowed with pseudo-Riemannian Kaluza-Klein type metric, we give a classification of Ricci soliton structures whose potential vector fields are fiber-preserving, inferring the existence of some of them which are non Einstein.

\medskip
{\it Keywords:} tangent bundle, unit tangent (sphere) bundle, $g$-natural metrics, metrics of Kaluza-Klein type, Ricci solitons.\\
{\it 2010 Mathematics subject classification:} 53C25, 53D25.

\renewcommand{\thefootnote}{\arabic{footnote}}


\renewcommand{\thefootnote}{\arabic{footnote}}
\end{abstract}


\section*{Introduction}

Let $M$ be a smooth manifold of dimension $n \geq 2$. A \emph{Ricci soliton} on $M$ is a triple $(g,V,\lambda)$, where $g$ is a pseudo-Riemannian metric on $M$, $\textup{Ric}$ the associated Ricci tensor, $V$ a vector field (called the {\em potential vector field}) and $\lambda$ a real constant, satisfying the equation
\begin{equation}
    \textup{Ric}+\frac{1}{2}\mathcal{L}_Vg=\lambda g,
\end{equation}
$\mathcal{L}$ being the Lie derivative. If there is a $C^\infty$-function on $M$ such that  $\textup{Ric}+\nabla^2 f=\lambda g$, for some real constant $\lambda$, then $(M,g)$ is said \emph{gradient Ricci soliton}, where $\nabla$ is the Levi-Civita connection of $(M,g)$. A Ricci soliton is said to be either  {\em shrinking}, {\em steady}, or {\em expanding}, according on whether $\lambda$ is negative, zero, or positive, respectively.  Ricci solitons are a natural generalization of Einstein manifolds (an Einstein metric, together with a Killing vector field $V$, is a trivial Ricci soliton). Furthermore, they are the self-similar solutions to the Ricci flow.

Ricci solitons have been intensively studied in many contexts and from many points of view (we may refer to  \cite{Cao} and \cite{Cho-Kno} for details on the geometry of Ricci solitons). We are interested, in this paper, in the study of Ricci soliton structures in the framework of the geometry of tangent bundles. Indeed, {Metrics on tangent and unit tangent sphere bundles have been an important source of examples in Differential Geometry. In particular, {\em $g$-natural metrics}, which generalize the {\em Sasaki metric} and still arise in a natural way from the metric of the base manifold, have been intensively studied during the last decades. Unlike the Sasaki metric which shows a very rigid behaviour, the large class of $g$-natural metrics provides examples for several different interesting geometric properties (cf. \cite{Abb-Cal1}, \cite{ACP1}, \cite{Abb-Kow2}, \cite{Abb-Kow3}, \cite{Abb-Sar4}, \cite{CMM}, \cite{CP2}, \cite{CP3}, \cite{Per} and references therein).

In \cite{KAC}, the authors treated the problem of finding Ricci soliton structures on the unit tangent bundle of a Riemannian manifold, endowed with a pseudo-Riemannian {\em Kaluza-Klein type} metric, which is an interesting subclass of the class of $g$-natural metrics that shares with the Sasaki metric the property of preserving the orthogonality of horizontal and vertical distributions. They obtain a rigidity result in dimension three, showing that there are no nontrivial Ricci solitons among g-natural metrics of Kaluza–Klein type on the unit tangent sphere bundle of any Riemannian surface. On the other hand, they proved that, while Ricci solitons determined by tangential lifts remain trivial (i.e. an Einstein manifold) in arbitrary dimension, horizontal lifts of vector fields related to the geometry of flat base manifold (namely, homothetic vector fields) produce nontrivial Ricci solitons metrics of Kaluza–Klein type. They also gave a complete characterization of Gradient Ricci solitons of Kaluza–Klein type.

In this paper, we are interested in \emph{natural Ricci soliton} structures on tangent and unit tangent bundles of Riemannian manifolds, i.e. those associated with pseudo-Riemannian $g$-natural metrics. Our purpose is twofold. On one hand, we investigate natural Ricci soliton structures on tangent bundles of Riemannian manifolds. We prove that every natural Ricci soliton structure on the tangent bundle gives rise to a Ricci soliton structure on the base manifold, confirming the ``heridity" phenomenon of $g$-natural metrics (cf. \cite{Abb-Sar4}). Restricting ourselves $g$-natural metrics which are linear combinations of the three classical lifts (Sasaki, horizontal and vertical) of the base metric with constant coefficients, we give a complete characterization of Ricci soliton structures on the tangent bundle. We prove, in particular, that the existence of such structures requires the flatness of the base manifold, which constitutes a kind of rigidity of such metrics. Furthermore, this ensures the existence of non-trivial natural Ricci soliton structures on the tangent bundle of a Riemannian manifold.

On the other hand, we are looking for nontrivial natural Ricci solitons structures on the unit tangent bundle when the base manifold is of constant sectional curvature which is not necessarily zero. In this sense, we prove that the complete lift to the unit tangent bundle of a non-zero homothetic vector field on the base manifold is the potential vector field of a non trivial Ricci soliton structure on the unit tangent bundle endowed with an appropriately chosen pseudo-Riemannian Kaluza-Klein type metric. Furthermore, we shall give a complete classification of fiber-preserving vector fields on the unit tangent bundle which are the potential vector fields of Ricci soliton structures on the unit tangent bundle endowed with a pseudo-Riemannian Kaluza-Klein type metric.

Finally, it is worth mentioning that the existence of natural Ricci soliton structures either on tangent bundles (Theorems \ref{Solitons-comb1} and \ref{hom-flat}) or on unit tangent bundles (Theorem \ref{Ccomplet}) requires the existence of non-zero homothetic vector fields on the base manifold, which could presuppose some topological restrictions to the base manifold.

Hereafter, we will use the Einstein' summation convention.


\section{Preliminaries}



\subsection*{Basic formulas on tangent bundles}


Let $(M,g)$ be an $n$-dimensional Riemannian manifold and $\nabla$ the Levi-Civita
connection of $g$. We shall denote by $M_x$ the tangent space of $M$ at a point $x \in M$. The tangent space of $TM$ at any point $(x,u)\in TM$ splits into the horizontal and vertical subspaces with respect to $\nabla$:
$$(TM)_{(x,u)}=H_{(x,u)}\oplus V_{(x,u)}.$$ \par

For $(x,u)\in TM$ and  $X\in M_x$, there
exists a unique vector $X^h  \in H_{(x,u)}$ such that $p_* X^h =X$, where $p:TM \rightarrow M$ is the natural projection. We call $X^h$ the \emph{horizontal lift} of $X$ to the point $(x,u)\in TM$. The \emph{vertical lift} of a vector $X\in M_x$ to $(x,u)\in TM$ is the vector $X^v  \in V_{(x,u)}$ such that $X^v (df) =Xf$, for all functions $f$ on $M$. Here we consider $1$-forms $df$ on $M$ as functions on $TM$ (i.e., $(df)(x,u)=uf$).

Observe that the map $X \to X^h$ is an isomorphism between the vector spaces $M_x$ and $H_{(x,u)}$. Similarly, the map $X \to X^v$ is an isomorphism between the vector spaces $M_x$ and $V_{(x,u)}$. Obviously, each tangent vector $\tilde Z \in (TM)_{(x,u)}$ can be written in the form $\tilde Z =X^h + Y^v$, where $X,Y \in M_x$ are uniquely determined vectors.\par

Horizontal and vertical lifts of vector fields on $M$ are defined in a corresponding way. Each system of local coordinates $\{(U; x^i, i=1,..., n)\}$ in $M$ induces on $TM$ a system of local coordinates $\{(p^{-1}(U); x^i, u^i, i=1,...,n)\}$. Let $X=\sum _i X^i \left(\frac {\partial}{\partial x^i}\right)_x$ be the local expression in $\{(U; x^i, i=1,..., n)\}$ of a vector $X$ in $M_x$, $x \in M$. Then, the horizontal lift $X^h$ and the vertical lift $X^v$ of $X$ to $(x,u) \in TM$ are given, with respect to the induced coordinates, by:
\begin{equation}\label{lift2}
\begin{array}{l}
 X^h = \sum X^i \left(\frac {\partial}{\partial x^i}\right)_{(x,u)}
\, -\,\sum \Gamma_{jk}^iu^jX^k \left(\frac {\partial}{\partial
u^i}\right)_{(x,u)}
\end{array}
\end{equation}
and
\begin{equation}
\begin{array}{l}
X^v  = \sum
X^i \left(\frac {\partial}{\partial u^i}\right)_{(x,u)},
\label{lift3}
\end{array}
\end{equation}
where $(\Gamma_{jk}^i)$ denote the Christoffel's symbols of $g$. \par

Let $K:TTM \rightarrow TM$ be the connection map corresponding to the Levi-Civita connection $\nabla$ of $(M,g)$. Note that $K$ is characterized by $K(X^h)=0$ and $K(X^v)=X$, for all $X \in TM$.\par

The {\em canonical vertical vector field $\mathcal{U}$} on $TM$ is  defined, in terms of local coordinates, by $\mathcal{U}=\sum_i u^i \partial/\partial u^i$. Here $\mathcal{U}$
does not depend on the choice of local coordinates and is defined globally on $TM$. For a vector $u=\sum_i u^i (\partial/\partial x^i)_x \in M_x$, we see that $u^v_{(x,u)}= \sum_i u^i (\partial/\partial x^i)^v_{(x,u)}= \mathcal{U}_{(x,u)}$ and $u^h_{(x,u)}= \sum_i u^i (\partial/\partial x^i)^h_{(x,u)}$ (which is also known as the {\em geodesic vector field}).\par

There are three other interesting vector fields on the tangent bundle obtained by lifting operations of geometric objects on $M$: For any vector field $X$ and a $(1,1)$-tensor field $P$ on $M$, we define the vector fields $X^c$, $\iota P$ and $^*P$ on $TM$, by
\begin{eqnarray}
X^c_{(x,u)} &=& X^h_{(x,u)}+ (\nabla_u X)^v_{(x,u)}, \\
(\iota P)_{(x,u)} &=& [P(u)]^v_{(x,u)}, \\
(^*P)_{(x,u)} &=& [P(u)]^h_{(x,u)},
\end{eqnarray}
for all $(x,u) \in TM$. $X^c$ is called the \emph{complete lift} of $X$. It is easy to see that $X^c=X^h+ \iota(\nabla X)$. \par

The Riemannian curvature $R$ of $g$ is defined by
\begin{equation}\label{r-cur}
R(X,Y)=[\nabla_X,\nabla_Y]\,-\,\nabla_{[X,Y]}.
\end{equation}\par
%


\subsection*{$g$-natural metrics on tangent bundles}


There are three distinguished constructions of metrics on the tangent bundle $TM$ \\
(a) the Sasaki metric $g^s$ is deﬁned by
\begin{center}
	$\begin{array}{lll}
	g_{(x,u)}^{s}(X^h,Y^h)&=	&g(X,Y)
	\\
	g_{(x,u)}^{s}(X^v,Y^h)& =&0\\
	\end{array}$
	$\begin{array}{lll}
	g_{(x,u)}^{s}(X^h,Y^v)&=	&0
	\\
	g_{(x,u)}^{s}(X^v,Y^v)& =&g(X,Y)\\
	\end{array}$
\end{center}
for all $X, Y\in M_x.$\\ (b) The horizontal lift $g^h$ of $g$ is a pseudo-Riemannian metric on $TM,$ given by
\begin{center}
	$\begin{array}{lll}
	g_{(x,u)}^{h}(X^h,Y^h)&=	&0
	\\
	g_{(x,u)}^{h}(X^v,Y^h)& =&g(X,Y)\\
	\end{array}$
	$\begin{array}{lll}
	g_{(x,u)}^{h}(X^h,Y^v)&=	&g(X,Y)
	\\
	g_{(x,u)}^{h}(X^v,Y^v)& =&0\\
	\end{array}$
\end{center}
for all $X,Y \in M_x.$ \\ (c) The vertical lift $g^v$ of $g$ is a degenerate metric on $TM,$ given by
\begin{center}
	$\begin{array}{lll}
	g_{(x,u)}^{v}(X^h,Y^h)&=	&g(X,Y)
	\\
	g_{(x,u)}^{v}(X^v,Y^h)& =&0\\
	\end{array}$
	$\begin{array}{lll}
	g_{(x,u)}^{v}(X^h,Y^v)&=	&0
	\\
	g_{(x,u)}^{v}(X^v,Y^v)& =&0\\
	\end{array}$
\end{center}
for all $X, Y \in M_x.$\\
Starting from a Riemannian manifold $(M,g)$, a natural construction leads to introduce a wide class of metrics, called {\em $g$-natural},  on the tangent bundle $TM$ (\cite{Kol-Mic-Slo},\cite{Kow-Sek1}). Such metrics are the image of $g$ under first order natural operators $D:S_+^2T^* \rightsquigarrow (S^2T^*)T$, which transform Riemannian metrics on manifolds into metrics on their tangent bundles, where
$S_+^2T^*$ and $S^2T^*$ denote the bundle functors of all
Riemannian metrics and all symmetric $(0,2)$-tensors over $n$-manifolds respectively.

Given an arbitrary $g$-natural metric $G$ on the tangent bundle $TM$ of a Riemannian manifold $(M,g)$, there exist six smooth functions $\alpha_i$, $\beta_i:\mathbb{R}^+ \rightarrow \mathbb{R}$, $i=1,2,3$, such that (see \cite{Abb-Sar4})
\arraycolsep1.5pt
\begin{equation}\label{exp-$g$-nat}
 \left\lbrace
\begin{array}{rcl}
G_{(x,u)}(X^h,Y^h)& = & (\alpha_1+ \alpha_3)(r^2) g_x(X,Y)+ (\beta_1+ \beta_3)(r^2)g_x(X,u)g_x(Y,u),\\
G_{(x,u)}(X^h,Y^v)& = & G_{(x,u)}(X^v,Y^h)\\
                  & = & \alpha_2 (r^2) g_x(X,Y)+  \beta_2 (r^2) g_x(X,u)g_x(Y,u), \\
G_{(x,u)}(X^v,Y^v)& = & \alpha_1 (r^2) g_x(X,Y) + \beta_1 (r^2) g_x(X,u)g_x(Y,u),
\end{array}
\arraycolsep5pt \right.
\end{equation}
for every $u$, $X$, $Y\in M_x$, where $r^2 =g_x(u,u)$.   Put
\begin{equation*}
\begin{split}
\phi_i(t)= & \alpha_i(t) +t \beta_i(t), \quad \alpha(t) = \alpha_1(t) (\alpha_1+\alpha_3)(t) - \alpha_2 ^2(t), \\
    & \phi(t) = \phi_1(t) (\phi_1 +\phi_3)(t) -\phi_2^2(t),
\end{split}
\end{equation*}
for all $t \in \mathbb{R}^+$. 
%
%
{It it easily seen that $G$ is}
\begin{itemize}
\item non-degenerate if and only if
$$
\alpha(t) \neq 0, \qquad  \phi(t) \neq 0 \qquad \text{for all} \;\, t \in \mathbb{R}^+ ;
$$
\item
Riemannian if and only if
$$
\alpha_1(t) > 0,  \qquad \phi_1(t) > 0, \qquad \alpha(t) > 0, \qquad  \phi(t)>0 \qquad \text{for all} \;\, t \in \mathbb{R}^+ .
$$
\end{itemize}

The wide class of $g$-natural metrics includes several well known metrics (Riemannian and not) on $TM$. In particular:
\begin{itemize}
\item the {\em Sasaki metric} $g_S$ is obtained for $\alpha _1 =1$ and $\alpha _2 = \alpha _3 = \beta _1 =\beta _2 = \beta _3 =0$.
\item {\em Kaluza--Klein metrics}, as commonly defined on principal bundles {(see for example \cite{Wood})}, are obtained for
$\alpha _2 = \beta _2 = \beta _1 +\beta _3 = 0$.
\item {\em Metrics of Kaluza--Klein type} are defined by the geometric condition of orthogonality between horizontal and vertical distributions \cite{CP,CP3}. Thus, a  $g$-natural metric $G$ is of Kaluza-Klein type if $\alpha _2=\beta _2 =0$.
\end{itemize}


\subsection*{$g$-natural metrics on unit tangent bundles}


The  \emph{ unit tangent bundle} over a Riemannian manifold $(M,g)$ is the hypersurface of $TM$, given by
$$T_1 M= \{(x,u) \in TM \, | \, g_x(u,u)=1 \}.$$
We will denote by $p_1 : T_1 M \to M$ the bundle projection. The tangent space of $T_1 M$ at a point $(x,u) \in T_1M$ is given by
\begin{equation}\label{tang-space}
(T_1 M)_{(x,u)}=\{X^h +Y^v /X \in M_x,  Y \in \{u\}^\perp \subset M_x\}.
\end{equation}\par

By definition, {\em $g$-natural metrics on the unit tangent bundle} are the metrics induced on the hypersurface $T_1 M$ by corresponding $g$-natural metrics on $TM$. As proved in \cite{Abb-Kow2} for the Riemannian case, and extended to pseudo-Riemannian settings in \cite{CMM}, they are completely determined by the values of the four
real constants
$$a:=\alpha_1 (1), \quad b:=\alpha_2 (1), \quad c:=\alpha_3 (1), \quad d:=(\beta_1+\beta_3) (1)
$$
and  admit the following explicit description.

\begin{Th}[\cite{Abb-Kow2}] 	\label{nat-met-unit}
	Let $(M,g)$ be a Riemannian manifold. For every pseudo-Riemannian metric $\tilde{G}$ on $T_1 M$ induced from a $g$-natural metric $G$ on $TM$, there exist four constants $a$, $b$, $c$ and $d$, satisfying the inequalities
	$$a \neq 0, \quad \alpha:=a(a+c)-b^2 \neq 0, \quad \varphi:=a+c+d \neq 0$$
	(in particular, they are Riemannian if $a, \alpha, \varphi>0$), such that
	\begin{equation}\label{exp-metr-0}
	\left\lbrace \arraycolsep1.5pt
	\begin{array}{lcl}
	\tilde{G}_{(x,u)}(X_1^h,X_2^h)  &=& (a+c) g_x(X_1,X_2) +d g_x(X_1,u)g_x(X_2,u), \\[2pt]
	\tilde{G}_{(x,u)}(X_1^h,Y_1^v)  &=& b g_x(X_1,Y_1),\\[2pt]
	\tilde{G}_{(x,u)}(Y_1^v,Y_2^v)  &=& a g_x(Y_1,Y_2) ,
	\end{array}
	\right.
	\end{equation}
	for all $(x,u) \in T_1 M$, $X_1$, $X_2 \in M_x$ and $Y_1$, $Y_2 \in
	\{u\}^\perp \subset M_x$.
\end{Th}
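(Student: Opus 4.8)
The plan is to obtain $\tilde G$ by restricting the general form \eqref{exp-$g$-nat} of $G$ to the hypersurface $T_1M$ and reading off the induced coefficients. The first point is that on $T_1M$ the squared norm $r^2=g_x(u,u)$ is identically $1$, so along $T_1M$ each of the six functions $\alpha_i(r^2)$, $\beta_i(r^2)$ freezes to the constant $\alpha_i(1)$, $\beta_i(1)$. Next I would use the description \eqref{tang-space} of the tangent space: a tangent vector to $T_1M$ at $(x,u)$ has the form $X^h+Y^v$ with $X\in M_x$ arbitrary but $Y$ constrained to lie in $\{u\}^\perp$. Hence, to determine $\tilde G$ it suffices to evaluate $G$ on pairs $(X_1^h,X_2^h)$, $(X_1^h,Y_1^v)$ and $(Y_1^v,Y_2^v)$ with $X_1,X_2\in M_x$ and $Y_1,Y_2\in\{u\}^\perp$. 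Because $g_x(Y_i,u)=0$, every term of \eqref{exp-$g$-nat} carrying a factor $g_x(\,\cdot\,,u)$ that pairs with a vertical argument drops out, and evaluating what is left at $r^2=1$ yields exactly \eqref{exp-metr-0} with
\[
a:=\alpha_1(1),\qquad b:=\alpha_2(1),\qquad c:=\alpha_3(1),\qquad d:=(\beta_1+\beta_3)(1).
\]
Note that only the combination $\beta_1+\beta_3$ survives, which explains why $\tilde G$ depends on $G$ through these four constants alone.

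It remains to determine when $\tilde G$ is non-degenerate, respectively positive definite. Fix $(x,u)\in T_1M$ and extend $u$ to a $g_x$-orthonormal basis $\{u,e_1,\dots,e_{n-1}\}$ of $M_x$; by \eqref{tang-space}, the family $\{u^h,e_1^h,\dots,e_{n-1}^h,e_1^v,\dots,e_{n-1}^v\}$ is then a basis of $(T_1M)_{(x,u)}$. Substituting these vectors into \eqref{exp-metr-0} and using $g_x(u,u)=1$ and $g_x(e_i,u)=0$, one checks that the Gram matrix of $\tilde G$ in this basis is block-diagonal: a $1\times1$ block $(\varphi)$, with $\varphi:=a+c+d$, attached to $u^h$, together with $n-1$ identical $2\times2$ blocks $\left(\begin{smallmatrix} a+c & b\\ b & a\end{smallmatrix}\right)$, one for each pair $(e_i^h,e_i^v)$. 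Consequently $\det\tilde G=\varphi\,\alpha^{\,n-1}$ with $\alpha:=a(a+c)-b^2$, so $\tilde G$ is non-degenerate precisely when $\varphi\neq0$ and $\alpha\neq0$; the remaining requirement $a\neq0$ is the value $\alpha_1(1)$, which is part of the standing hypotheses on $G$. For the Riemannian case one instead imposes positive definiteness of each block, which amounts to $\varphi>0$, $a>0$ and $\alpha=a(a+c)-b^2>0$.

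The argument is essentially a bookkeeping exercise, and I do not expect a genuine obstacle; the two points that do require care are the following. First, one must keep the term $d\,g_x(X_1,u)g_x(X_2,u)$ in the horizontal-horizontal component — it survives because both arguments are horizontal lifts of vectors that need not be orthogonal to $u$ — while discarding all other $g_x(\,\cdot\,,u)$ terms, which is exactly the effect of the constraint $Y\in\{u\}^\perp$ in \eqref{tang-space}. Second, the choice of the $u$-adapted frame is what block-diagonalizes the Gram matrix into $(\varphi)$ and the copies of $\left(\begin{smallmatrix} a+c & b\\ b & a\end{smallmatrix}\right)$, thereby rendering the non-degeneracy and signature conditions transparent. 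The real content of the statement is this reduction: although a pseudo-Riemannian $g$-natural metric on $TM$ carries six functional parameters, its restriction to $T_1M$ collapses to a dependence on the four real constants $a,b,c,d$, subject only to the three displayed inequalities.
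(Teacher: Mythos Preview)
The paper does not supply its own proof of this theorem: it is quoted from \cite{Abb-Kow2} (with the pseudo-Riemannian extension attributed to \cite{CMM}) and used as background. Your argument is the natural one and is correct in substance: freeze $r^2=1$, use \eqref{tang-space} to restrict vertical arguments to $\{u\}^\perp$, read off \eqref{exp-metr-0}, and then block-diagonalize the Gram matrix in a $u$-adapted orthonormal frame to obtain the $1\times1$ block $(\varphi)$ and the $(n-1)$ copies of $\left(\begin{smallmatrix} a+c & b\\ b & a\end{smallmatrix}\right)$.

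One small point deserves tightening. Your justification for $a\neq0$ (``part of the standing hypotheses on $G$'') is not quite right: the non-degeneracy of $G$ on $TM$ is stated in the paper as $\alpha(t)\neq0$ and $\phi(t)\neq0$, which at $t=1$ does not by itself force $a=\alpha_1(1)\neq0$ (e.g.\ $a=0$, $b\neq0$ gives $\alpha(1)=-b^2\neq0$). Likewise, your determinant $\varphi\,\alpha^{n-1}$ shows that non-degeneracy of $\tilde G$ alone only needs $\varphi\neq0$ and $\alpha\neq0$. The condition $a\neq0$ is genuinely an additional hypothesis in the cited result (it is automatic in the Riemannian case of \cite{Abb-Kow2}, where $\alpha_1>0$, and is retained in the pseudo-Riemannian extension); you should simply record it as such rather than derive it.
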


\noindent
In particular, the {\em Sasaki metric} on $T_1 M$ corresponds to the case where $a=1$ and $b=c=d=0$; {\em Kaluza-Klein metrics} are obtained when $b=d=0$; {\em metrics of Kaluza-Klein type} are given by the case $b=0$. In the remaining part of the paper,  we shall restrict ourselves to the latter type of $g$-natural metrics on $T_1 M$.

Since $G$  must be nondegenerate, throughout the paper we implicitly assume that inequalities $a \neq 0$, $a+c \neq 0$ and $\varphi:=a+c+d \neq 0$ hold (they are exactly the special case of inequalities from Theorem \ref{nat-met-unit} for $b=0$).
\par

For any vector field $Z$ on $TM$, we define its tangential component $t\{Z\}$ at points of $T_1M$, by $t\{Z\}:=Z|_{T_1M} -G(Z|_{T_1M},N)N$, obtaining a vector field on $T_1M$. When $Z$ is the horizontal, the vertical or the complete lift of a vector field $X$ on $M$ or the vector field $\iota P$, for a $(1,1)$-tensor field $P$ on $M$, then we obtain, respectively, the following special vector fields on $T_1M$:
\begin{itemize}
	\item [a-] the \emph{horizontal lift} to $T_1M$ of $X$, denoted also $X^h$, and given by $X^h:=X^h|_{T_1M}$, since $X^h$ is orthogonal to $N$ everywhere on $T_1M$;
	\item [b-] the \emph{tangential lift} $X^{t} $ with respect to $G$ of $X$, defined as the tangential component vector field of the vertical lift $X^v$ of $X$, that is,
	\begin{equation}\label{tan}
	X^{t}_{(x,u)} =[X-g_x(u,X)u]^v_{(x,u)}+\frac{b}{\varphi}g_x(u,X)u^h_{(x,u)},
	\end{equation}
	for all $(x,u) \in T_1M$. If $X_x \in M_x$ is orthogonal to $u,$ then $X^t_{(x,u)}= X^v_{(x,u)}$;
	\item [c-] the \emph{complete lift} to $T_1M$ of $X$, denoted by $X^{\bar{c}}$, is given by
	\begin{equation}\label{comp}
	X^{\bar{c}}_{(x,u)}=X^h_{(x,u)}+ (\nabla_u X)^t_{(x,u)},
	\end{equation}
	for all $(x,u) \in T_1M$. It is worth mentioning that the complete lift $X^c$ on $TM$, restricted to $T_1M$, coincides with $X^{\bar{c}}$ if and only if $X$ is a Killing vector field on $M$;
	\item [d-] the vector field $\tilde{\iota} P$ on $T_1M$, given by
	\begin{equation}\label{iot}
	(\tilde{\iota} P)_{(x,u)}= [P(u)]^t_{(x,u)},
	\end{equation}
	for all $(x,u) \in T_1M$.
\end{itemize}

We will make use of the following relations, which are easy to check:
\begin{eqnarray}
&& X^h(f \circ p_1) = X(f) \circ p_1, \label{Pr1}\\
&& X^t(f \circ p_1)=0,   \label{Pr2}\\
&&  X^h_{(x,u)}(g(Y,.)) = g(\nabla_{X_x}Y,u)  \label{Pr3}\\
&& X^t_{(x,u)}(g(Y,.))=g(X_x,Y_x)-g(X_x,u)g(Y_x,u), \label{Pr4}
\end{eqnarray}
for all $X,Y \in \mathfrak{X}(M)$, $f\in C^\infty(M)$ and $(x,u) \in T_1M$.

\begin{Conv}\label{conv}
	Because $u^{t}_{(x,u)}=0$, for $(x,u) \in T_1 M$, which means that the tangent space $(T_1 M)_{(x,u)}$ coincides with the set
	\begin{equation}\label{tang-space-G}
	\{X^h +Y^{t} /X \in M_x, Y \in \{u\}^\perp \subset M_x\},
	\end{equation}
	the operation of tangential lift from $M_x$ to a point $(x,u) \in T_1 M$ will be always applied only to vectors of $M_x$ which are orthogonal to $u$.
\end{Conv}

Assuming that $(M,g)$ is of constant curvature $\kappa$, for a $g$-natural metric $\widetilde{G}$ of Kaluza-Klein type, the following formulas are deduced from the expressions of the Levi-Civita connection of $(T_1 M,\widetilde{G})$ given in \cite{Abb-Kow1}:

\begin{Pro}\label{cor-LC}
	If $(M,g)$ is a Riemannian manifold of constant sectional curvature $\kappa$ and $\widetilde{G}$ is a pseudo-Riemannian $g$-natural metric of Kaluza-Klein type on $T_1M$, then the Levi-Civita connection  $\widetilde{\nabla}$ of $(T_1 M,\widetilde{G})$ is given by: \arraycolsep2pt
	\begin{equation*}
	\begin{array}{ll}
	(\widetilde{\nabla}_{X^h}Y^h)_{(x,u)} =& \{\nabla_XY\}^h_{(x,u)} +\frac{1}{2a}\left\{ -(a\kappa+d)g(Y_x,u)X_x \frac{}{}\right.\\[4pt]
	&  \left.\frac{}{}+(a\kappa-d)g(X_x,u)Y_x \right\}^{t}_{(x,u)},\\[6pt]
	(\widetilde{\nabla}_{X^h}Y^{t})_{(x,u)} =& \left\{\frac{d-a\kappa}{2(a+c)} g(X_x,u)Y_x\right.+\frac{1}{2\varphi}\left[(a\kappa+d) g(X_x,Y_x)\frac{}{}\right.\\[4pt]
	&  \left.\left.+\frac{d(a\kappa+d-2\varphi)}{a+c}g(X_x,u)g(Y_x,u)\right]u\right\}^h_{(x,u)} +\{\nabla_XY\}^{t}_{(x,u)}, \\[6pt]
	(\widetilde{\nabla}_{X^{t}}Y^{h})_{(x,u)} =& \left\{\frac{d-a\kappa}{2(a+c)} g(Y_x,u)X_x\right.+\frac{1}{2\varphi}\left[(a\kappa+d) g(X_x,Y_x)\frac{}{}\right.\\[4pt]
	&  \left.\left.+\frac{d(a\kappa+d-2\varphi)}{a+c}g(X_x,u)g(Y_x,u)\right]u\right\}^h_{(x,u)}, \\[6pt]
	(\widetilde{\nabla}_{X^{t}}Y^{t})_{(x,u)}  =&  -\{g(Y_x,u)X_x\}^{t}_{(x,u)},
	\end{array}
	\end{equation*}
	for all $x \in M$, $(x,u) \in T_1 M$ and all arbitrary vector fields $X$, $Y \in \mathfrak{X}(M)$.
\end{Pro}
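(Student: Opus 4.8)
The plan is to obtain these formulas by specializing the general description of the Levi-Civita connection of an arbitrary pseudo-Riemannian $g$-natural metric on $T_1M$, established in \cite{Abb-Kow1}, to the two hypotheses at hand: the Kaluza-Klein type condition $b=0$, and constant sectional curvature $\kappa$. Alternatively, one could run the Koszul formula directly on $(T_1M,\widetilde G)$, using the explicit metric (\ref{exp-metr-0}) with $b=0$, the differentiation rules (\ref{Pr1})--(\ref{Pr4}), and the standard Lie brackets of horizontal and tangential lifts; this is longer, so I would take the first route.

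First I would record the simplifications produced by restricting to $T_1M$ and to $b=0$. On $T_1M$ one has $g_x(u,u)=1$, so every structural function $\alpha_i,\beta_i$ is frozen at the value $1$ and is replaced by the constants $a,b,c,d$, with $\varphi=a+c+d$ and, for $b=0$, $\alpha=a(a+c)$. For Kaluza-Klein type the tangential lift (\ref{tan}) reduces to $X^t_{(x,u)}=[X-g_x(u,X)u]^v_{(x,u)}$, and the unit normal to $T_1M$ in $(TM,G)$ is collinear with $u^v$ (there is no horizontal part, since for $\alpha_2=\beta_2=0$ the horizontal and vertical distributions are $G$-orthogonal). Consequently the tangential component of any vertical lift satisfies $t\{Z^v\}=\{Z-g(Z,u)u\}^t$, while horizontal lifts are already tangent to $T_1M$; this is the rule used throughout to push the ambient formulas down onto the hypersurface, consistently with Convention \ref{conv}.

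Next I would substitute the constant-curvature identity $R(X,Y)Z=\kappa\,[g(Y,Z)X-g(X,Z)Y]$ into the formulas of \cite{Abb-Kow1}. On $T_1M$ this replaces each curvature term there by a $g$-contraction; in particular $R(X,u)u=\kappa\,[X-g(X,u)u]$, $R(X,Y)u=\kappa\,[g(Y,u)X-g(X,u)Y]$ and $R(u,X)Y=\kappa\,[g(X,Y)u-g(Y,u)X]$, the remaining placements of $u$ being obtained from these by (anti)symmetry. Then, in each of the four cases $\widetilde\nabla_{X^h}Y^h$, $\widetilde\nabla_{X^h}Y^t$, $\widetilde\nabla_{X^t}Y^h$, $\widetilde\nabla_{X^t}Y^t$, I would collect the coefficient of the ``pure'' lift $\{\nabla_XY\}^h$ or $\{\nabla_XY\}^t$ and those of the correction terms built out of $X$, $Y$ and $u$, and apply the projection rule above, so that $u$-directed pieces get absorbed into the $\varphi$- and $(a+c)$-denominators. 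This reproduces exactly the four displayed expressions.

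The main obstacle is purely the bookkeeping: the formulas of \cite{Abb-Kow1} are long and carry several curvature terms with different arrangements of $u$, so one must be careful with signs (recalling $R(X,Y)=[\nabla_X,\nabla_Y]-\nabla_{[X,Y]}$) and with the repeated vertical-to-tangential conversion, which is precisely what generates the extra $u$-components and hence the rational coefficients in $a,c,d,\kappa,\varphi$. A useful internal check at the end is that every right-hand side indeed lies in $(T_1M)_{(x,u)}$, and that putting $a=1$, $c=d=0$ (and then $\kappa=0$) recovers the classical Levi-Civita connection of the Sasaki metric on the unit tangent bundle of a flat manifold.
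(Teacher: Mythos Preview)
Your proposal is correct and follows exactly the route taken in the paper: the formulas are stated there as ``deduced from the expressions of the Levi-Civita connection of $(T_1 M,\widetilde{G})$ given in \cite{Abb-Kow1}'', i.e.\ by specializing those general formulas to $b=0$ and to $R(X,Y)Z=\kappa[g(Y,Z)X-g(X,Z)Y]$. The simplifications you outline (freezing the structural functions, the tangential lift reducing to vertical projection when $b=0$, and the vertical-to-tangential conversion rule) are precisely the ingredients needed for the bookkeeping.
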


Using Proposition  \ref{cor-LC}, we can prove the following:

\begin{Lem}\label{Lie-tang}
	For all $\xi \in \mathfrak{X}(M)$, $(x,u) \in TM$ and $X, Y \in T_xM$, we have
	\begin{equation*}
	\begin{split}
	\left(\mathcal{L}_{\xi^{t}}\tilde{G}\right)_{(x,u)}(X^h,Y^h)=& d\{g(\xi_x,X)g(Y,u)+ g(\xi_x,Y)g(X,u)\\
	& - 2g(\xi_x,u)g(X,u)g(Y,u)\},  \\
	&  \\
	\left(\mathcal{L}_{\xi^{t}}\tilde{G}\right)_{(x,u)}(X^h,Y^t)= & a\{g(\nabla_X \xi,Y) -g(Y,u)g(\nabla_X \xi,u)\},\\
	& \\
	\left(\mathcal{L}_{\xi^{t}}\tilde{G}\right)_{(x,u)}(X^t,Y^t)=
	& -2ag(\xi_x,u)\{g(X,Y) -g(X,u)g(Y,u)\}.
	\end{split}
	\end{equation*}
\end{Lem}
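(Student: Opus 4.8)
The plan is to use, for the Levi-Civita connection $\widetilde\nabla$ of $(T_1M,\tilde G)$, the standard identity
\[
(\mathcal{L}_{\xi^{t}}\tilde G)(A,B)=\tilde G(\widetilde\nabla_{A}\xi^{t},B)+\tilde G(\widetilde\nabla_{B}\xi^{t},A),
\]
valid for all vector fields $A,B$ on $T_1M$ because $\widetilde\nabla$ is metric and torsion-free, and then to plug in the explicit expressions of $\widetilde\nabla$ supplied by Proposition \ref{cor-LC}. Since $\xi^{t}$ is precisely the tangential lift of $\xi\in\mathfrak X(M)$, and since the right-hand sides in Proposition \ref{cor-LC} depend on the second vector field only through its value and its first covariant derivative in the direction of the first one (the tangential lift $Y^{t}_{(x,u)}$ depends on $Y_x$ modulo $\mathbb R u$, the neighbourhood dependence being carried by $\nabla_X Y$), we may read off directly
\[
\widetilde\nabla_{X^{t}}\xi^{t}=-\{g(\xi,u)X\}^{t},\qquad
\widetilde\nabla_{X^{h}}\xi^{t}=H_X^{h}+\{\nabla_X\xi\}^{t},
\]
where $H_X\in M_x$ is the expression obtained by setting $Y=\xi$ in the bracket of the second line of Proposition \ref{cor-LC}.

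For the arguments $(X^{t},Y^{t})$ and $(X^{h},Y^{t})$ I expect only a short computation: by \eqref{exp-metr-0} with $b=0$ the metric $\tilde G$ annihilates any pairing of a horizontal lift against a tangential lift, so only the purely tangential contributions survive; evaluating them through $\tilde G(Z^{t},W^{t})=a\,[g(Z,W)-g(Z,u)g(W,u)]$ and symmetrizing in the two slots yields at once the second and third formulas of the lemma (the $-g(\cdot,u)g(\cdot,u)$ corrections coming from Convention \ref{conv}, i.e. from projecting $X$, $Y$ orthogonally to $u$). The substantial case is $(X^{h},Y^{h})$, where the horizontal component $H_X^{h}$ does contribute, via $\tilde G(H_X^{h},Y^{h})=(a+c)\,g(H_X,Y)+d\,g(H_X,u)\,g(Y,u)$, and $H_X$ carries the sectional curvature $\kappa$ of the base.

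The decisive simplification, after expanding this pairing, uses $g(u,u)=1$ on $T_1M$ together with $\varphi=a+c+d$, hence $d-\varphi=-(a+c)$; this collapses the coefficient of $g(X,u)g(\xi,u)g(Y,u)$ and leaves
\[
\tilde G(\widetilde\nabla_{X^{h}}\xi^{t},Y^{h})=\tfrac{d-a\kappa}{2}\,g(X,u)g(\xi,Y)+\tfrac{a\kappa+d}{2}\,g(X,\xi)g(u,Y)-d\,g(X,u)g(\xi,u)g(Y,u).
\]
Adding the same expression with $X$ and $Y$ interchanged, the two curvature-dependent coefficients $\tfrac{d-a\kappa}{2}$ and $\tfrac{a\kappa+d}{2}$ combine into $d$, so $\kappa$ disappears and one recovers $d\,[g(\xi,X)g(Y,u)+g(\xi,Y)g(X,u)]-2d\,g(\xi,u)g(X,u)g(Y,u)$, which is the first identity. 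The only real obstacle here is the bookkeeping in this final step: one has to keep track of all the $\kappa$-terms carefully in order to confirm that they cancel after symmetrization.
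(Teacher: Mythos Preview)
Your proposal is correct and follows exactly the route the paper indicates: the paper simply states that the lemma is proved ``using Proposition~\ref{cor-LC}'', and you carry out precisely that computation via the identity $(\mathcal{L}_{\xi^{t}}\tilde G)(A,B)=\tilde G(\widetilde\nabla_{A}\xi^{t},B)+\tilde G(\widetilde\nabla_{B}\xi^{t},A)$. Your handling of the $(X^h,Y^h)$ case---in particular the use of $\varphi=a+c+d$ to collapse the cubic term and the observation that the $\kappa$-contributions cancel upon symmetrization---is accurate, and the two remaining cases are indeed immediate once one uses $b=0$ and the formula $\tilde G(Z^{t},W^{t})=a\,[g(Z,W)-g(Z,u)g(W,u)]$.
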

\begin{Lem}\label{Lie-hor}
	For all $\xi \in \mathfrak{X}(M)$, $(x,u) \in TM$ and $X, Y \in T_xM$, we have
	\begin{equation*}
	\begin{split}
	\left(\mathcal{L}_{\xi^{h}}\tilde{G}\right)_{(x,u)}(X^h,Y^h)= & (a+c)\{g(\nabla_X \xi,Y) +g(\nabla_Y \xi,X)\}, \\
	& +d\{g(\nabla_X \xi,u)g(Y,u) +g(\nabla_Y \xi,u)g(X,u)\},  \\
	&  \\
	\left(\mathcal{L}_{\xi^{h}}\tilde{G}\right)_{(x,u)}(X^h,Y^t)= & a\kappa\{g(\xi_x,Y)g(X,u)-g(\xi_x,u)g(X,Y)\}, \\
	& \\
	\left(\mathcal{L}_{\xi^{h}}\tilde{G}\right)_{(x,u)}(X^t,Y^t)= & 0.
	\end{split}
	\end{equation*}
\end{Lem}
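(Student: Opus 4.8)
The plan is to exploit the Koszul-type expression of the Lie derivative of a metric along a vector field $Z$ in terms of its Levi-Civita connection: if $\widetilde{\nabla}$ denotes the Levi-Civita connection of $(T_1M,\widetilde{G})$, then
\[
(\mathcal{L}_Z\widetilde{G})(A,B)=\widetilde{G}(\widetilde{\nabla}_AZ,B)+\widetilde{G}(A,\widetilde{\nabla}_BZ)
\]
for all vector fields $A,B$ on $T_1M$. Since the left-hand side is tensorial in $A$ and $B$, we may extend $X$, $Y$, $\xi$ to local vector fields on $M$ and take $Z=\xi^h$ with $A,B\in\{X^h,X^t,Y^h,Y^t\}$; the final expressions will not depend on the chosen extensions. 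The computation then reduces to evaluating $\widetilde{\nabla}_{X^h}\xi^h$ and $\widetilde{\nabla}_{X^t}\xi^h$, which are read off directly from Proposition \ref{cor-LC} by specialising its second argument to $\xi$, and to pairing the results against $Y^h$ or $Y^t$ through the explicit form \eqref{exp-metr-0} of $\widetilde{G}$ with $b=0$, together with $g_x(u,u)=1$ on $T_1M$ and the identity $\widetilde{G}(A^t,B^t)=a[g_x(A,B)-g_x(A,u)g_x(B,u)]$ obtained from \eqref{tan} and \eqref{exp-metr-0}.

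For the $(X^h,Y^h)$-term, Proposition \ref{cor-LC} gives $\widetilde{\nabla}_{X^h}\xi^h=\{\nabla_X\xi\}^h$ plus a tangential lift; since $b=0$ the tangential part is $\widetilde{G}$-orthogonal to $Y^h$, so $\widetilde{G}(\widetilde{\nabla}_{X^h}\xi^h,Y^h)=(a+c)g(\nabla_X\xi,Y)+d\,g(\nabla_X\xi,u)g(Y,u)$, and symmetrising in $X,Y$ yields the first formula. For the $(X^t,Y^t)$-term, Proposition \ref{cor-LC} shows that $\widetilde{\nabla}_{X^t}\xi^h$ is a pure horizontal lift, hence $\widetilde{G}$-orthogonal to $Y^t$ when $b=0$; both summands then vanish and the third (identically zero) formula follows at once.

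The substance of the argument is the $(X^h,Y^t)$-term. Here one keeps the tangential part of $\widetilde{\nabla}_{X^h}\xi^h$ and pairs it with $Y^t$ via $\widetilde{G}(A^t,B^t)=a[g(A,B)-g(A,u)g(B,u)]$, and separately pairs $X^h$ with the horizontal vector $\widetilde{\nabla}_{Y^t}\xi^h$ via $\widetilde{G}(A^h,B^h)=(a+c)g(A,B)+d\,g(A,u)g(B,u)$. This produces a sum of terms proportional to $g(\xi,u)g(X,Y)$, to $g(X,u)g(\xi,Y)$, and to the cubic $g(X,u)g(Y,u)g(\xi,u)$, weighted by rational functions of $a$, $a+c$, $d$ and $\varphi$ inherited from Proposition \ref{cor-LC}. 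The main obstacle --- and essentially the only non-routine step --- is the algebraic verification that, after putting everything over the common denominator $\varphi(a+c)$ and using $\varphi=a+c+d$ (equivalently $d-\varphi=-(a+c)$), the cubic terms cancel identically while the coefficients of $g(\xi,u)g(X,Y)$ and $g(X,u)g(\xi,Y)$ collapse to $-a\kappa$ and $a\kappa$ respectively. This is precisely the second formula, and once this identity is checked the lemma is proved.
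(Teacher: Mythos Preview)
Your proposal is correct and follows essentially the same route as the paper, which simply indicates that the lemma is obtained ``using Proposition~\ref{cor-LC}'' (i.e., substituting the Levi-Civita connection formulas into $(\mathcal{L}_{\xi^h}\widetilde{G})(A,B)=\widetilde{G}(\widetilde{\nabla}_A\xi^h,B)+\widetilde{G}(A,\widetilde{\nabla}_B\xi^h)$ and pairing via \eqref{exp-metr-0} with $b=0$). One small simplification worth noting: since the paper works under Convention~\ref{conv}, the vector $Y$ appearing in $Y^t$ may be assumed orthogonal to $u$, so the cubic terms $g(X,u)g(Y,u)g(\xi,u)$ you carefully track in the $(X^h,Y^t)$-computation vanish from the outset and no cancellation needs to be verified; your more general check is nonetheless valid.
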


Next, the Ricci tensor $\widetilde{\textup{Ric}}$ of $(T_1 M,\widetilde{G})$ is given by the following.

\begin{Pro}\label{cor-ric}{\em\cite{Abb-Kow3}}
	If $(M,g)$ is a space of constant sectional curvature $\kappa$ and $\widetilde{G}$ is a pseudo-Riemannian $g$-natural metric of Kaluza-Klein type on $T_1M$, then
	the Ricci tensor $\widetilde{\textup{Ric}}$ of $(T_1 M,\widetilde{G})$
	is given by: \arraycolsep2pt
	\begin{eqnarray}
	\qquad \widetilde{\textup{Ric}}_{(x,u)} (X^h,Y^h) &= & \frac{1}{2a\varphi}
	[-a^2\kappa^2 +2(n-1)a\varphi \kappa +d(d-2\varphi)]\,
	g(X,Y) \label{ric1k}\\
	&  & +\frac{1}{2a(a+c)\varphi}[-a^2((n-2)\varphi +d)\kappa^2
	+d(2n(a+c)\varphi  \nonumber\\
	&  & +(n-1)d\varphi -d(a+c))]\, g(X,u)g(Y,u),\nonumber \\[4pt]
	\qquad \widetilde{\textup{Ric}}_{(x,u)} (X^h,Y^t)& = &0 ,\label{ric2k}\\[4pt]
	\qquad  \widetilde{\textup{Ric}}_{(x,u)} (X^{t},Y^{t}) &= &\frac{1}{2(a+c)\varphi}
	[a^2\kappa^2 +2(n-2)(a+c)\varphi -d^2]\, g(X,Y),\label{ric3k}
	\end{eqnarray}
	for all $x \in M$, $(x,u) \in T_1 M$ and all arbitrary vectors $X$, $Y \in M_x$ satisfying Convention \ref{conv}.
\end{Pro}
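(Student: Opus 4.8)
I would derive Proposition \ref{cor-ric} by first computing the full Riemann curvature tensor $\widetilde R$ of $(T_1M,\widetilde G)$ from the Levi-Civita connection of Proposition \ref{cor-LC}, and then contracting it over an adapted frame. Fix $(x,u)\in T_1M$ and an orthonormal basis $\{e_1=u,e_2,\dots,e_n\}$ of $M_x$, and extend each $e_i$ to a local vector field on $M$ by parallel transport along the geodesics issuing from $x$, so that $(\nabla e_i)_x=0$. From \eqref{exp-metr-0} with $b=0$, together with \eqref{tan} (which gives $e_i^t=e_i^v$ for $i\ge 2$), the vectors $u^h/\sqrt{|\varphi|}$, $e_i^h/\sqrt{|a+c|}$ and $e_i^t/\sqrt{|a|}$ ($2\le i\le n$) form a pseudo-orthonormal basis of $(T_1M)_{(x,u)}$, with $\widetilde G(u^h,u^h)=\varphi$, $\widetilde G(e_i^h,e_i^h)=a+c$ and $\widetilde G(e_i^t,e_i^t)=a$. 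Hence, for $A,B\in(T_1M)_{(x,u)}$,
$$
\widetilde{\textup{Ric}}_{(x,u)}(A,B)=\frac{1}{\varphi}\,\widetilde G(\widetilde R(u^h,A)B,u^h)+\frac{1}{a+c}\sum_{i=2}^n\widetilde G(\widetilde R(e_i^h,A)B,e_i^h)+\frac{1}{a}\sum_{i=2}^n\widetilde G(\widetilde R(e_i^t,A)B,e_i^t).
$$

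Next I would compute the curvature operators $\widetilde R(P,Q)W$ for $P,Q,W$ ranging over horizontal and tangential lifts, evaluated at $(x,u)$ using parallel extensions at $x$ as above (legitimate, since $\widetilde R$ is tensorial). Writing $\widetilde R(P,Q)W=\widetilde\nabla_P\widetilde\nabla_Q W-\widetilde\nabla_Q\widetilde\nabla_P W-\widetilde\nabla_{[P,Q]}W$, with the brackets obtained from Proposition \ref{cor-LC} via torsion-freeness, the computation reduces to iterating the connection formulas and differentiating the coefficient functions $g(\,\cdot\,,u)$ occurring in them: by \eqref{Pr3} such a derivative along a horizontal direction is $g(\nabla_X\,\cdot\,,u)$, which vanishes at $x$ by the parallel normalization, while by \eqref{Pr4} along a tangential direction it equals $g(X,\,\cdot\,)-g(X,u)g(\,\cdot\,,u)$. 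At $x$ all first covariant derivatives of the chosen fields vanish, and the surviving second covariant derivatives assemble, through $R(P,Q)W|_x=(\nabla_P\nabla_Q W-\nabla_Q\nabla_P W)|_x$, into the base curvature, which by hypothesis equals $\kappa\,(g(Q,W)P-g(P,W)Q)$. Collecting terms gives explicit formulas for $\widetilde R(X^h,Y^h)Z^h$, $\widetilde R(X^h,Y^h)Z^t$, $\widetilde R(X^h,Y^t)Z^h$, $\widetilde R(X^h,Y^t)Z^t$, $\widetilde R(X^t,Y^t)Z^h$ and $\widetilde R(X^t,Y^t)Z^t$, and hence, by the symmetries of $\widetilde R$, for all combinations.

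Substituting these into the trace formula and carrying out the sums over $i$ — treating the index $i=1$ separately, since $e_1^t=u^t=0$ and $u^h$ is normalized by $\varphi$ while the $e_i^h$ ($i\ge 2$) are normalized by $a+c$ — produces the announced formulas \eqref{ric1k} and \eqref{ric3k}. The vanishing of the mixed component \eqref{ric2k} can be read off directly from the trace, or seen more structurally: the geodesic flip $\sigma(x,u)=(x,-u)$ is an isometry of $(T_1M,\widetilde G)$ (since, with $b=0$, $\widetilde G$ depends on $u$ only through $g(\,\cdot\,,u)g(\,\cdot\,,u)$) whose differential fixes horizontal lifts and reverses tangential lifts; invariance of $\widetilde{\textup{Ric}}$ under $\sigma$ forces $\widetilde{\textup{Ric}}(X^h,Y^t)$ to be odd in $u$, and since it must be a multiple of $g(X,Y)$ for $Y\perp u$, it vanishes identically.

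The entire content of the argument lies in the bookkeeping of the second step; no genuine obstruction arises beyond its length. The two points requiring care are that the coefficient functions in Proposition \ref{cor-LC} are not tensorial in $X,Y$, so that the parallel-frame normalization at $x$ is exactly what keeps the many cross terms manageable, and that the $u$-derivatives produced via \eqref{Pr4} in the tangential directions must not be overlooked, since they contribute precisely the $d^2$, $a^2\kappa^2$ and $a^2\kappa$ terms appearing in \eqref{ric1k} and \eqref{ric3k}.
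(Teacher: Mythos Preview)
The paper does not prove Proposition~\ref{cor-ric}; it is quoted from \cite{Abb-Kow3} with no argument supplied. Your outline---compute $\widetilde R$ from the connection of Proposition~\ref{cor-LC} using parallel-at-$x$ extensions, then trace over the adapted pseudo-orthonormal frame $\{u^h/\sqrt{|\varphi|},\,e_i^h/\sqrt{|a+c|},\,e_i^t/\sqrt{|a|}:2\le i\le n\}$---is the standard direct computation and is correct; the geodesic-flip argument you give for \eqref{ric2k} is a clean structural shortcut.
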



\section{Natural Ricci Solitons on tangent bundles}


\begin{Th}
Let $(M,g)$ be a Riemannian manifold of dimension $n \geq 3$  and  $G$ be a pseudo-Riemannian $g$-natural metric on $TM$ whose functions $\alpha_i, \beta_i$ $i=1,2,3$ satisfy $\alpha_1(0)(\alpha_1+\alpha_3)(0)-2\alpha_{2}^{2}(0)\neq 0$.  If $(TM,G,Z,\bar{\lambda})$ is a Ricci Soliton, then  $(M,g,Z_0,\lambda)$ is a Ricci soliton with
\begin{itemize}
	\item $Z_0(x)=\frac{\alpha(0)}{[\alpha_1(\alpha_1+\alpha_3)-2\alpha_{2}^{2}](0)} \{(\alpha_1+\alpha_3)(0)d\pi(Z(x,0))+\alpha_2(0)K(Z(x,0))\}$;
	\item $\lambda=\frac{(\alpha_1+\alpha_3)[\bar{\lambda}\alpha+\beta_1+\beta_3+n(\alpha_1+\alpha_3)']}{\alpha_1(\alpha_1+\alpha_3)-2\alpha_{2}^{2}}(0)$,
\end{itemize}
and $K$ is the connection map.
\end{Th}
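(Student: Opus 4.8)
The plan is to pull the soliton equation $\widetilde{\textup{Ric}}+\frac12\mathcal{L}_Z G=\bar\lambda G$ back along the zero section $\iota_0\colon M\to TM$, $\iota_0(x)=(x,0)$. Since $(\iota_0)_*X=X^h_{(x,0)}$, evaluating the equation at a point $(x,0)$ on horizontal pairs $X^h_{(x,0)},Y^h_{(x,0)}$ — which exhaust $T_{(x,0)}\iota_0(M)$ as $X,Y$ vary in $M_x$ — and letting $x$ vary will produce an identity of $(0,2)$-tensors on $M$, the asserted soliton equation. Two facts make this effective: first, by the explicit form of $G$ with $u=0$ (the $\beta_i$-terms carry a factor $g(\cdot,u)$ and vanish), $\iota_0^*G=(\alpha_1+\alpha_3)(0)\,g$; second, $\iota_0(M)$ is \emph{totally geodesic} in $(TM,G)$, because specialising the Levi-Civita connection of a $g$-natural metric to $u=0$ gives $(\nabla^G_{X^h}Y^h)_{(x,0)}=(\nabla_XY)^h_{(x,0)}$ (the base-curvature corrections all carry a factor $u$), which is tangent to $\iota_0(M)$. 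I will assume $(\alpha_1+\alpha_3)(0)\neq0$, so that $\iota_0(M)$ is nondegenerate — the remaining case being treated by the same computations performed directly from the connection formula.

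\textbf{The Ricci term.} Here I would insert the known expression of $\widetilde{\textup{Ric}}$ for $g$-natural metrics on $TM$ (cf.\ \cite{Abb-Sar4}) at $u=0$, where the curvature-squared contributions of the base drop out:
\[
\widetilde{\textup{Ric}}_{(x,0)}(X^h,Y^h)=\frac{[\alpha_1(\alpha_1+\alpha_3)-2\alpha_2^2](0)}{\alpha(0)}\,\textup{Ric}_x(X,Y)-\frac{(\alpha_1+\alpha_3)(0)}{\alpha(0)}\bigl[(\beta_1+\beta_3)(0)+n(\alpha_1+\alpha_3)'(0)\bigr]g_x(X,Y).
\]
The coefficient of $\textup{Ric}_x$ can also be obtained intrinsically: by Gauss' equation the contribution of the totally geodesic $\iota_0(M)$ (whose induced metric $(\alpha_1+\alpha_3)(0)g$ has Ricci $\textup{Ric}$) equals $\textup{Ric}_x$, while Codazzi's equation kills the mixed curvature terms in the trace over the $G$-orthogonal complement of $H_{(x,0)}$ — spanned by the vectors $Z^v-\tfrac{\alpha_2(0)}{(\alpha_1+\alpha_3)(0)}Z^h$ — leaving there $-\tfrac{\alpha_2(0)^2}{\alpha(0)}\textup{Ric}_x$, so the total coefficient is $1-\tfrac{\alpha_2(0)^2}{\alpha(0)}=\tfrac{[\alpha_1(\alpha_1+\alpha_3)-2\alpha_2^2](0)}{\alpha(0)}$. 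The hypothesis $[\alpha_1(\alpha_1+\alpha_3)-2\alpha_2^2](0)\neq0$ is precisely what will let us divide by this coefficient below; note $\alpha(0)=\phi(0)\neq0$ automatically, by nondegeneracy of $G$.

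\textbf{The Lie-derivative term.} Along $\iota_0(M)$ one has $Z_{(x,0)}=(Z_1)^h_{(x,0)}+(Z_2)^v_{(x,0)}$ with $Z_1=d\pi(Z(x,0))$ and $Z_2=K(Z(x,0))$, both smooth in $x$. From the values of $G_{(x,0)}$, the $G$-orthogonal projection of $Z_{(x,0)}$ onto $T_{(x,0)}\iota_0(M)=H_{(x,0)}$ is $W^h_{(x,0)}$ with $W=Z_1+\dfrac{\alpha_2(0)}{(\alpha_1+\alpha_3)(0)}Z_2$ (the $\alpha_2(0)$-weighting of $Z_2$ reflecting that $H$ and $V$ are not $G$-orthogonal at $u=0$). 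Since $\iota_0(M)$ is totally geodesic, the normal part of $Z$ contributes nothing to the pullback of $\mathcal{L}_ZG$ (the second fundamental form vanishing), and the standard identity $\iota_0^*(\mathcal{L}_ZG)=\mathcal{L}_W(\iota_0^*G)$ holds; hence
\[
\iota_0^*(\mathcal{L}_ZG)=(\alpha_1+\alpha_3)(0)\,\mathcal{L}_Wg .
\]
(Equivalently, using $\nabla^GG=0$ one writes $(\mathcal{L}_ZG)_{(x,0)}(X^h,Y^h)=G_{(x,0)}(\nabla^G_{X^h}Z,Y^h)+G_{(x,0)}(X^h,\nabla^G_{Y^h}Z)$ and differentiates $Z=Z_1^h+Z_2^v$ along $\iota_0(M)$ with the connection formula at $u=0$, getting the same answer with no nondegeneracy hypothesis.)

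\textbf{Conclusion.} Substituting the three ingredients into $\iota_0^*\widetilde{\textup{Ric}}+\tfrac12\iota_0^*(\mathcal{L}_ZG)=\bar\lambda\,\iota_0^*G$ yields
\[
\frac{[\alpha_1(\alpha_1+\alpha_3)-2\alpha_2^2](0)}{\alpha(0)}\,\textup{Ric}+\frac{(\alpha_1+\alpha_3)(0)}{2}\,\mathcal{L}_Wg=\Bigl(\bar\lambda(\alpha_1+\alpha_3)(0)+\frac{(\alpha_1+\alpha_3)(0)}{\alpha(0)}\bigl[(\beta_1+\beta_3)(0)+n(\alpha_1+\alpha_3)'(0)\bigr]\Bigr)g,
\]
and multiplying through by $\alpha(0)/[\alpha_1(\alpha_1+\alpha_3)-2\alpha_2^2](0)$ gives $\textup{Ric}+\tfrac12\mathcal{L}_{Z_0}g=\lambda g$ with $Z_0=\dfrac{\alpha(0)(\alpha_1+\alpha_3)(0)}{[\alpha_1(\alpha_1+\alpha_3)-2\alpha_2^2](0)}\,W$; expanding $W$ reproduces the stated $Z_0=\tfrac{\alpha(0)}{[\alpha_1(\alpha_1+\alpha_3)-2\alpha_2^2](0)}\{(\alpha_1+\alpha_3)(0)\,d\pi(Z(x,0))+\alpha_2(0)\,K(Z(x,0))\}$, and $\lambda$ is the stated constant. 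Since $Z_0$ is a smooth vector field and $\lambda$ a real constant, $(M,g,Z_0,\lambda)$ is a Ricci soliton. The main obstacle is the evaluation of $\widetilde{\textup{Ric}}$ (and, for the alternative route, of $\nabla^G$) on the zero section — that is, correctly tracking the constants $\alpha_i(0),\beta_i(0),\alpha_i'(0)$ through the curvature/connection computations, in particular the origin of the derivative term $n(\alpha_1+\alpha_3)'(0)$; once these are in hand the rest is algebra.
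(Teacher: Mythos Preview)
Your proposal is correct and follows essentially the same route as the paper: both restrict the Ricci soliton equation on $(TM,G)$ to the zero section, evaluate it on horizontal pairs $X^h,Y^h$ at $(x,0)$, invoke the curvature formulas of \cite{Abb-Sar4} to obtain the stated expression for $\widetilde{\textup{Ric}}_{(x,0)}(X^h,Y^h)$, and then identify the Lie-derivative term with $(\mathcal{L}_W g)_x(X,Y)$ for the appropriate $W$. The only genuine difference is organizational: the paper computes $(\mathcal{L}_Z G)_{(x,0)}(X^h,Y^h)$ directly in induced local coordinates via the Levi-Civita connection formulas of \cite{Abb-Sar4} on the zero section, whereas you package the same computation as the abstract identity $\iota_0^*(\mathcal{L}_Z G)=\mathcal{L}_{Z^\top}(\iota_0^*G)$ valid for totally geodesic submanifolds. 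Your framing is cleaner and explains \emph{why} the normal component of $Z$ drops out; the paper's explicit calculation has the advantage of not presupposing the totally-geodesic fact and of handling the degenerate case $(\alpha_1+\alpha_3)(0)=0$ (where $\iota_0(M)$ is a null submanifold and the orthogonal projection is unavailable) by the same direct method rather than deferring it.
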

\begin{Proof}
	 Fix $x\in M$ and let $(E_1,...,E_n)$ be an orthonormal basis of $(M_x,g_x)$. We will distinguish two cases:

\textbf{Case 1:} $(\alpha_1+\alpha_3)(0) \neq 0$. If we put
\begin{equation*}
  \begin{split}
    F_i= & \frac{1}{\sqrt{|(\alpha_1+\alpha_3)(0)|}}E_{i}^{h}(x,0), \\
    F_{i+n}=  & \frac{1}{\sqrt{|\alpha(0)(\alpha_1+\alpha_3)(0)|}}\left[\alpha_2(0)E_{i}^{h}(x,0)-(\alpha_1+\alpha_3)(0)E_{i}^{v}(x,0)\right],
  \end{split}
\end{equation*}
$i=1,...,n$, then we obtain an orthonormal basis $\{F_1,...,F_{2n}\}$  of the tangent space $((TM)_{(x,0)},G_{(x,0)})$. If we put $\varepsilon_I:=G(F_I,F_I)$, $I=1,...,2n$, then the Ricci  curvature of $(TM,G)$ at $(x,0)$ is then given by
$$\overline{\textup{Ric}}_{(x,0)}(V,W)=\sum_{I=1}^{2n}\varepsilon_IG_{(x,0)}(\bar{R}(V,F_I)F_I,W),$$
for every $V,W \in (TM)_{(x,0)}$.

Taking $V=X^h$ and $W=Y^h$, for $X,Y \in M_x$, then simple calculation gives
	\begin{equation}\label{Ric1}
	\begin{array}{lll}
	\overline{\textup{Ric}}_{(x,0)}(X^h,Y^h) &= &\frac{1}{\alpha(0)}\sum_{i=1}^{n}\left\{\alpha_1(0)G_{(x,0)}(\bar{R}(X^h,E_{i}^{h})E_{i}^{h},Y^h)\right.\\
&&\quad+(\alpha_1+\alpha_3)(0)G_{(x,0)}(\bar{R}(X^h,E_{i}^{v})E_{i}^{v},Y^h)
	\\
	& &\quad-\alpha_2(0)[G_{(x,0)}(\bar{R}(X^h,E_{i}^{h})E_{i}^{v},Y^h)\\
&&\left.\qquad\qquad+G_{(x,0)}(\bar{R}(X^h,E_{i}^{v})E_{i}^{h},Y^h)]\right\}. \\
	\end{array}
		\end{equation}
	But, restricting the Riemannian curvature $\bar{R}$ of $(TM,G)$ to the zero section (cf. \cite[Proposition 3.1]{Abb-Sar4}), we obtain
	
	$\begin{array}{lll}
	G_{(x,0)}(\bar{R}(X^h,E_{i}^{h})E_{i}^{h},Y^h) &= &(\alpha_1+\alpha_3)(0)g(R(X,E_i)E_i,Y),
	\\
	G_{(x,0)}(\bar{R}(X^h,E_{i}^{h})E_{i}^{v},Y^h)&=& \alpha_2(0)g(R(X,E_i)E_i,Y), \\
	G_{(x,0)}(\bar{R}(X^h,E_{i}^{v})E_{i}^{h},Y^h)&=&  \alpha_2(0)g(R(X,E_i)E_i,Y),\\
	G_{(x,0)}(\bar{R}(X^h,E_{i}^{v})E_{i}^{v},Y^h)&=&-(\alpha_1+\alpha_3)'(0)g(X,Y)\\
&&-(\beta_1+\beta_3)(0)g(X,E_i)g(Y,E_i). \\
	\end{array}$

Replacing from the last identities into \eqref{Ric1}, we deduce that
	
	$\begin{array}{lll}
	\overline{\textup{Ric}}_{(x,0)}(X^h,Y^h) &= &\frac{1}{\alpha(0)}\sum_{i=1}^{n}\{\left[\alpha_1(\alpha_1+\alpha_3)-2\alpha_{2}^{2}\right](0)g(R(X,E_i)E_{i},Y)
	\\
	& &-(\alpha_1+\alpha_3(0)[(\alpha_1+\alpha_3)'(0)g(X,Y)\\
&&+(\beta_1+\beta_3)(0)g(X,E_i)g(Y,E_i)]\}.
	\end{array}$ \\
	Since $(E_1,...,E_n)$ is an orthonormal basis of $ (M_x,g_x) $, we deduce that
	\begin{equation}\label{Ricci1}
	\begin{split}
   \overline{\textup{Ric}}_{(x,0)}(X^h,Y^h)= & \frac{1}{\alpha(0)}[((\alpha_1+\alpha_3)-2\alpha_{2}^{2})(0)\textup{Ric}(X,Y)\\
     & -(\alpha_1+\alpha_3)(0)((\beta_1+\beta_3)(0)+n(\alpha_1+\alpha_3)'(0))g(X,Y)].
 \end{split}
	\end{equation}
	Now, if $ (TM, G, Z, \bar {\lambda}) $ is a Ricci Soliton, then we have in particular:
	\begin{equation}\label{Riccie}
	\bar{R}ic_{(x,0)}(X^h,Y^h)+\frac{1}{2}(\mathcal{L}_ZG)_{(x,0)}(X^h,Y^h)=\bar{\lambda} G_{(x,0)}(X^h,Y^h).
	\end{equation}
	Expressing $Z$, in an induced local system $(p^{-1}(U);x^i,u^i,i=1,...,n)$ of $TM$, as $Z=\sum_{l=1}^{n} \left[ A^l \frac{\partial}{\partial x^l} +B^l \frac{\partial}{\partial u^l}\right]$, and restricting the Levi-Civita connection $\bar{\nabla}$ of $(TM,G)$ to the zero section (cf. \cite[Proposition 1.5]{Abb-Sar4}), we obtain
	\begin{equation}\label{lie11}
	\begin{array}{ll}
	&(\mathcal{L}_ZG)_{(x,0)}(X^h,Y^h)=G_{(x,0)}(X^h,\bar{\nabla}_{Y^h}Z)+G_{(x,0)}(Y^h,\bar{\nabla}_{X^h}Z)
	\\
	 &\qquad=(\alpha_1+\alpha_3)(0)[g(X,Y(A^l(x,0))\frac{\partial}{\partial x^l}\arrowvert_x)+g(Y,X(A^l(x,0))\frac{\partial}{\partial x^l}\arrowvert_x)\\
	& \qquad\quad+A^l(x,0)(g(X,\nabla_Y\frac{\partial}{\partial x^l})+g(Y,\nabla_X\frac{\partial}{\partial x^l}))] \\
	&\qquad\quad+\alpha_2(0)[g(X,Y(B^l(x,0))\frac{\partial}{\partial x^l}\arrowvert_x)+g(Y,X(B^l(x,0))\frac{\partial}{\partial x^l}\arrowvert_x) \\
	& \qquad\quad+B^l(x,0)(g(X,\nabla_Y\frac{\partial}{\partial x^l})+g(Y,\nabla_X\frac{\partial}{\partial x^l}))].
	\end{array}
		\end{equation}
Setting, for every $ x  \in U$,
$$W(x)=[(\alpha_1+\alpha_3)(0)A^l(x,0)+\alpha_2(0)B^l(x,0)]\left.\frac{\partial}{\partial x^l}\right|_{x},$$
then $W$ is a vector field on $U,$ and \eqref{lie11} becomes
\begin{equation}\label{Lied}
	(\mathcal{L}_ZG)_{(x,0)}(X^h,Y^h)=(\mathcal{L}_Wg)_x(X,Y).	
\end{equation}
Substituting from \eqref{Ricci1} and \eqref{Lied} into \eqref{Riccie}, we deduce, under $(\alpha_1(\alpha_1+\alpha_3)-2\alpha_{2}^{2})(0)\neq 0 $, that $$\textup{Ric}(X,Y)+\frac{1}{2}\mathcal{L}_{Z_0}g(X,Y)=\lambda g(X,Y),$$
for every $X,Y \in M_x$, where $Z_0=\frac{\alpha(0)}{(\alpha_1(\alpha_1+\alpha_3)-2\alpha_{2}^{2})(0)}W$ and $$\lambda=\frac{(\alpha_1+\alpha_3)[\bar{\lambda}\alpha+\beta_1+\beta_3+n(\alpha_1+\alpha_3)']}{\alpha_1(\alpha_1+\alpha_3)-2\alpha_{2}^{2}}(0).$$

\textbf{Case 2:} $(\alpha_1 +\alpha_3)(0)=0$. In this case $\alpha_2(0) \neq 0$. We put
\begin{equation*}
  F_i= \frac{1}{\sqrt{2|\alpha_2(0)|}}[E_i^h +E_i^v], \qquad \textup{and} \qquad F_{n+i}= \frac{1}{\sqrt{2|\alpha_2(0)|}}[E_i^h -E_i^v],
\end{equation*}
$i=1,...,n$. As in the first case, $\{F_I; I=1,...,2n\}$ is an orthonormal basis of $((TM)_{(x,0)},G_{(x,0)})$, and in a similar way we obtain $$\overline{\textup{Ric}}_{(x,0)}(X^h,Y^h)= 2\textup{Ric}(X,Y),$$
and then $\textup{Ric}(X,Y)+\frac{1}{2}\mathcal{L}_{Z_0^\prime}g(X,Y)=0$, for any $x \in M$ and $X$, $Y \in M_x$, where $Z_0^\prime(x):=-\alpha_2(0)B^l(x,0)\left.\frac{\partial}{\partial x^l}\right|_{x}$, i.e. $(TM,G,Z_0^\prime,0)$ is a Ricci Soliton. Remark that, for $(\alpha_1+\alpha_3)(0)=0$, we obtain $\bar{\lambda}=0$ and $Z_0=Z_0^\prime$. This completes our proof.
\end{Proof}

\begin{Cor}
	Let $ (M, g) $ be a Riemannian manifold of dimension $ n \geq 3 $ and $ G $ be a pseudo-Riemannian $ g $-natural metric on $ TM $ whose functions  $\alpha_i, \beta_i$ $i=1,2,3$ satisfy $\alpha_1(0)(\alpha_1+\alpha_3)(0)-2\alpha_{2}^{2}(0)\neq 0.$ If $(TM,G,\bar{f},\bar{\lambda})$ is a gradient type Ricci soliton, then  $(M,g,f,\lambda)$ is a gradient type Ricci soliton with
		\begin{itemize}
			\item $f(x)=\frac{\alpha(0)}{\alpha_1(0)(\alpha_1+\alpha_3)(0)-2\alpha_{2}^{2}(0)}\bar{f}(x,0) $, for all $x \in M$;
			\item $\lambda=\frac{(\alpha_1+\alpha_3)[\bar{\lambda}\alpha+\beta_1+\beta_3+n(\alpha_1+\alpha_3)']}{\alpha_1(\alpha_1+\alpha_3)-2\alpha_{2}^{2}}(0)$.
		\end{itemize}
\end{Cor}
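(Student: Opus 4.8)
The plan is to read the corollary off the preceding Theorem. Recall that a gradient Ricci soliton is the same thing as a Ricci soliton whose potential vector field is a metric gradient, since for any smooth function $\psi$ one has $\tfrac12\mathcal{L}_{\operatorname{grad}\psi}g=\nabla^2\psi$ by symmetry of the Hessian. Hence, if $(TM,G,\bar f,\bar\lambda)$ is a gradient Ricci soliton, then $(TM,G,Z,\bar\lambda)$ is a Ricci soliton for the vector field $Z$ on $TM$ characterized by $G(Z,\cdot)=d\bar f$, and the Theorem applies: $(M,g,Z_0,\lambda)$ is a Ricci soliton with $\lambda$ as in the statement and
\[
Z_0(x)=\frac{\alpha(0)}{[\alpha_1(\alpha_1+\alpha_3)-2\alpha_2^2](0)}\bigl\{(\alpha_1+\alpha_3)(0)\,d\pi(Z(x,0))+\alpha_2(0)\,K(Z(x,0))\bigr\}.
\]
It then remains only to show that this $Z_0$ is itself a \emph{gradient}, namely $Z_0=\operatorname{grad}^{g}f$ for $f:=c\,\bar f(\cdot,0)$, where $c:=\alpha(0)/[\alpha_1(\alpha_1+\alpha_3)-2\alpha_2^2](0)$.

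To that end I would write $Z(x,0)=X^{h}_{(x,0)}+Y^{v}_{(x,0)}$, so that $d\pi(Z(x,0))=X$ and $K(Z(x,0))=Y$ by the properties $p_*X^{h}=X$, $p_*Y^{v}=0$, $K(X^{h})=0$, $K(Y^{v})=Y$. The crucial point is that the zero section $s\colon M\to TM$, $s(x)=(x,0)$, is horizontal: since the zero field along any curve of $M$ is parallel, $s_*E=E^{h}_{(x,0)}$ for every $E\in M_x$ (equivalently, this is \eqref{lift2} at $u=0$). Consequently, using $\bar f\circ s=c^{-1}f$, for every $E\in M_x$
\[
E^{h}_{(x,0)}(\bar f)=(s_*E)(\bar f)=E(\bar f\circ s)(x)=c^{-1}\,g_x\bigl(\operatorname{grad}^{g}f,E\bigr),
\]
while, evaluating the defining relation $G(Z,\cdot)=d\bar f$ at $(x,0)$ on $E^{h}$ and invoking the expression \eqref{exp-$g$-nat} of $G$ at $u=0$ (where every $\beta$-term drops out since $g_x(\cdot,0)=0$),
\[
E^{h}_{(x,0)}(\bar f)=G_{(x,0)}\bigl(X^{h}+Y^{v},E^{h}\bigr)=(\alpha_1+\alpha_3)(0)\,g_x(X,E)+\alpha_2(0)\,g_x(Y,E).
\]
Comparing the two displays for all $E\in M_x$ and using nondegeneracy of $g$ yields $(\alpha_1+\alpha_3)(0)X+\alpha_2(0)Y=c^{-1}\operatorname{grad}^{g}_xf$, whence $Z_0(x)=c\{(\alpha_1+\alpha_3)(0)X+\alpha_2(0)Y\}=\operatorname{grad}^{g}_xf$. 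Together with $\textup{Ric}+\tfrac12\mathcal{L}_{Z_0}g=\lambda g$ from the Theorem, i.e. $\textup{Ric}+\nabla^2f=\lambda g$, this shows $(M,g,f,\lambda)$ is a gradient Ricci soliton, which is the claim.

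I expect the one subtle step to be the identity $E^{h}_{(x,0)}(\bar f)=E(\bar f\circ s)$ — that differentiating $\bar f$ along a horizontal direction at a point of the zero section is the same as differentiating the restriction $\bar f(\cdot,0)$ on $M$ — which rests on the horizontality of the zero section and should be justified explicitly (via parallel transport, or directly from \eqref{lift2} at $u=0$). Everything after that is bookkeeping; in particular, unlike the proof of the Theorem, this argument requires no separate analysis of the cases $(\alpha_1+\alpha_3)(0)\neq0$ and $(\alpha_1+\alpha_3)(0)=0$, since only the case-uniform conclusion of the Theorem is used.
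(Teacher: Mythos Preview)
Your proof is correct and follows the route the paper implicitly intends: the Corollary is stated in the paper without proof, as an immediate consequence of the preceding Theorem, and your argument supplies precisely the missing verification that the induced potential $Z_0$ is the $g$-gradient of $f=c\,\bar f(\cdot,0)$. The key identity $s_*E=E^h_{(x,0)}$ (horizontality of the zero section) and the computation $G_{(x,0)}(X^h+Y^v,E^h)=(\alpha_1+\alpha_3)(0)g(X,E)+\alpha_2(0)g(Y,E)$ are both correct, and your observation that no case split on $(\alpha_1+\alpha_3)(0)$ is needed here is well taken.
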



\section{$g$-natural metrics of the form $ag^s+bg^h+cg^v$ }


\begin{Th}\label{Solitons-comb1}
	Let $ (M, g) $ be a Riemannian manifold of dimension $n \geq 2$,  $G=ag^s+bg^h+cg^v$ such that $a \neq 0$, $a+c \neq 0$ and $a(a+c) -b^2 \neq 0$, $Z$ be a vector field on $TM$  and $\bar{\lambda}\in \mathbb{R}$. Then $(TM,G,Z,\bar{\lambda})$ is a Ricci Soliton if and only if
	\begin{enumerate}
		\item $(M,g)$ flat;
		\item $Z$ is a homothetic vector field on $(TM,G)$, with homothety factor $\bar{\lambda}$.
	\end{enumerate}
\end{Th}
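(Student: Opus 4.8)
The plan is to prove the two implications separately; all the content lies in the forward direction.

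\textbf{Sufficiency.} Suppose $(M,g)$ is flat and $Z$ is homothetic on $(TM,G)$ with homothety factor $\bar{\lambda}$, i.e. $\mathcal{L}_Z G=2\bar{\lambda}G$. I first observe that $(TM,G)$ is then flat: around each point of $M$ pick affine coordinates $(x^i)$, so that $\Gamma^i_{jk}\equiv 0$ on that chart; by \eqref{lift2}--\eqref{lift3} one has $(\partial_{x^i})^h=\partial_{x^i}$ and $(\partial_{x^i})^v=\partial_{u^i}$, hence in the induced chart $(x^i,u^i)$ the components of $G=ag^s+bg^h+cg^v$ are the \emph{constants} $(a+c)\delta_{ij}$, $b\,\delta_{ij}$, $a\,\delta_{ij}$, and a pseudo-Riemannian metric with constant components is flat. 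In particular $\overline{\textup{Ric}}\equiv 0$, so the soliton equation $\overline{\textup{Ric}}+\tfrac12\mathcal{L}_Z G=\bar{\lambda}G$ holds.

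\textbf{Necessity.} Conversely, assume $(TM,G,Z,\bar{\lambda})$ is a Ricci soliton. The crucial step is to show that $(M,g)$ is flat: granting this, $(TM,G)$ is flat by the previous paragraph, so $\overline{\textup{Ric}}\equiv 0$ and the soliton equation reduces to $\mathcal{L}_Z G=2\bar{\lambda}G$, which is exactly conclusion (2).

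To prove flatness of $(M,g)$, I would use the explicit expressions for the Levi-Civita connection $\bar{\nabla}$ and the Ricci tensor $\overline{\textup{Ric}}$ of $(TM,G)$ in the constant-coefficient case, extracted from the general $g$-natural formulas in \cite{Abb-Sar4}, \cite{Abb-Kow1}, \cite{Abb-Kow3}. Evaluating $\overline{\textup{Ric}}_{(x,u)}$ on the pairs $(X^h,Y^h)$, $(X^h,Y^v)$, $(X^v,Y^v)$ at an arbitrary $(x,u)$ yields a polynomial in $u$ in which the curvature $R$ of $(M,g)$ enters both through terms \emph{linear} in $R$ (such as $\textup{Ric}(X,Y)$, independent of $u$, and $g(R(X,u)u,Y)$) and through terms \emph{quadratic} in $R$, the latter being, for $X=Y$, combinations — with coefficients depending on $a,b,c$ through the nondegenerate quantities $a$, $a+c$, $a(a+c)-b^2$ — of sums of squares like $\sum_i|R(u,E_i)X|_g^2$. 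By contrast, $\tfrac12\mathcal{L}_Z G=\tfrac12\bigl(G(\bar{\nabla}_\cdot Z,\cdot)+G(\bar{\nabla}_\cdot Z,\cdot)\bigr)$ is assembled from the components of $Z$, their base- and fiber-derivatives, and the Christoffel symbols of $G$ (which are linear in the curvature data of $(M,g)$), hence depends on that curvature \emph{at most linearly}, while $\bar{\lambda}G$ carries no curvature at all. Substituting into $\overline{\textup{Ric}}+\tfrac12\mathcal{L}_Z G=\bar{\lambda}G$ and separating the terms according to their order in $R$ (and within each order by their polynomial degree in $u$) forces the quadratic-in-$R$ part of $\overline{\textup{Ric}}$ to vanish identically on $TM$; combining the three components then yields $R(u,E_i)X=0$ for all $i$, all $X$, and all $u\in M_x$, i.e. $R\equiv 0$. (When $n=2$ one may instead simply note that flat $\iff$ Ricci-flat and argue with the trace of the soliton equation.)

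\textbf{Main obstacle.} The delicate point is exactly this last separation argument: since $Z$ is arbitrary, $\mathcal{L}_Z G$ is a priori unconstrained, so one must genuinely show that the pieces of $\overline{\textup{Ric}}_{(x,u)}$ that are quadratic in $R$ — and, crucially, of higher polynomial degree in the fiber variable $u$ than anything $\mathcal{L}_Z G$ or $\bar{\lambda}G$ can produce — cannot be cancelled and therefore vanish on their own. Making this rigorous requires the precise form of $\overline{\textup{Ric}}_{(x,u)}$ and a careful comparison of the $u$-dependence on the two sides of the soliton equation (for instance by applying iterated vertical derivatives $\partial/\partial u^i$, or by restricting the equation to a ray $t\mapsto(x,tu_0)$ and matching powers of $t$), and uses the nondegeneracy hypotheses $a\neq 0$, $a+c\neq 0$, $a(a+c)-b^2\neq 0$ throughout to invert the algebraic coefficients coming from $G$. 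Once flatness of the base is secured, the remainder of the proof is immediate.
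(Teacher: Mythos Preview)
Your sufficiency argument is correct and coincides with the paper's.

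The gap is in the necessity direction, and it is exactly the one you flag as the ``main obstacle'' --- but your proposed resolutions do not work. The heuristic of separating the soliton equation by ``order in $R$'' is not meaningful here: $Z$ is the \emph{unknown}, so its components are arbitrary smooth functions on $TM$ and $\mathcal{L}_Z G$ carries no intrinsic ``order in $R$'' at all. For the same reason, matching powers of $t$ along a fibre ray fails: nothing prevents the components of $Z$ from having arbitrary polynomial growth in $u$, so there is no a priori degree mismatch between the two sides. (Your side remark for $n=2$ is also unjustified: the trace of the soliton equation only gives $\overline{\textup{scal}}+\operatorname{div}Z=2n\bar{\lambda}$, which does not force $\overline{\textup{scal}}=0$.)

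What the paper does is extract an \emph{integrability condition} from a single block of the equation. In normal coordinates at $x$, the vertical--vertical component of the soliton equation reads
\[
\partial_{u^j}D_i+\partial_{u^i}D_j=\sum_{l} g\bigl(R(u,\partial_i)\partial_l,\,R(u,\partial_j)\partial_l\bigr),
\qquad D_i:=\tfrac{2\alpha^2}{a^4}\bigl(\bar{\lambda}a\,u_i-(bA_i+aB_i)\bigr),
\]
since $G(\partial_{u^i},\partial_{u^j})=a\delta_{ij}$ and the Ricci block is exactly the displayed quadratic form in $u$. The point is that this is a symmetric first-order system for the $D_i$ on the fibre $M_x$, and such systems are overdetermined: from the diagonal equations one reads off $\partial_{u^i}D_i$ explicitly (a homogeneous quadratic in the $u^s$ with $s\neq i$), and then applying $\partial^2_{u^j}$, $\partial^2_{u^i}$ and comparing with the mixed derivative of the off-diagonal equation yields, for $i\neq j$,
\[
2\sum_l\|R(\partial_i,\partial_j)\partial_l\|^2=-\sum_l\|R(\partial_i,\partial_j)\partial_l\|^2,
\]
hence $R\equiv 0$. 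So the mechanism is not a degree count but the classical compatibility obstruction for a ``Killing-type'' equation $\partial_{(i}D_{j)}=h_{ij}$; your proposal never isolates this, which is why the argument does not close.
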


\begin{Proof}
	Suppose that $(TM,G,Z,\bar{\lambda})$ is a Ricci Soliton. For $x$  in $M$, let $(x^1,...,x^n)$ be a normal coordinate system at  $x$ and $(\pi^{-1}(U),x^1,...,x^n,u^1,...,u^n)$ its induced coordinate system on $ TM$. With respect to these induced coordinates, $Z$ can be expressed as $Z=\sum_{l=1}^{n} \left[ A^l \frac{\partial}{\partial x^l} +B^l \frac{\partial}{\partial u^l}\right]$. We have in particular
	\begin{equation}\label{Solitons-comb11}
	\overline{\textup{Ric}}(\frac{\partial}{\partial u^i},\frac{\partial}{\partial u^j})+\frac{1}{2}\left(\mathcal{L}_ZG\right)(\frac{\partial}{\partial u^i},\frac{\partial}{\partial u^j})=\bar{\lambda}G(\frac{\partial}{\partial u^i},\frac{\partial}{\partial u^j}),
	\end{equation}
   for $i,j=1,...,n$. Using the expression of the Levi-Civita connection (cf. \cite[p.32]{Abb-Sar5}) and the fact that $(x^1,...,x^n)$ is normal at $x$, we have
\begin{equation}
	\begin{array}{rl}
	\left(\mathcal{L}_ZG\right)_{(x,u)}&(\frac{\partial}{\partial u^i},\frac{\partial}{\partial u^j})  =  G_{(x,u)}(\frac{\partial}{\partial u^i},\bar{\nabla}_{\frac{\partial}{\partial u^j}}Z) +G_{(x,u)}(\frac{\partial}{\partial u^j},\bar{\nabla}_{\frac{\partial}{\partial u^i}}Z)\\
 = & \frac{\partial}{\partial u^j}|_{(x,u)}(bA^i+aB^i)+\sum_{l=1}^n A^lG_{(x,u)}(\frac{\partial}{\partial u^i},\bar{\nabla}_{\frac{\partial}{\partial u^j}}(\frac{\partial}{\partial x^l})^h)\\
  & +\frac{\partial}{\partial u^i}|_{(x,u)}(bA^j+aB^j)+\sum_{l=1}^n A^lG_{(x,u)}(\frac{\partial}{\partial u^j},\bar{\nabla}_{\frac{\partial}{\partial u^i}}(\frac{\partial}{\partial x^l})^h)
	\\
	= & \frac{\partial}{\partial u^j}|_{(x,u)}(bA^i+aB^i) +\frac{\partial}{\partial u^i}|_{(x,u)}(bA^j+aB^j), \\
	\end{array}
	\end{equation}
for all $(x,u) \in M_x$. On the other hand, using \cite[Proposition 3.1]{Abb-Sar5} and the fact that $(x^1,...,x^n)$ is normal at $x$, we have
	\begin{equation}
	\overline{\textup{Ric}}_{(x,u)}\left(\frac{\partial}{\partial u^i}, \frac{\partial}{\partial u^j}\right)= \frac{a^4}{4\alpha^2}\sum_{l=1}^{n}g\left(R\left(u,\left.\frac{\partial}{\partial x^i}\right|_x\right)\left.\frac{\partial}{\partial x^l}\right|_x,R\left(u,\left.\frac{\partial}{\partial x^j}\right|_x\right)\left.\frac{\partial}{\partial x^l}\right|_x\right).
	\end{equation}
	So we get
	\begin{equation}\label{Di}
\begin{split}
  &\left.\frac{\partial}{\partial u^j}\right|_{(x,u)}(bA^i+aB^i)+\left.\frac{\partial}{\partial u^i}\right|_{(x,u)}(bA^j+aB^j)= \\
   = & 2\left\{\bar{\lambda}a\delta_{ij}-\frac{a^4}{4\alpha^2}\sum_{l=1}^{n}g\left(R\left(u,\left.\frac{\partial}{\partial x^i}\right|_x\right)\left.\frac{\partial}{\partial x^l}\right|_x,R\left(u,\left.\frac{\partial}{\partial x^j}\right|_x\right)\left.\frac{\partial}{\partial x^l}\right|_x\right)\right\}.
\end{split}
	 \end{equation}
	 Let us define the functions $D_i$, $i=1,...,n$, on $M_x$ by
$$D_i=\frac{2\alpha^2}{a^4}[\bar{\lambda}au^i-(bA^i+aB^i)].$$
So \eqref{Di} reduces to
	\begin{equation}\label{DD}
	\left.\frac{\partial D_i}{\partial u^j}\right|_{(x,u)}+\left.\frac{\partial D_j}{\partial  u^i}\right|_{(x,u)}=\sum_{l=1}^{n}g\left(R\left(u,\left.\frac{\partial}{\partial x^i}\right|_x\right)\left.\frac{\partial}{\partial x^l}\right|_x,R\left(u,\left.\frac{\partial}{\partial x^j}\right|_x\right)\left.\frac{\partial}{\partial x^l}\right|_x\right),
	\end{equation}
$i,j=1,...,n$.
	
	For $i=j$, we get
\begin{equation*}
  \begin{split}
    \frac{\partial D_i}{\partial u^i}|_{(x,u)}= & \sum_{l=1}^{n}\frac{1}{2}g\left(R\left(u,\left.\frac{\partial}{\partial x^i}\right|_x\right)\left.\frac{\partial}{\partial x^l}\right|_x,R\left(u,\left.\frac{\partial}{\partial x^i}\right|_x\right)\left.\frac{\partial}{\partial x^l}\right|_x\right) \\
     = & \frac{1}{2}\sum_{l=1}^{n}\sum_{s \neq i, t \neq i}u^su^tg\left(R\left(\left.\frac{\partial}{\partial x^s}\right|_x,\left.\frac{\partial}{\partial x^i}\right|_x\right)\left.\frac{\partial}{\partial x^l}\right|_x,R\left(\left.\frac{\partial}{\partial x^t}\right|_x,\left.\frac{\partial}{\partial x^i}\right|_x\right)\left.\frac{\partial}{\partial x^l}\right|_x\right).
  \end{split}
\end{equation*}
Hence
$$D_i=\frac{1}{2}\sum_{l=1}^{n}\sum_{s \neq i, t \neq i,r}u^su^t u^rg\left(R\left(\left.\frac{\partial}{\partial x^s}\right|_x,\left.\frac{\partial}{\partial x^r}\right|_x\right)\left.\frac{\partial}{\partial x^l}\right|_x,R\left(\left.\frac{\partial}{\partial x^t}\right|_x,\left.\frac{\partial}{\partial x^r}\right|_x\right)\left.\frac{\partial}{\partial x^l}\right|_x\right)+f_i,$$
with $f_i$ is a $C^{\infty}$ function on $M_x$ which does not depend on $u^i$.

For $i \neq j,$ deriving  equation \eqref{DD} twice with respect to $u_j$ and $u_i$ and summing up, we obtain
$$\frac{\partial^2}{\partial^2 u^j}\left(\frac{\partial D_i}{\partial u^i}\right)+\frac{\partial^2}{\partial^2 u^i}\left(\frac{\partial D_j}{\partial  u^j}\right)=\sum_{l=1}^{n}\frac{\partial }{\partial u^i}\left(\frac{\partial}{\partial u^j}\left(g\left(R\left(u,\left.\frac{\partial}{\partial x^i}\right|_x\right)\left.\frac{\partial}{\partial x^l}\right|_x,R\left(u,\left.\frac{\partial}{\partial x^j}\right|_x\right)\left.\frac{\partial}{\partial x^l}\right|_x\right)\right)\right),$$
which gives
$$2\sum_{l=1}^{n}\left\|R\left(\frac{\partial}{\partial x^i},\frac{\partial}{\partial x^j}\right)\frac{\partial}{\partial x^l}\right\|^2=-\sum_{l=1}^{n}\left\|R\left(\frac{\partial}{\partial x^i},\frac{\partial}{\partial x^j}\right)\frac{\partial}{\partial x^l}\right\|^2.$$
Hence the Riemannian curvature $R$ is zero, i.e. $ (M, g) $ is flat. We deduce that $(TM,G)$ is flat and $Z$ is a homothetic vector field, with homothety factor $\bar{\lambda}$.

The converse is easy to prove.
\end{Proof}

To classify completely Ricci solitons for $G=ag^s+bg^h+cg^v$, $a \neq 0$, $a+c \neq 0$ and $a(a+c) -b^2 \neq 0$, it is sufficient to characterize homothetic vector field on $(TM,G)$. We start with the following lemma which gives a local characterization of the Lie derivative with respect to an arbitrary vector field on $TM$:

\begin{Lem}\label{Lie-Z}
	Let $(M,g)$ be a flat Riemannian manifold and $G=ag^s+bg^h+cg^v$. Then for all $Z \in \mathfrak{X}(TM)$ and $X, Y \in \mathfrak{X}(M)$, we have
	\begin{equation*}
	\begin{split}
	\mathcal{L}_ZG(X^h,Y^h)= & [(a+c) A^i +bB^i] \mathcal{L}_{\frac{\partial}{\partial x^i}}g(X,Y)\\
&+[(a+c)X^h(A^i) +bX^h(B^i)]g(\frac{\partial}{\partial x^i},Y)\\
&+[(a+c)Y^h(A^i) +bY^h(B^i)]g(X,\frac{\partial}{\partial x^i}) ,\\ \\
	\mathcal{L}_ZG(X^h,Y^v)= &[b^i A^i +aB^i] g(\nabla_{\frac{\partial}{\partial x^i}}X,Y)\\
&+[bX^h(A^i) +aX^h(B^i)]g(\frac{\partial}{\partial x^i},Y)\\
&+[(a+c)Y^v(A^i) +bY^v(B^i)]g(X,\frac{\partial}{\partial x^i}) ,\\ \\
	\mathcal{L}_ZG(X^v,Y^v)= & [bX^v(A^i) +aX^v(B^i)]g(\frac{\partial}{\partial x^i},Y)\\
&[bY^v(A^i) +aY^v(B^i)]g(X,\frac{\partial}{\partial x^i}),
	\end{split}
	\end{equation*}
where $Z=A^i\left(\frac{\partial}{\partial x^i}\right)^h +B^i\left(\frac{\partial}{\partial x^i}\right)^v$.
\end{Lem}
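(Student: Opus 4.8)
The plan is to compute each of the three components of $\mathcal{L}_Z G$ directly from the identity $(\mathcal{L}_Z G)(U,W)=Z(G(U,W))-G([Z,U],W)-G(U,[Z,W])$, working in a coordinate chart $(x^i)$ of $M$ with induced chart $(x^i,u^i)$ on $TM$, where $Z=A^i(\partial_{x^i})^h+B^i(\partial_{x^i})^v$ as in the statement. Only three kinds of elementary input are needed: (i) the values of $G$ on lifts, $G(X^h,Y^h)=(a+c)g(X,Y)$, $G(X^h,Y^v)=bg(X,Y)$, $G(X^v,Y^v)=ag(X,Y)$, read off from $G=ag^s+bg^h+cg^v$; (ii) the action of lifts on functions pulled back from $M$, $X^h(f\circ\pi)=(Xf)\circ\pi$ and $X^v(f\circ\pi)=0$; (iii) the bracket relations $[X^h,Y^h]_{(x,u)}=[X,Y]^h-(R_x(X,Y)u)^v$, $[X^h,Y^v]=(\nabla_X Y)^v$, $[X^v,Y^v]=0$, together with the Leibniz rule $[fU,W]=f[U,W]-(Wf)U$. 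The flatness of $(M,g)$ enters exactly here: it collapses the first bracket to $[X,Y]^h$.

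First I would expand $[Z,X^h]$ and $[Z,X^v]$. Writing $X^h=(X^i\circ\pi)(\partial_{x^i})^h$ and $X^v=(X^i\circ\pi)(\partial_{x^i})^v$ and using (ii)--(iii) and Leibniz, one obtains, after dropping the curvature contribution by flatness, $[Z,X^h]=A^k(\nabla_{\partial_{x^k}}X)^h-A^k(\nabla_X\partial_{x^k})^h-B^k(\nabla_X\partial_{x^k})^v-X^h(A^k)(\partial_{x^k})^h-X^h(B^k)(\partial_{x^k})^v$ and $[Z,X^v]=A^k(\nabla_{\partial_{x^k}}X)^v-X^v(A^k)(\partial_{x^k})^h-X^v(B^k)(\partial_{x^k})^v$. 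Pairing these, and the analogous $[Z,Y^h]$, $[Z,Y^v]$, against $Y^h$ or $Y^v$ by means of the $G$-values in (i) already produces every term of the three asserted formulas apart from those coming from $Z(G(U,W))$; in particular it yields precisely the terms $[(a+c)X^h(A^i)+bX^h(B^i)]g(\partial_{x^i},Y)$, $[bX^h(A^i)+aX^h(B^i)]g(\partial_{x^i},Y)$, $[bX^v(A^i)+aX^v(B^i)]g(\partial_{x^i},Y)$ and their $Y$-companions.

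It remains to handle $Z(G(U,W))$. Since $G(U,W)$ is a constant multiple $\lambda\,(g(X,Y)\circ\pi)$ with $\lambda\in\{a+c,b,a\}$, only the horizontal part of $Z$ acts, giving $\lambda A^k\,\partial_{x^k}(g(X,Y))$; rewriting $\partial_{x^k}(g(X,Y))=\mathcal{L}_{\partial_{x^k}}g(X,Y)+g([\partial_{x^k},X],Y)+g(X,[\partial_{x^k},Y])$ with $[\partial_{x^k},X]=\nabla_{\partial_{x^k}}X-\nabla_X\partial_{x^k}$, the $g(\nabla_{\partial_{x^k}}X,Y)$- and $g(X,\nabla_{\partial_{x^k}}Y)$-type pieces cancel against those left over from the bracket terms, and the remaining fragments recombine, via the Killing-type identity $\mathcal{L}_{\partial_{x^k}}g(X,Y)=g(\nabla_X\partial_{x^k},Y)+g(\nabla_Y\partial_{x^k},X)$ (true since $\nabla$ is metric and torsion-free), into the ``base'' terms with coefficients $(a+c)A^k+bB^k$, $bA^k+aB^k$, and $0$ in the three cases respectively. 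Collecting all contributions gives the claimed formulas. The one real difficulty is bookkeeping: organizing the many terms and, crucially, matching the output of $\partial_{x^k}(g(X,Y))$ with the pieces produced by the brackets so that the Killing-type identity applies; beyond the bracket identities for lifts and the vanishing of curvature, no further geometric input is required.
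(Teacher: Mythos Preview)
Your proposal is correct: the direct computation via $(\mathcal{L}_Z G)(U,W)=Z(G(U,W))-G([Z,U],W)-G(U,[Z,W])$, together with the standard bracket relations for horizontal and vertical lifts (with the curvature term killed by flatness) and the Leibniz rule, yields exactly the three stated identities after the cancellations you describe. The paper itself states this lemma without proof, so there is no alternative argument to compare against; the approach you outline is the natural one and is implicitly what the paper intends, consistent with how the analogous identities in the proof of Theorem~\ref{Solitons-comb1} are obtained by citing the explicit Levi-Civita connection formulas from \cite{Abb-Sar5}.
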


For a vector field $\xi$ on $M$, we denote by $C(\xi)$ the $(1,1)$-tensor field on $M$ defined by $g(C(\xi)Y,Z)=-g(Y,\nabla_Z \xi)$, for all vector fields $Y$ and $Z$ on $M$. Locally, if $\xi =\xi^i  \frac{\partial}{\partial x^i}$, then $C(\xi)^i_j=-g^{ik}g_{jl}\xi^l_{;k}$, where$\xi^l_{;k}$ are the local components of the $(1,1)$-tensor field $\nabla \xi$. We denote also by $I$ the identity $(1,1)$-tensor field on $M$. Then we have

\begin{Th}\label{hom-flat}
	Let $(TM, G)$ be the tangent bundle of a flat Riemannian manifold $(M,g)$ with $G=ag^s+bg^h+cg^v,$ and $Z$ be a vector field on $TM.$ $Z$ is a homothetic vector field on $(TM,G)$ if and only if $Z$ is expressed as
\begin{equation}\label{Z}
\begin{split}
  Z= & \frac1\alpha \{[a\zeta -b\xi]^h + *[a(C(\xi) -bP +\lambda ab I]\\
    & +[(a+c)\xi -b\zeta]^v  +\iota[(a+c)P -bC(\xi) +\lambda (\alpha -b^2)I]\},
\end{split}
\end{equation}
where
	\begin{enumerate}
		\item $\zeta$ is a homothetic vector field on $M$ satisfying $\mathcal{L}_{\zeta}g=2\lambda (a+c)g$;
		\item $P$ is a $(1,1)$ tensor field on $M$ which is skew-symmetric and parallel;
		\item $\xi$ is a vector field on $(M,g)$  satisfying  $\nabla^2\xi(U,V)+\nabla^2\xi(V,U)=0$ for any $U,V\in \mathfrak{X}(M).$
	\end{enumerate}
\end{Th}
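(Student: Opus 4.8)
The plan is to reduce the homothety equation $\mathcal{L}_ZG = 2\lambda G$ to a system of PDEs on the components $A^i$, $B^i$ of $Z$ (written as $Z=A^i(\partial/\partial x^i)^h+B^i(\partial/\partial x^i)^v$ in induced coordinates associated to a normal chart at a point, so that $g$ is locally Euclidean and the Christoffel symbols vanish at the center), using Lemma \ref{Lie-Z} to evaluate the three blocks $\mathcal{L}_ZG(X^h,Y^h)$, $\mathcal{L}_ZG(X^h,Y^v)$, $\mathcal{L}_ZG(X^v,Y^v)$ against $G=ag^s+bg^h+cg^v$. First I would treat the $(X^v,Y^v)$-block: since $G(X^v,Y^v)=ag(X,Y)$, the homothety condition there forces $b\,X^v(A^i)+a\,X^v(B^i)$, symmetrized in the coordinate indices, to equal $2\lambda a\,g_{ij}$; combined with the $(X^h,Y^v)$-block (where $G(X^h,Y^v)=bg(X,Y)$) this pins down how $A^i$ and $B^i$ depend on the fiber coordinates $u^j$ — they must be \emph{affine} in $u$, i.e. $A^i=\zeta^i(x)+\text{(linear in }u)$ and $B^i=\xi^i(x)+\text{(linear in }u)$, and the linear-in-$u$ parts are governed by $(1,1)$-tensor fields on $M$. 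This is exactly where the combinations $a\zeta-b\xi$, $(a+c)\xi-b\zeta$, $aC(\xi)-bP$, $(a+c)P-bC(\xi)$ and the $\lambda$-terms in \eqref{Z} emerge after inverting the invertible $2\times2$ coefficient matrix $\begin{pmatrix} a+c & b\\ b & a\end{pmatrix}$ (whose determinant is $\alpha=a(a+c)-b^2\neq0$).

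Next I would feed this affine ansatz back into the $(X^h,Y^h)$-block. There $G(X^h,Y^h)=(a+c)g(X,Y)$, and Lemma \ref{Lie-Z} gives $\mathcal{L}_ZG(X^h,Y^h)$ in terms of $\mathcal{L}_{\partial/\partial x^i}g$ (which vanishes in the flat normal chart) plus the horizontal derivatives $X^h(A^i)$, $X^h(B^i)$; since $X^h$ differentiates only in the $x$-directions at the center, the $u$-independent parts $\zeta$, $\xi$ contribute $\mathcal{L}_\zeta g$ and $\nabla\xi$ terms, while the $u$-linear parts contribute the derivatives of the defining tensor fields. Separating the $u$-independent and $u$-linear pieces of this block yields: (i) $\mathcal{L}_\zeta g = 2\lambda(a+c)g$, i.e. $\zeta$ is homothetic with the stated factor; (ii) a condition on the tensor field $P$ appearing in the $u$-linear part of $B$, namely that it be parallel (from the $u$-linear identity having to hold with vanishing Christoffel symbols throughout a neighborhood, not just at the center) and skew-symmetric (from the symmetrization in the coordinate indices combined with the $(X^v,Y^v)$-constraint already forcing the symmetric part to be $\lambda$ times the identity, which gets absorbed into the explicit $\lambda$-terms); and (iii) the condition on $\xi$: the mixed $(X^h,Y^v)$-block, once the $u$-linear data is fixed, leaves precisely $\nabla_X\xi$ entering symmetrically, forcing $\nabla^2\xi(U,V)+\nabla^2\xi(V,U)=0$. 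Conversely, given $\zeta$, $P$, $\xi$ with properties (1)–(3), one substitutes \eqref{Z} into Lemma \ref{Lie-Z} and checks all three blocks reproduce $2\lambda G$ — a direct computation using flatness ($R=0$, so $\nabla$ has trivial holonomy and second covariant derivatives commute) and parallelism of $P$.

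The main obstacle I anticipate is bookkeeping: disentangling the five tensorial unknowns hidden in $A^i$, $B^i$ from the single homothety equation requires carefully expanding each block of Lemma \ref{Lie-Z}, matching powers of $u$ separately, and then performing the linear-algebra inversion by the matrix $\begin{pmatrix} a+c & b\\ b& a\end{pmatrix}$ consistently across all blocks so that the coefficients in \eqref{Z} come out in the claimed form. A subtler point is the distinction between a pointwise identity at the center of a normal chart and an identity valid on a neighborhood: the parallelism of $P$ and the closedness-type condition on $\xi$ genuinely use that flatness lets us choose such charts around \emph{every} point and that the Levi-Civita connection is then globally trivializable, so the locally-defined tensor fields glue to global ones and their vanishing covariant derivatives are a global statement. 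I would also need to double-check that the conditions (1)–(3) are independent of the choice of $\lambda$-normalization, i.e. that the explicit $\lambda$-terms in \eqref{Z} are exactly what is needed to make the symmetric parts of the tensorial data consistent with the homothety factor; once that is verified, the equivalence follows.
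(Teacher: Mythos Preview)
Your proposal is correct and follows essentially the same route as the paper: evaluate the three blocks of Lemma~\ref{Lie-Z} on coordinate lifts, introduce the combinations $W_i=bA_i+aB_i-\lambda a\,u_i$ and $V^i=(a+c)A^i+bB^i$ so that the $2\times2$ inversion by $\begin{pmatrix}a+c&b\\ b&a\end{pmatrix}$ (determinant $\alpha$) recovers $A^i,B^i$ in the form~\eqref{Z}, and read off the tensorial conditions by matching powers of $u$.

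One point of bookkeeping to watch: you have the attribution of two conditions swapped relative to how the computation actually unfolds. In the paper's argument the parallelism of $P$ is forced already at the $(X^h,Y^v)$-block stage --- when you integrate $\partial V^i/\partial u^j$ you pick up an a~priori quadratic term $T^i_{jk}u^ju^k$, and it is the identity $2T^i_{jk}=-g^{im}g_{jl}P^l_{k;m}$ (symmetric in $j,k$ on the left, skew in $j,k$ on the right) that kills both $T$ and $\nabla P$ simultaneously. Conversely, the condition $\nabla^2\xi(U,V)+\nabla^2\xi(V,U)=0$ does \emph{not} come from the mixed block but from the $u$-linear part of the $(X^h,Y^h)$-block, since that is where $C(\xi)^k_{l;j}=-g^{km}g_{ls}\xi^s_{;mj}$ appears. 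This swap will sort itself out once you actually expand each block, but be aware that the affine-in-$u$ conclusion for $V^i$ is not immediate from the $(h,v)$-block alone --- it requires the parallelism step.
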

\begin{Proof}
	Suppose that $Z$ is a homothetic vector field, then there is a constant $\lambda$ such that
	\begin{equation}\label{C1}
	\mathcal{L}_ZG=2\lambda G.
	\end{equation}
	Taking, in the third equation of Lemma \ref{Lie-Z}, $X=\frac{\partial}{\partial x^i}$ and $Y=\frac{\partial}{\partial x^j}$ and taking into account \eqref{C1}, we obtain
    \begin{equation}\label{C2}
      2\lambda a g_{ij}=b\left[\frac{\partial A_i}{\partial u^j} +\frac{\partial A_j}{\partial u^i}\right] +a\left[\frac{\partial B_i}{\partial u^j} +\frac{\partial B_j}{\partial u^i}\right], \quad i,j=1,...,n,
    \end{equation}
    where $A_i:=g_{ik}A^k$ and $B_i:=g_{ik}B^k$. Putting
    \begin{equation}\label{C3}
      W_i:=bA_i+aB_i -\lambda a u_i,
    \end{equation}
    where $u_i:=g_{ik}u^k$, it is easy to see that \eqref{C2} is equivalent to
    \begin{equation}\label{C4}
      \frac{\partial W_i}{\partial u^j} +\frac{\partial W_j}{\partial u^i}=0, \quad i,j=1,...,n.
    \end{equation}
    For $i=j$, \eqref{C4} gives
    \begin{equation}\label{C5}
      \frac{\partial W_i}{\partial u^i}=0, \quad i=1,...,n.
    \end{equation}
    Deriving \eqref{C4} with respect to $u^j$ and taking into account \eqref{C5}, we obtain $\frac{\partial^2 w_i}{(u^j)^2}=0$, for all $i,j=1,...,n$. It follows that there are functions $\xi_i$, $P_{ij}$, $i,j=1,...,n$, such that
    \begin{equation}\label{C6}
      W_i=\xi_i +P_{ik}u^k, \quad i=1,...,n.
    \end{equation}
    Equation \eqref{C4} is equivalent to
    \begin{equation}\label{C7}
      P_{ij}+P_{ji}=0, \quad i,j=1,...,n.
    \end{equation}
    Now we put
    \begin{equation}\label{C8}
      V^i:=(a+c)A^i +bB^i, \quad i=1,...,n.
    \end{equation}
    Taking, in the second equation of Lemma \ref{Lie-Z}, $X=\frac{\partial}{\partial x^i}$ and $Y=\frac{\partial}{\partial x^j}$ and taking into account \eqref{C1}, \eqref{C6} and \eqref{C8}, we obtain
    \begin{equation}\label{C9}
      2\lambda bg_{ij} =  g_{jk}\xi^k_{;i} +g_{jk}P^k_{l;i}u^l +g_{ik}\frac{\partial V^k}{\partial u^j}, \quad i,j=1,...,n,
    \end{equation}
    where $\xi^i:=g^{ik}\xi_k$, $P^i_j:= g^{ik}P_{kj}$ and $\xi^i_{;j}$ and $P^i_{j;k}$ are the local components of the differentials $\nabla \xi$ and $\nabla P$ of the local vector field $\xi:=\xi^i \frac{\partial}{\partial x^i}$ and the local $(1,1)$-tensor field $P:=P^i_j \frac{\partial}{\partial x^i} \otimes dx^j$, respectively, given by
    \begin{equation*}
      \xi^i_{;j}=\frac{\partial \xi^i}{\partial x^j} +\Gamma^i_{jk} \xi^k, \quad P^i_{j;k}= \frac{\partial \xi^k}{P^i_j} +\Gamma^i_{kl}P^l_j -\Gamma^l_{kj}P^i_l.
    \end{equation*}
    It follows then, from \eqref{C9}, that
    \begin{equation}\label{C10}
      \frac{\partial V^i}{\partial u^j}=2\lambda b \delta^i_j -g^{il}g_{jk}\xi^k_{;l} -g^{im}g_{jk}P^k_{l;m}u^l, \quad i,j=1,...,n.
    \end{equation}
    We deduce that
    \begin{equation}\label{C11}
      V^i= \zeta^i+ 2\lambda bu^i -g^{il}g_{jk}\xi^k_{;l}u^j +T^i_{jk} u^ju^k,
    \end{equation}
    where $\zeta^i$ and $T^i_{jk}$ are $C^\infty$-functions such that $T^i_{jk}$ are symmetric in $j$ and $k$. Substituting from \eqref{C11} into \eqref{C10}, we obtain
    \begin{equation*}
      2T^i_{jk} = -g^{im}g_{jl}P^l_{k;m}.
    \end{equation*}
    But we deduce from \eqref{C7} that $-g^{im}g_{jl}P^l_{k;m}$ is skew-symmetric, and hence
    \begin{equation}\label{C12}
    P^i_{j;k}=0,\quad i,j,k=1,...,n.
    \end{equation}
    It follows then that
    \begin{equation*}
      V^i= \zeta^i+ 2\lambda bu^i -g^{il}g_{jk}\xi^k_{;l}u^j, \quad i=1,...,n,
    \end{equation*}
    or, in other words,
    \begin{equation}\label{C13}
      V^i= \zeta^i+ 2\lambda bu^i +C(\xi)^i_ju^j, \quad i=1,...,n,
    \end{equation}
    where $C(\xi)$ is the $(1,1)$-tensor field given in Theorem \eqref{hom-flat}, given locally by $C(\xi)^i_j=-g^{il}g_{jk}\xi^k_{;l}$.

    Finally, taking, in the first equation of Lemma \ref{Lie-Z}, $X=\frac{\partial}{\partial x^i}$ and $Y=\frac{\partial}{\partial x^j}$ and taking into account \eqref{C1} and \eqref{C13}, we obtain
    \begin{equation}\label{C14}
      \begin{split}
        2\lambda (a+c) g_{ij} & = g_{ik}\zeta^k_j +g_{jk}\zeta^k_i +(g_{ik}C(\xi)^k_{l;j} +g_{jk}C(\xi)^k_{l;i})u^l \\
          & = g_{ik}\zeta^k_{;j} +g_{jk}\zeta^k_{;i} -g_{lm}(\xi^m_{;ij} +\xi^m_{;ji})u^l, \quad i,j=1,...,n.
      \end{split}
    \end{equation}
    We deduce that
    \begin{equation}\label{C15}
      2\lambda (a+c) g_{ij} = g_{ik}\zeta^k_{;j} +g_{jk}\zeta^k_{;i}, \quad i,j=1,...,n,
    \end{equation}
    i.e. $\zeta$ is a homothetic vector field of homothety factor $\lambda (a+c)$, and
    \begin{equation}\label{C16}
      \xi^m_{;ij} +\xi^m_{;ji}=0, \quad i,j=1,...,n,
    \end{equation}
    i.e. $\nabla^2 \xi$ is skew-symmetric.

    Now, substituting from \eqref{C6} and \eqref{C13} into \eqref{C3} and \eqref{C8}, we obtain
	\begin{equation}\label{C17}
	\left\{
	\begin{split}
	A^i= & \frac1\alpha \{a\zeta^i -b\xi^i +[aC(\xi)^i_j -bP^i_j +\lambda ab I^i_j]u^j\},\\
	B^i= & \frac1\alpha \{-b\zeta^i +(a+c)\xi^i +[-bC(\xi)^i_j +(a+c)P^i_j +\lambda (\alpha-b^2) I^i_j]u^j\},
	\end{split}
	\right.
	\end{equation}
$i=1,...,n$. We deduce that $Z$ is expressed as \eqref{Z}.

The converse is easy to prove.	
\end{Proof}


\section{Ricci solitons of Kaluza-Klein type on unit tangent sphere bundles}



{ In this section, we suppose that $(M,g)$ is a Riemannian manifold of constant sectional curvature and that its unit tangent sphere bundle $T_1M$ is endowed with a $g$-natural metric $\tilde{G}$ of Kaluza-Klein type.
	We now  characterize vector fields $V$ on $T_1M$, which, together with a metric $\widetilde{G}$ of Kaluza-Klein type, give rise to a Ricci soliton. 
A routine calculation, using Propositions \ref{cor-LC} and \ref{cor-ric}, yields the following	
	
	\begin{Pro}\label{RSeqG}
		Let $(M,g)$ be a Riemannian manifold of constant sectional curvature $\kappa$, $\widetilde{G}$ be a pseudo-Riemannian $g$-natural metric of Kaluza-Klein type on $T_1M$, $V$ be a vector field on $T_1 M$ and $\lambda \in \mathbb{R}$. Then $(\widetilde{G},V,\lambda)$ is a Ricci soliton if and only if the following equations are satisfied:
		\begin{equation}\label{GC3}
	\left\{
	\begin{array}{ll}
	\left(\mathcal{L}_{V}\widetilde{G}\right)_{(x,u)}(X^h,Y^h) =& 2(a+c)(\lambda-\nu) g(X,Y)+2(\lambda d-\theta)g(X,u)g(Y,u),\\[4pt]
	\left(\mathcal{L}_{V}\widetilde{G}\right)_{(x,u)}(X^h,Y^t) =& 0,\\[4pt]
	\left(\mathcal{L}_{V}\widetilde{G}\right)_{(x,u)}(X^t,Y^t) =& 2a[\lambda-\mu]g(X,Y),
	\end{array}
	\right.
	\end{equation}
		for all $x \in M$, $(x,u) \in T_1 M$ and $X$, $Y \in M_x$ satisfying Convention \ref{conv}, where
	\begin{equation}\label{mu-nu}
	\left\{
	\begin{array}{ll}
	\mu= & \frac{1}{2a(a+c)\varphi} [a^2\kappa^2 +2(n-2)(a+c)\varphi -d^2], \\[4pt]
	\nu= & \frac{1}{2a(a+c)\varphi}  [-a^2\kappa^2 +2(n-1)a\varphi \kappa +d(d-2\varphi)], \\[4pt]
	\theta= & \frac{1}{2a(a+c)\varphi}[-a^2((n-2)\varphi +d)\kappa^2  +d(2n(a+c)\varphi+(n-1)d\varphi -d(a+c))],
	\end{array}
	\right.
	\end{equation}
	or, in other words
	\begin{equation*}
	\mathcal{L}_{V}\widetilde{G}=2a[\lambda-\mu]\widetilde{g^s}+2[a(\mu-\nu)+c(\lambda-\nu)]\widetilde{g^v}+2[\lambda d-\theta]\widetilde{k^v},
	\end{equation*}
where $\widetilde{g^s}$, $\widetilde{g^v}$ and $\widetilde{k^v}$ are the induced metrics on $T_1M$ from $g^s$, $g^v$ and $k^v$, respectively.
	\end{Pro}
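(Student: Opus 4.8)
The plan is to unfold the Ricci soliton equation $\widetilde{\textup{Ric}}+\frac12\mathcal{L}_{V}\widetilde{G}=\lambda\widetilde{G}$ into its components along the natural frame of $T_1M$. By Convention \ref{conv} together with the description \eqref{tang-space-G}, at each $(x,u)\in T_1M$ the tangent space is spanned by the horizontal lifts $X^h$ ($X\in M_x$) and the tangential lifts $Y^t$ ($Y\in\{u\}^\perp$); since $\widetilde{\textup{Ric}}$, $\mathcal{L}_{V}\widetilde{G}$ and $\widetilde{G}$ are all symmetric $(0,2)$-tensors, the soliton identity is equivalent to the three scalar identities obtained by testing it on the pairs $(X^h,Y^h)$, $(X^h,Y^t)$ and $(X^t,Y^t)$. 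This reduces the whole statement to a bounded amount of bookkeeping.

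For each of these three blocks I would substitute the components of $\widetilde{\textup{Ric}}$ from Proposition \ref{cor-ric}, equations \eqref{ric1k}--\eqref{ric3k}, together with the components of $\widetilde{G}$ from Theorem \ref{nat-met-unit} specialized to $b=0$, namely $\widetilde{G}_{(x,u)}(X^h,Y^h)=(a+c)g(X,Y)+d\,g(X,u)g(Y,u)$, $\widetilde{G}_{(x,u)}(X^h,Y^t)=0$ and $\widetilde{G}_{(x,u)}(X^t,Y^t)=a\,g(X,Y)$. The one step that needs care is the algebraic identification: comparing with the definitions \eqref{mu-nu} one checks that the coefficient of $g(X,Y)$ in $\widetilde{\textup{Ric}}_{(x,u)}(X^h,Y^h)$ is $(a+c)\nu$, the coefficient of $g(X,u)g(Y,u)$ there is $\theta$, and the coefficient of $g(X,Y)$ in $\widetilde{\textup{Ric}}_{(x,u)}(X^t,Y^t)$ is $a\mu$. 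Moving $\widetilde{\textup{Ric}}$ to the right-hand side and multiplying by $2$ then turns the soliton equation on each block into precisely the corresponding line of the system \eqref{GC3}; the middle equation is immediate since $\widetilde{\textup{Ric}}_{(x,u)}(X^h,Y^t)=0=\widetilde{G}_{(x,u)}(X^h,Y^t)$. Reading the same computation backwards gives the converse: \eqref{GC3} forces the soliton identity on a spanning set of pairs, hence on all of $(T_1M)_{(x,u)}$. If one wishes to expand $\mathcal{L}_{V}\widetilde{G}(A,B)=\widetilde{G}(\widetilde{\nabla}_{A}V,B)+\widetilde{G}(A,\widetilde{\nabla}_{B}V)$ explicitly, this is where the Levi-Civita connection of Proposition \ref{cor-LC} enters, but it is not actually needed for the statement as phrased.

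Finally, for the reformulation in terms of $\widetilde{g^s}$, $\widetilde{g^v}$ and $\widetilde{k^v}$, I would simply evaluate the proposed right-hand side on the same three types of pairs, using that $\widetilde{g^s}(X^h,Y^h)=\widetilde{g^s}(X^t,Y^t)=g(X,Y)$ and $\widetilde{g^s}(X^h,Y^t)=0$, while $\widetilde{g^v}$ and $\widetilde{k^v}$ are supported on the horizontal block with $\widetilde{g^v}(X^h,Y^h)=g(X,Y)$ and $\widetilde{k^v}(X^h,Y^h)=g(X,u)g(Y,u)$ and all remaining components zero. Expanding $2a[\lambda-\mu]\widetilde{g^s}+2[a(\mu-\nu)+c(\lambda-\nu)]\widetilde{g^v}+2[\lambda d-\theta]\widetilde{k^v}$ and collecting terms, the horizontal block produces $2(a+c)(\lambda-\nu)g(X,Y)+2(\lambda d-\theta)g(X,u)g(Y,u)$ and the vertical block produces $2a(\lambda-\mu)g(X,Y)$, matching \eqref{GC3} line by line, while the mixed block vanishes on both sides. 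There is no serious obstacle: once Proposition \ref{cor-ric} is available the argument is pure linear algebra, and the only delicate point is the algebraic bookkeeping identifying the Ricci coefficients with $(a+c)\nu$, $a\mu$ and $\theta$ from \eqref{mu-nu}.
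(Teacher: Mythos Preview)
Your proposal is correct and follows essentially the same approach the paper indicates: it calls this ``a routine calculation, using Propositions \ref{cor-LC} and \ref{cor-ric}'' without further detail, and your argument---substituting the Ricci components from Proposition \ref{cor-ric} and the metric components from Theorem \ref{nat-met-unit} (with $b=0$) into $\mathcal{L}_{V}\widetilde{G}=2\lambda\widetilde{G}-2\widetilde{\textup{Ric}}$ and identifying the resulting coefficients with $(a+c)\nu$, $\theta$, $a\mu$---is exactly that routine calculation. Your remark that Proposition \ref{cor-LC} is not actually needed here is well taken: the Levi-Civita connection only enters when one wants to compute $\mathcal{L}_{V}\widetilde{G}$ for specific $V$ (as in Lemmas \ref{Lie-tang}--\ref{Lie-hor}), not for the equivalence stated in this proposition.
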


	\begin{Rk}
		With the same hypotheses of Proposition \ref{RSeqG}, if  $(\widetilde{G},V,\lambda)$ is a Ricci soliton then
		\begin{itemize}
			\item $V$ is a conformal vector field on $(T_1M,\widetilde{G})$ if and only if $\theta=\mu d$ and $\mu=\nu,$
			\item $V$ is a Killing vector field on $(T_1M,\widetilde{G})$ if and only if $\theta=\lambda d$ and $\lambda=\mu=\nu$.
		\end{itemize}
	\end{Rk}


\subsection{Case when the potential vector field is a complete lift vector field }
	
	
	Recall that if $\xi$ is a vector field on $M$, then its complete lift $\xi^{\bar{c}}$ to $T_1M$ can be expressed as $\xi^{\bar{c}}=X^h +[\iota(\nabla \xi)]^t$. Using Lemmas \ref{Lie-tang} and \ref{Lie-hor}, we have
	
	\begin{Lem}\label{Lie-complete} \cite{KA}
		For all $\xi \in \mathfrak{X}(M)$, $(x,u) \in T_1M$ and $X, Y \in T_xM$, we have
		\begin{equation*}
		\begin{split}
		\left(\mathcal{L}_{\xi^{\bar{c}}}\tilde{G}\right)_{(x,u)}(X^h,Y^h)= & (a+c)\left(\mathcal{L}_{\xi}g\right)_x(X,Y) +d\{\left(\mathcal{L}_{\xi}g\right)_x(Y,u)g(X,u)\\
		&+\left(\mathcal{L}_{\xi}g\right)_x(X,u)g(Y,u)-\left(\mathcal{L}_{\xi}g\right)_x(u,u)g(Y,u)g(X,u), \\
		&  \\
		\left(\mathcal{L}_{\xi^{\bar{c}}}\tilde{G}\right)_{(x,u)}(X^h,Y^t)= & ag(R(\xi_x,X)u+\nabla^2\xi(u,X),Y),\\ \\
		\left(\mathcal{L}_{\xi^{\bar{c}}}\tilde{G}\right)_{(x,u)}(X^t,Y^t)= & a\left[\left(\mathcal{L}_{\xi}g\right)_x(X,Y)-\left(\mathcal{L}_{\xi}g\right)_x(u,u)g(X,Y)\right].
		\end{split}
		\end{equation*}
	\end{Lem}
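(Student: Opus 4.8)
The plan is to compute $\mathcal{L}_{\xi^{\bar c}}\widetilde G$ directly from the decomposition $\xi^{\bar c}_{(x,u)} = \xi^h_{(x,u)} + (\nabla_u\xi)^t_{(x,u)}$, together with the linearity of the Lie derivative in the vector field argument. Since $(x,u)\mapsto (\nabla_u\xi)$ depends on the point of $T_1M$, I cannot simply write $\mathcal{L}_{\xi^{\bar c}}\widetilde G = \mathcal{L}_{\xi^h}\widetilde G + \mathcal{L}_{\eta^t}\widetilde G$ for a fixed field $\eta$; instead I would split $\mathcal{L}_{\xi^{\bar c}}\widetilde G(A,B) = A(\widetilde G(\xi^{\bar c},B)) + B(\widetilde G(A,\xi^{\bar c})) - \widetilde G([\xi^{\bar c},A],B) - \widetilde G(A,[\xi^{\bar c},B])$ and handle the two summands of $\xi^{\bar c}$ term by term, using Proposition~\ref{cor-LC} to evaluate covariant derivatives $\widetilde\nabla_A\xi^{\bar c}$, or equivalently use the Koszul-type identity $\mathcal{L}_{\xi^{\bar c}}\widetilde G(A,B) = \widetilde G(\widetilde\nabla_A\xi^{\bar c},B) + \widetilde G(\widetilde\nabla_B\xi^{\bar c},A)$ valid for the Levi-Civita connection. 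The cleanest route, however, is to exploit that Lemmas~\ref{Lie-tang} and~\ref{Lie-hor} already give $\mathcal{L}_{\xi^t}\widetilde G$ and $\mathcal{L}_{\xi^h}\widetilde G$ for fixed base fields $\xi$, and that the ``extra'' term coming from the $u$-dependence of $\nabla_u\xi$ can be isolated.

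Concretely, I would write, for fixed $(x,u)\in T_1M$ and $X,Y\in M_x$ extended to vector fields on $M$,
\[
\mathcal{L}_{\xi^{\bar c}}\widetilde G(X^h,Y^h) = \mathcal{L}_{\xi^h}\widetilde G(X^h,Y^h) + \big(\text{contribution of }(\nabla_\cdot\xi)^t\big),
\]
and observe that the $(\nabla_\cdot\xi)^t$ contribution to the $(X^h,Y^h)$-component is computed by differentiating $\widetilde G((\nabla_\cdot\xi)^t,Y^h)$ along $X^h$, etc.; but $\widetilde G$ of a tangential lift with a horizontal lift vanishes (Kaluza--Klein type), so the only surviving piece is a Lie-bracket term $-\widetilde G((\nabla_\cdot\xi)^t,[X^h,\cdot])$-type expression, which by~\eqref{comp} and the connection formulas reduces to something that combines with $\mathcal{L}_{\xi^h}\widetilde G(X^h,Y^h)$ to produce the full symmetrized expression $(a+c)(\mathcal{L}_\xi g)(X,Y) + d\{\cdots\}$ in place of the ``half'' expression $(a+c)\{g(\nabla_X\xi,Y)+g(\nabla_Y\xi,X)\}$ from Lemma~\ref{Lie-hor}. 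The key algebraic fact here is that $g(\nabla_X\xi,Y)+g(\nabla_Y\xi,X) = (\mathcal{L}_\xi g)(X,Y)$, so Lemma~\ref{Lie-hor}'s first line is already in the desired form; the $(\nabla_\cdot\xi)^t$ correction must then reproduce the $d$-terms involving $(\mathcal{L}_\xi g)(X,u)$, $(\mathcal{L}_\xi g)(u,u)$. For the $(X^h,Y^t)$ component I would use $\mathcal{L}_{\xi^h}\widetilde G(X^h,Y^t)$ from Lemma~\ref{Lie-hor} and $\mathcal{L}_{\eta^t}\widetilde G(X^h,Y^t)$ from Lemma~\ref{Lie-tang} with $\eta = \nabla_u\xi$ evaluated pointwise, plus the correction term recording how $\nabla_u\xi$ varies; the appearance of the curvature term $R(\xi_x,X)u$ is exactly the Ricci-identity commutation $\nabla_X\nabla_u\xi - \nabla_u\nabla_X\xi - \nabla_{[X,\cdot]}\xi$ combined with $a\kappa g(\cdots)$ from Lemma~\ref{Lie-hor}, which for a constant-curvature space organizes into $a\,g(R(\xi_x,X)u + \nabla^2\xi(u,X),Y)$. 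For the $(X^t,Y^t)$ component, since $\widetilde G(X^t,Y^t) = a\,g(X,Y)$ on vectors orthogonal to $u$ (Convention~\ref{conv}), the horizontal part $\xi^h$ contributes nothing (Lemma~\ref{Lie-hor}, third line) and the tangential part contributes $-2a\,g(\nabla_u\xi,u)\{g(X,Y)\} = -a\,(\mathcal{L}_\xi g)(u,u)\,g(X,Y)$ via $2g(\nabla_u\xi,u) = (\mathcal{L}_\xi g)(u,u)$, together with the $a(\mathcal{L}_\xi g)(X,Y)$ piece, giving the stated result.

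I expect the main obstacle to be bookkeeping the $u$-dependence of $\nabla_u\xi$ correctly: unlike the fixed-field Lemmas~\ref{Lie-tang} and~\ref{Lie-hor}, here the vertical/tangential slot of $\xi^{\bar c}$ is the point-dependent field $(x,u)\mapsto \nabla_u\xi$, so every Lie-bracket $[\xi^{\bar c}, X^h]$ and $[\xi^{\bar c}, X^t]$ picks up derivatives of $\xi$ of one order higher than in the horizontal-lift case, which is precisely what manufactures the second covariant derivative $\nabla^2\xi(u,X)$ and, after antisymmetrization via the Ricci identity on a constant-curvature manifold, the curvature term $R(\xi_x,X)u$. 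I would carry out the computation in normal coordinates at $x$ to kill the Christoffel symbols at the base point, reducing the brackets to partial derivatives, and then reassemble the coordinate expressions into the invariant form claimed. Since the statement is attributed to \cite{KA}, I would present this as a verification sketch: expand $\xi^{\bar c} = \xi^h + (\iota\nabla\xi)^t$, apply bilinearity of $\mathcal{L}_\bullet\widetilde G$ together with Lemmas~\ref{Lie-tang}, \ref{Lie-hor} and Proposition~\ref{cor-LC}, collect the correction terms from the $u$-dependence using the Ricci identity in constant curvature, and simplify using $(\mathcal{L}_\xi g)(X,Y) = g(\nabla_X\xi,Y) + g(\nabla_Y\xi,X)$.
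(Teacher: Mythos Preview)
Your approach is correct and coincides with the paper's, which simply records that the lemma follows from Lemmas~\ref{Lie-tang} and~\ref{Lie-hor} applied to the decomposition $\xi^{\bar c}=\xi^h+\tilde\iota(\nabla\xi)$; your handling of the $u$-dependence of $\nabla_u\xi$ via the Leibniz rule $(\mathcal{L}_{fV}\tilde G)(A,B)=f(\mathcal{L}_V\tilde G)(A,B)+A(f)\tilde G(V,B)+B(f)\tilde G(A,V)$ together with the Ricci identity is exactly what is needed to produce the $\nabla^2\xi(u,X)$ and $R(\xi_x,X)u$ terms. One caution: the displayed identity $\mathcal{L}_{\xi^{\bar c}}\widetilde G(A,B) = A(\widetilde G(\xi^{\bar c},B)) + B(\widetilde G(A,\xi^{\bar c})) - \widetilde G([\xi^{\bar c},A],B) - \widetilde G(A,[\xi^{\bar c},B])$ is not the Lie-derivative formula (the first two terms should be replaced by the single term $\xi^{\bar c}(\widetilde G(A,B))$), but since you immediately pass to the correct covariant form $\widetilde G(\widetilde\nabla_A\xi^{\bar c},B)+\widetilde G(A,\widetilde\nabla_B\xi^{\bar c})$ and then to Lemmas~\ref{Lie-tang}--\ref{Lie-hor}, this slip does not affect the argument.
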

	
	\begin{Th}\label{Ccomplet}
		Let $(M,g)$ be a Riemannian manifold of constant sectional curvature $\kappa$, $\widetilde{G}$ be a pseudo-Riemannian $g$-natural metric of Kaluza-Klein type on $T_1M$, $\xi$ be a vector field on $M$ and $\lambda \in \mathbb{R}$. Then, $(\widetilde{G},\xi^{\bar{c}},\lambda)$ is a Ricci soliton if and only if the two following assertions holds
        \begin{enumerate}
          \item $\xi$ is a homothetic vector field on $M$, with $\mathcal{L}_{\xi}g=2\lambda_0g$, where $\lambda_0 \in \mathbb{R}$;
          \item one of the following cases occurs
		\begin{itemize}
			\item [i)] $n=2,d=0,$ $\lambda=\frac{a\kappa^2}{2(a+c)^2}$  and $\lambda_0=\kappa\frac{a\kappa-(a+c)}{(a+c)^2}$;
			\item [ii)] $n=2, d\neq 0,$ $\kappa=\frac{2\varphi-d}{a}$ and $\lambda=\lambda_0=\frac{2}{a}$;
			\item [iii)] $n\neq 2, d=0 $, $\kappa=0$ and $\lambda_0=\lambda=\frac{n-2}{a}$;
			\item  [iv)]$n\neq 2,$ $d\neq 0,$ $\kappa=-\frac{1}{a(n-2)}[(n-1)d\pm\sqrt{d^2+2n(n-2)\varphi d}]$, $\lambda=\mu$ and $\lambda_0=\mu-\nu$, where $\mu$ and $\nu$ are given by \eqref{mu-nu}.
		\end{itemize}
        \end{enumerate}
	\end{Th}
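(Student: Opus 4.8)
The plan is to substitute the Lie-derivative formulas of Lemma~\ref{Lie-complete} into the Ricci soliton system \eqref{GC3} of Proposition~\ref{RSeqG} and read off the necessary and sufficient conditions. First I would compare the $(X^t,Y^t)$-components: Lemma~\ref{Lie-complete} gives $\left(\mathcal{L}_{\xi^{\bar c}}\widetilde G\right)(X^t,Y^t)=a[(\mathcal{L}_\xi g)(X,Y)-(\mathcal{L}_\xi g)(u,u)g(X,Y)]$, which must equal $2a[\lambda-\mu]g(X,Y)$ for all $X,Y\perp u$ and all unit $u$. Polarizing in $u$ and using that $n\ge 2$ (so the orthogonal complement of $u$ is nontrivial), this forces $(\mathcal{L}_\xi g)$ to be a constant multiple of $g$, i.e. $\xi$ is homothetic with $\mathcal{L}_\xi g=2\lambda_0 g$ for some constant $\lambda_0$; substituting back yields the relation $\lambda_0-\lambda_0=\lambda-\mu$ on the part proportional to $g$ restricted to $\{u\}^\perp$, more precisely $a(2\lambda_0-2\lambda_0 g(u,u))g(X,Y)=a\cdot 2\lambda_0(1-1)=0$ is automatic, so the genuine constraint emerging here is simply that $\xi$ be homothetic, together with the scalar identity obtained after also using the other components. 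This establishes assertion~(1).

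Next I would feed $\mathcal{L}_\xi g=2\lambda_0 g$ into the remaining two components. For $(X^h,Y^h)$: the Lemma gives $(a+c)\cdot 2\lambda_0 g(X,Y)+d[2\lambda_0 g(X,u)g(Y,u)+2\lambda_0 g(X,u)g(Y,u)-2\lambda_0 g(X,u)g(Y,u)]=2(a+c)\lambda_0 g(X,Y)+2d\lambda_0 g(X,u)g(Y,u)$, which must match $2(a+c)(\lambda-\nu)g(X,Y)+2(\lambda d-\theta)g(X,u)g(Y,u)$. Separating the $g(X,Y)$ and $g(X,u)g(Y,u)$ parts gives the two scalar equations $\lambda_0=\lambda-\nu$ and $d\lambda_0=\lambda d-\theta$. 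For $(X^h,Y^t)$: since $M$ has constant curvature $\kappa$, $R(\xi,X)u=\kappa(g(X,u)\xi-g(\xi,u)X)$, and for a homothetic field on a constant-curvature space $\nabla^2\xi$ is determined (in fact $\nabla^2\xi(u,X)=-\kappa g(\xi,u)X+\dots$ — here one uses the standard identity $\nabla^2_{U,V}\xi=R(\xi,U)V$ valid for Killing fields and its homothetic analogue, or computes directly from $\nabla\xi$ being parallel up to the homothety constant). The upshot is that the $(X^h,Y^t)$-equation $ag(R(\xi,X)u+\nabla^2\xi(u,X),Y)=0$ becomes an identity automatically once $\xi$ is homothetic on a constant-curvature space, contributing no new constraint. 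Then from the combined system $\lambda-\mu=\lambda_0$, $\lambda-\nu=\lambda_0$ one gets $\mu=\nu$, and $\theta=\lambda d-d\lambda_0=d(\lambda-\lambda_0)=d\mu$; so in all cases $\lambda=\mu$ (equivalently $\lambda_0=\mu-\nu=0$ when $d$ forces it) and the data must satisfy $\mu=\nu$ and $\theta=\mu d$.

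The final step is to solve the algebraic system $\mu=\nu$, $\theta=\mu d$ in the parameters $a,c,d,\kappa,n$ using the explicit expressions \eqref{mu-nu}, and to translate the solutions into the four listed cases. Writing out $\mu-\nu=\frac{1}{2a(a+c)\varphi}[2a^2\kappa^2-2(n-1)a\varphi\kappa+2(n-2)(a+c)\varphi-2d\varphi]$, the equation $\mu=\nu$ is a quadratic in $\kappa$ (with leading coefficient $2a^2$), which I would solve by the quadratic formula; the two branches produce the $\pm$ in case~(iv). One then checks the compatibility condition $\theta=\mu d$: expanding $\theta-\mu d$ and using $\mu=\nu$ should reduce it either to an identity or to a further polynomial relation, and a short case analysis on whether $d=0$ or $d\ne 0$, and whether $n=2$ or $n\ne 2$, isolates exactly cases (i)--(iv). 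When $d=0$ the quadratic degenerates: $n=2$ forces the root $\kappa\ne 0$ of case~(i), while $n\ne 2$ forces $\kappa=0$ and $\lambda_0=\lambda=\frac{n-2}{a}$ of case~(iii). When $d\ne 0$, case~(ii) ($n=2$) gives $\kappa=(2\varphi-d)/a$ and $\lambda=\lambda_0=2/a$, and case~(iv) ($n\ne 2$) gives the displayed roots with $\lambda=\mu$, $\lambda_0=\mu-\nu$. Conversely, in each case one substitutes the parameter values back into \eqref{GC3} via Lemma~\ref{Lie-complete} to verify the soliton equations hold, which is routine. I expect the main obstacle to be the bookkeeping in the quadratic-in-$\kappa$ computation and in verifying that $\theta=\mu d$ is automatically satisfied (rather than an extra constraint) in each branch; keeping the factor $\varphi=a+c+d$ symbolic throughout and being careful with the $n=2$ degeneration is the delicate part.
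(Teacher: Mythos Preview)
Your overall strategy matches the paper's: substitute Lemma~\ref{Lie-complete} into the system~\eqref{GC3}, extract homotheticity of $\xi$, reduce to scalar constraints, and then case-split on $n=2$ vs.\ $n\neq 2$ and $d=0$ vs.\ $d\neq 0$. The treatment of the $(X^h,Y^t)$-equation via the identity $R(\xi,X)Y+\nabla^2\xi(Y,X)=0$ for homothetic $\xi$ is also exactly what the paper does.

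However, there is a genuine error in the scalar system you derive. Once $\mathcal{L}_\xi g=2\lambda_0 g$ is known, the $(X^t,Y^t)$-equation becomes
\[
a\bigl[2\lambda_0 g(X,Y)-2\lambda_0\, g(u,u)\,g(X,Y)\bigr]=0=2a[\lambda-\mu]g(X,Y),
\]
since $g(u,u)=1$ on $T_1M$. Hence the relation is $\lambda=\mu$, \emph{not} $\lambda-\mu=\lambda_0$. Combining with $\lambda-\nu=\lambda_0$ from the $(X^h,Y^h)$-component gives $\lambda_0=\mu-\nu$, while $d\lambda_0=\lambda d-\theta$ gives $\theta=\nu d$. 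So the only constraint on the metric parameters is $\theta=\nu d$, which the paper rewrites as the quadratic \eqref{eq1}:
\[
(n-2)a^2\kappa^2+2(n-1)ad\kappa-nd(2\varphi-d)=0.
\]
Your proposed system $\mu=\nu$ and $\theta=\mu d$ is strictly stronger and incorrect: in case~(i) one has $\lambda_0=\mu-\nu=\kappa\frac{a\kappa-(a+c)}{(a+c)^2}$, generically nonzero, so $\mu\neq\nu$; similarly $\lambda_0=\mu-\nu$ is nonzero in the generic branch of case~(iv). Solving your system would therefore miss legitimate solutions. The remedy is simply to correct the contribution of the $(X^t,Y^t)$-equation to $\lambda=\mu$ and solve the single constraint $\theta=\nu d$.

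A minor additional point: extracting homotheticity of $\xi$ from the $(X^t,Y^t)$-equation is awkward when $n=2$ (the orthogonal complement of $u$ is one-dimensional, so the equation gives no off-diagonal information). The paper instead takes $X=Y\perp u$ in the $(X^h,Y^h)$-equation, which immediately yields $g(\nabla_X\xi,X)=(\lambda-\nu)g(X,X)$ for every $X$ (since any $X$ is orthogonal to some unit $u$), and hence homotheticity with factor $\lambda_0=\lambda-\nu$ in one line.
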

	\begin{proof}
		Suppose that $(\widetilde{G},\xi^{\bar{c}},\lambda)$ is a Ricci soliton. \\For all $(x,u) \in TM $ and $X,Y\in M_x.$ Substituting from the first equation of Lemma \ref{Lie-complete} into the first equation of  \eqref{GC3} and taking $X = Y \perp u $, we obtain
		\begin{equation}
		g(\nabla_X\xi,X)=(\lambda-\nu)g(X,X).
		\end{equation}
		Then $\xi$ is a homothetic vector field with $\mathcal{L}_{\xi}g=2(\lambda-\nu)g$. \\ Substituting from the third equation of Lemma \ref{Lie-complete} into \eqref{GC3}, we obtain
		\begin{equation}\label{lam1}
		\lambda=\mu=\frac{1}{2a(a+c)\varphi} [a^2\kappa^2 +2(n-2)(a+c)\varphi -d^2].
		\end{equation}
        We deduce that $\xi$ is a homothetic vector field, i.e. $\mathcal{L}_{\xi}g=2\lambda_0 g$, with
        \begin{equation}\label{lam0}
          \lambda_0=\mu-\nu.
        \end{equation}
		Substituting from the first equation of Lemma \ref{Lie-complete} into the first equation of  \eqref{GC3}, we obtain
		\begin{equation}\label{theta}
		\nu d=\theta.
		\end{equation}
		Thus
		\begin{equation*}
		(n-2)a^2\varphi \kappa^2 +2(n-1)ad\varphi \kappa +d[d^2-(n+1)d\varphi+(a+c)(d-2n\varphi)]=0,
		\end{equation*}
		i.e,
		\begin{equation}\label{eq1}
		(n-2)a^2 \kappa^2 +2(n-1)ad \kappa -nd(2\varphi-d)=0.
		\end{equation}
		Then we can distinguish the following situations:\\
        \begin{itemize}
          \item [i)] $n=2$ and $d=0$: substituting into \eqref{mu-nu}, we get $\mu=\frac{a}{2}(\frac{\kappa}{a+c})^2$ and $\nu=\frac{\kappa}{2(a+c)^2}[-a\kappa+2(a+c)]$.
Then we have, by virtue of \eqref{lam1} and \eqref{lam0}, $\lambda=\frac{a\kappa}{2(a+c)^2}$ and $\lambda_0=\frac{\kappa}{(a+c)^2}[a\kappa -(a+c)]$.
          \item [ii)] $n=2$ and $d\neq 0$: \eqref{eq1} yields $\kappa=\frac{2\varphi-d}{a}$. Substituting into \eqref{mu-nu}, we get $\mu=\frac{2}{a}$ and $\nu=0$. Replacing in \eqref{lam1} and \eqref{lam0}, we obtain $\lambda=\lambda_0=\frac{2}{a}$.
          \item [iii)] $n \neq 2$ and $d=0$: then we have, from \eqref{theta}, $\theta=0$. Using \eqref{mu-nu}, we obtain $\kappa=0$ and thus substituting into \eqref{mu-nu}, we get $\mu=(n-2) \frac{a+c}{a\varphi}$ and $\nu=0$. Replacing in \eqref{lam1} and \eqref{lam0}, we obtain $\lambda=\lambda_0=\frac{n-2}{a}$.
          \item [iv)] $n \neq 2$ and $d\neq 0$: from equation \eqref{eq1} we obtain
		\begin{equation}
		[\kappa+\frac{(n-1)d}{(n-2)a}]^2=\frac{d}{(n-2)^2a^2}[d^2+2n(n-2)\varphi d].
		\end{equation}
		We deduce that $\kappa=-\frac{1}{a(n-2)}[(n-1)d\pm\sqrt{d^2+2n(n-2)\varphi d}]$.
        \end{itemize}

Conversely, by a routine calculation, we can check that in any case $i)$-$iv)$ of 2. of the theorem, we get
\begin{equation}\label{cond1}
  \lambda=\mu, \quad \lambda -\nu= \lambda_0, \quad \lambda d- \theta= \lambda_0 d.
\end{equation}
On the other hand, it is well known that, for any homothetic vector field $\xi$ on $M$, we have
\begin{equation}\label{hom-curv}
  R(\xi,X)Y+\nabla^2\xi(Y,X)=0,
\end{equation}
for any $X, Y \in \mathfrak{X}(M)$. Substituting from the identity $\mathcal{L}_{\xi}g=2\lambda_0g$ and \eqref{hom-curv} into Lemma \ref{Lie-complete}, and using \eqref{cond1}, we obtain
\begin{equation*}
		\begin{split}
		\left(\mathcal{L}_{\xi^{\bar{c}}}\tilde{G}\right)_{(x,u)}(X^h,Y^h)= & 2\lambda_0(a+c)g(X,Y) +2\lambda_0dg(Y,u)g(X,u)\\
		=&2(a+c)(\lambda-\nu) g(X,Y)+2(\lambda d-\theta)g(X,u)g(Y,u), \\
		&  \\
		\left(\mathcal{L}_{\xi^{\bar{c}}}\tilde{G}\right)_{(x,u)}(X^h,Y^t)= & 0,\\ \\
		\left(\mathcal{L}_{\xi^{\bar{c}}}\tilde{G}\right)_{(x,u)}(X^t,Y^t)= & 0= 2a[\lambda-\mu]g(X,Y),
		\end{split}
		\end{equation*}
which is no other than \eqref{GC3}, i.e. $(\widetilde{G},\xi^{\bar{c}},\lambda)$ is a Ricci soliton.
	\end{proof}
\begin{Rks}
Let $(M,g)$ be a Riemannian manifold of constant sectional curvature $\kappa$, $\tilde{G}$ be a pseudo-Riemannian $g$-natural metric on $T_1M$ of Kaluza-Klein type, $\xi$ be a vector field on $M$ and $\lambda\in \mathbb{R} $.
	\begin{enumerate}		
		\item $(T_1M,\tilde{G})$ is an Einstein manifold if and only if $\xi$ is a Killing vector field on $M$, i.e. $\lambda_0=0$.
		\item For $n=2$ and $d=0$, $(\tilde{G},\xi^{\bar{c}},\lambda)$ is shrinking, steady or expanding according to $a< 0$, $\kappa=0$ or $a> 0$, respectively.
		\item For  $n= 2$  and $d\neq 0$, $(\tilde{G},\xi^{\bar{c}},\lambda)$ is shrinking or expanding according to $a< 0$ or $a> 0$, respectively.
        \item For  $n\neq 2$  and $d= 0$, $(\tilde{G},\xi^{\bar{c}},\lambda)$ is shrinking or expanding according to $a< 0$ or $a> 0$, respectively.
		\item For $n\neq 2$ and $d\neq 0$, $(\tilde{G},\xi^{\bar{c}},\lambda)$ is shrinking, steady, or expanding according to $\mu$ is negative, zero or positive, respectively.
		\item In the case 2. iv) of Theorem \ref{Ccomplet}, we should have $d^2+2n(n-2)\varphi d \geq 0$, i.e. one of the following situations occurs
		\begin{itemize}
			\item $a+c< 0$ and $d\in ]-\infty,0] \cup [\frac{-2n(n-2)(a+c)}{2n(n-2)+1},+\infty[$,
			\item $a+c> 0$ and $d\in ]-\infty, \frac{-2n(n-2)(a+c)}{2n(n-2)+1}] \cup [0,+\infty[$.
		\end{itemize}
	\end{enumerate}
\end{Rks}


\subsection{Necessary conditions on potential vector fields}


	 Since $T_1M$ is a closed submanifold of $TM$, then for any vector field $V$ on $T_1M$, there is a vector field $\overline{V}$ on $TM$ which extends $V$. For a local coordinates system $(U,x^1,...,x^n)$ of $M$, $\overline{V}$ can be expressed in the induced coordinates system of $TM$ as
	\begin{equation*}
	\begin{array}{l}
	\overline{V}\upharpoonright_{p^{-1}(U)}=A^i \left(\frac{\partial}{\partial x^i}\right)^h +B^i \left(\frac{\partial}{\partial x^i}\right)^v,
	\end{array}
	\end{equation*}
	where $A^i$ and $B^i$ are smooth functions on $p^{-1}(U)$. Since $\overline{V}$ is tangent to $T_1M$ at any point of $T_1M$,  we should have $g_{ij}(x)B^i(x,u)u^j=0$, for all $(x,u) \in p_1^{-1}(U)=p^{-1}(U) \cap T_1M$ expressed locally as $u =u^i\frac{\partial}{\partial x^i}$, i.e.,
	\begin{equation}\label{GC1}
	B^i(x,u)u_i=0,
	\end{equation}
	where we put $u_i:=g_{ij}(x)u^j$. If we denote the restrictions to $p_1^{-1}(U)$ of $A^i$ and $B^i$ by the same notations, then we can write
	\begin{equation}\label{GC2}
	\begin{array}{l}
	V\upharpoonright_{p_1^{-1}(U)}=A^i \left(\frac{\partial}{\partial x^i}\right)^h +B^i \left(\frac{\partial}{\partial x^i}\right)^t.
	\end{array}
	\end{equation}
	Using Lemmas \ref{Lie-tang} and \ref{Lie-hor}, we can prove the following
	\begin{Lem}\label{gen-case}
		For all $(x, u)\in T_1M, $ $X, Y\in M_x, $ we have
		\begin{equation*}
	\begin{split}
	\left(\mathcal{L}_{V}\widetilde{G}\right)_{(x,u)}(X^h,Y^h)= &(a+c)\left[A^i \left(\mathcal{L}_{\frac{\partial}{\partial x^i}}g\right)_x(X,Y) \right.\\ &\left.+X^h(A^i)g(\left.\frac{\partial}{\partial x^i}\right|_x,Y)+Y^h(A^i)g(X,\left.\frac{\partial}{\partial x^i}\right|_x)\right]\\
	&+d\left\{B^i\left[g(X,\left.\frac{\partial}{\partial x^i}\right|_x)g(Y,u)+g(\left.\frac{\partial}{\partial x^i}\right|_x,Y)g(X,u)\right]\right.\\
	& +A^i\left[g(\nabla_X \frac{\partial}{\partial x^i},u)g(Y,u)+g(\nabla_Y \frac{\partial}{\partial x^i},u)g(X,u)\right]\\
& \left.+X^h(A^i)g(\left.\frac{\partial}{\partial x^i}\right|_x,u)g(Y,u)+Y^h(A^i)g(\left.\frac{\partial}{\partial x^i}\right|_x,u)g(X,u)\right\}, \\
	& \\
	\left(\mathcal{L}_{V}\widetilde{G}\right)_{(x,u)}(X^h,Y^t)= & a\left[X^h(B^i)g(\left.\frac{\partial}{\partial x^i}\right|_x,Y)+B^ig(\nabla_{X}\frac{\partial}{\partial x^i},Y)\right.\\
    & \left.+kA^i\left(g(X,u)g(\left.\frac{\partial}{\partial x^i}\right|_x,Y)-g(X,Y)g(\left.\frac{\partial}{\partial x^i}\right|_x,u)\right)\right]\\
    &+(a+c)Y^t(A^i)g(X,\left.\frac{\partial}{\partial x^i}\right|_x)+dY^t(A^i)g(X,u)g(\left.\frac{\partial}{\partial x^i}\right|_x,u),\\
	& \\
	\left(\mathcal{L}_{V}\widetilde{G}\right)_{(x,u)}(X^t,Y^t)= &  a\left[Y^t(B^i)g(X,\left.\frac{\partial}{\partial x^i}\right|_x)+X^t(B^i)g(Y,\left.\frac{\partial}{\partial x^i}\right|_x)\right].
	\end{split}
	\end{equation*}
		\end{Lem}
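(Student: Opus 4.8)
The plan is to reduce everything to the two ``pure lift'' formulas of Lemmas~\ref{Lie-tang} and~\ref{Lie-hor} by means of the Leibniz rule for the Lie derivative. By~\eqref{GC2}, on $p_1^{-1}(U)$ we may write $V=\sum_i\bigl[A^i(\partial_i)^h+B^i(\partial_i)^t\bigr]$, where $\partial_i:=\partial/\partial x^i$, $A^i,B^i\in C^\infty(p_1^{-1}(U))$, and $B^iu_i=0$ by~\eqref{GC1}. Since the Lie derivative of a tensor is additive in the differentiating vector field, it suffices to compute each $\mathcal{L}_{A^i(\partial_i)^h}\widetilde G$ and $\mathcal{L}_{B^i(\partial_i)^t}\widetilde G$. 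For a symmetric $(0,2)$-tensor $T$, a vector field $W$ and a function $f$, the identity $[fW,E]=f[W,E]-(Ef)W$ yields
\[
(\mathcal{L}_{fW}T)(E,F)=f\,(\mathcal{L}_WT)(E,F)+(Ef)\,T(W,F)+(Ff)\,T(E,W).
\]
Taking $W=(\partial_i)^h$, resp.\ $W=(\partial_i)^t$, reduces the computation to the pure lift Lie derivatives $\mathcal{L}_{(\partial_i)^h}\widetilde G$ and $\mathcal{L}_{(\partial_i)^t}\widetilde G$---furnished by Lemmas~\ref{Lie-hor} and~\ref{Lie-tang} with $\xi=\partial_i$---together with the contractions $\widetilde G((\partial_i)^h,\cdot)$ and $\widetilde G((\partial_i)^t,\cdot)$, which are read off from Theorem~\ref{nat-met-unit} specialized to the Kaluza--Klein type case $b=0$.

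Next I would evaluate the assembled expression on the three kinds of pairs $(X^h,Y^h)$, $(X^h,Y^t)$, $(X^t,Y^t)$ with $X,Y\in M_x$ subject to Convention~\ref{conv}, and simplify. Three facts do the bulk of the work. Because $\widetilde G$ is of Kaluza--Klein type, $\widetilde G((\partial_i)^h,Y^t)=\widetilde G(X^h,(\partial_i)^t)=0$, so every directional-derivative term pairing a horizontal slot with a tangential one disappears. The constraint $B^iu_i=0$ annihilates every term in which $B^i$ is contracted against $g(\partial_i,u)$---in particular the terms of Lemma~\ref{Lie-tang} carrying the factor $g(\xi_x,u)$, both in the $(X^h,Y^h)$ and in the $(X^t,Y^t)$ slot. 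And Convention~\ref{conv} ($X,Y\perp u$) simplifies the lift formulas of Lemmas~\ref{Lie-tang} and~\ref{Lie-hor} and gives $\widetilde G((\partial_i)^t,Y^t)_{(x,u)}=a\,g(\partial_i,Y)$. Using in addition $g(\nabla_X\partial_i,Y)+g(\nabla_Y\partial_i,X)=(\mathcal{L}_{\partial_i}g)(X,Y)$ and relation~\eqref{Pr4} to evaluate the derivatives of $g(\partial_i,\cdot)$ along the tangential directions, one collects precisely the three displayed formulas.

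The argument is entirely mechanical; the only real effort is bookkeeping. One has to keep straight which contraction terms survive after setting $b=0$, remember that $(\partial_i)^t$ is a point-dependent lift (so that, for example, its inner product with $Y^t$ equals $a\,g(\partial_i,Y)$ only after the $g(\partial_i,u)g(Y,u)$ piece is discarded using $Y\perp u$), and carry the constraint $B^iu_i=0$ throughout---although, as it turns out, it enters only in undifferentiated form. I expect the main, and rather mild, obstacle to be nothing more than arranging the dozen-odd terms so that these cancellations are transparent.
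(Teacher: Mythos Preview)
Your proposal is correct and follows exactly the route the paper indicates: the paper simply states ``Using Lemmas~\ref{Lie-tang} and~\ref{Lie-hor}, we can prove the following'' without further detail, and your argument---applying the Leibniz rule $\mathcal L_{fW}T(E,F)=f\,\mathcal L_WT(E,F)+(Ef)T(W,F)+(Ff)T(E,W)$ to the decomposition $V=A^i(\partial_i)^h+B^i(\partial_i)^t$ and then invoking those two lemmas with $\xi=\partial_i$---is precisely how one carries this out. Your bookkeeping observations (vanishing of mixed horizontal/tangential contractions since $b=0$, the role of $B^iu_i=0$ in killing the $g(\xi,u)$ terms, and the use of Convention~\ref{conv}) are all on point.
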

	We put $W=V-A^l(\frac{\partial}{\partial x^l})^h$. $W$ is a fiber-preserving vector field, and we have, from Lemma \ref{gen-case}, $\mathcal{L}_{W}\widetilde{G}(X^t,Y^t)= \mathcal{L}_{V}\widetilde{G}(X^t,Y^t)$. Using \eqref{GC3}, we deduce that
	\begin{equation}
	\mathcal{L}_{W}\widetilde{G}(X^t,Y^t) =2[\lambda-\mu]\tilde{G}(X^t,Y^t),
	\end{equation}
	In the same way, we remark that
    \begin{equation}\label{Lie-Sasaki}
      \mathcal{L}_{W}\widetilde{g}^s(X^t,Y^t) =2[\lambda-\mu]\tilde{g}^s(X^t,Y^t).
    \end{equation}

	We shall extend fiber-preserving vector fields on $T_1M$ to vector fields on $TM,$ using the technics used in \cite{Kon1}. Let $Z$ be a fiber-preserving vector field on $T_1M.$ Then it is known that $Z$ is projectable to a vector field $\underline{Z}$ on $M,$ i.e. such that $(dp_1)_u(Z_u) = \underline{Z}_x,$ for all $x\in M $ and $u\in S_xM. $ For all $r > 0,$ let us define the immersions $j_r : T_1M\rightarrow TM$ and $j_0 : M \rightarrow TM,$ respectively, by $j_r(u) = ru,$ for all $u\in T_1M, $ and $j_0(x) = 0_x,$ for all $x\in M, $ where $0_x$ denotes the zero vector in $M_x.$ We define a vector field $\overline{\overline{Z}}$ on $TM$ extending $Z$ by (cf. \cite{Kon1})
	\begin{equation*}
	\overline{\overline{Z}}_{ru}:=\left\{\begin{array}{ll}
	(dj_r)_u(Z_u), & \quad \textup{ for } r>0, \\
	(dj_0)_x(\underline{Z}_x), &\quad \textup{ for } r=0,
	\end{array}
	\right.
	\end{equation*}
for all $x \in M$ and $u \in S_xM:=M_x \cup T_1M$. We denote by $\overline{Z}$ the restriction of $\overline{\overline{Z}}$ to $TM\setminus \sigma_0,$ which is clearly a vector field on $TM\setminus \sigma_0,$, where $\sigma_0 := j_0(M)$ is the zero section of $TM.$
\begin{Lem}\label{Lem-con1}\cite{KA}
	Let $Z$, $X$ and $Y$ be fiber-preserving vector fields on $T_1M$. Then
	\begin{equation}\label{rel-Lie}
	\left(\mathcal{L}_{\overline{Z}}g^s(\overline{X},\overline{Y})\right)_{ru}=(1-r^2)(a+c)\left(\mathcal{L}_{\underline{Z}}g(\underline{X},\underline{Y})\right)_{x} +r^2\left(\mathcal{L}_Z\tilde{g}^s(X,Y)\right)_u,
	\end{equation}
	for all $x \in M$, $u \in S_xM$ and $r>0$.
\end{Lem}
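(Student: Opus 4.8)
The plan is to reduce the identity to a combination of a Lie derivative on $M$ and a Lie derivative on $T_1M$, by exploiting the radial product structure of $TM\setminus\sigma_0$. Let $m_r:TM\to TM$ denote the homothety $v\mapsto rv$, so that $j_r=m_r|_{T_1M}$ and $\Phi:(0,\infty)\times T_1M\to TM\setminus\sigma_0$, $(r,u)\mapsto ru$, is a diffeomorphism. First I would record how lifts transform under $m_r$: from the coordinate expressions \eqref{lift2}--\eqref{lift3} one checks that $(dm_r)(X^h_v)=X^h_{rv}$ and $(dm_r)(X^v_v)=r\,X^v_{rv}$ for every $X\in M_x$. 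Hence, if $V$ is a fiber-preserving vector field on $T_1M$, written at $u\in T_1M$ (with $x=p_1(u)$) as $V_u=\underline{V}^h_u+Q^v_u$ where $\underline{V}=dp_1(V)$ and $Q\in\{u\}^{\perp}$, then $\overline{V}_{ru}=\underline{V}^h_{ru}+r\,Q^v_{ru}$. In particular $\overline{V}$ is tangent to every radius-$r$ sphere bundle, and under $\Phi$ it corresponds to the vector field $(0,V)$ on $(0,\infty)\times T_1M$, whose $\partial_r$-component vanishes and whose $T_1M$-component does not depend on $r$.

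Two facts then follow at once. First, fiber-preserving vector fields are $p_1$-projectable and the bracket of projectable vector fields is projectable, so $[Z,X]$ is again fiber-preserving with $\underline{[Z,X]}=[\underline{Z},\underline{X}]$; moreover, since $\overline{Z}$ and $\overline{X}$ are $r$-independent in the above sense, $[\overline{Z},\overline{X}]=\overline{[Z,X]}$, and similarly for $[\overline{Z},\overline{Y}]$. Second, combining the transformation rule for lifts with the explicit form of $g^s$, a short computation gives the \emph{key pointwise identity}: for fiber-preserving $X,Y$,
\[
g^s_{ru}(\overline{X},\overline{Y})=(1-r^2)(a+c)\,g_x(\underline{X},\underline{Y})+r^2\,\tilde{g}^s_u(X,Y),
\]
that is, the function $g^s(\overline{X},\overline{Y})$ on $TM\setminus\sigma_0$ equals $(1-r^2)(a+c)$ times the pullback by $p$ of the function $g(\underline{X},\underline{Y})$ on $M$, plus $r^2$ times the pullback by $\Phi$ of the function $\tilde{g}^s(X,Y)$ on $T_1M$.

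To conclude, I would expand $\mathcal{L}_{\overline{Z}}g^s(\overline{X},\overline{Y})=\overline{Z}\big(g^s(\overline{X},\overline{Y})\big)-g^s([\overline{Z},\overline{X}],\overline{Y})-g^s(\overline{X},[\overline{Z},\overline{Y}])$. Applying $\overline{Z}$ to the key identity and using that $\overline{Z}$ annihilates functions of $r$ alone, differentiates $p$-pullbacks through $\underline{Z}$ and $T_1M$-functions through $Z$, the first term is $(1-r^2)(a+c)\,\underline{Z}\big(g(\underline{X},\underline{Y})\big)_x+r^2\,Z\big(\tilde{g}^s(X,Y)\big)_u$. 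For the two bracket terms, use the first fact to replace $[\overline{Z},\overline{X}]$ by $\overline{[Z,X]}$ and $[\overline{Z},\overline{Y}]$ by $\overline{[Z,Y]}$, and then apply the key identity again; collecting the coefficients of $(1-r^2)(a+c)$ reassembles $\left(\mathcal{L}_{\underline{Z}}g\right)_x(\underline{X},\underline{Y})$ and the coefficients of $r^2$ reassemble $\left(\mathcal{L}_Z\tilde{g}^s\right)_u(X,Y)$, which is exactly the claimed formula.

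I expect the bookkeeping in the first paragraph to be the main obstacle: one must keep track of the fact that the extension $V\mapsto\overline{V}$ rescales only the vertical part, by the factor $r$, and leaves the horizontal part unchanged, since this asymmetry is precisely what produces the $(1-r^2)$ versus $r^2$ weighting in the key identity. Everything afterwards is formal once the product picture $TM\setminus\sigma_0\cong(0,\infty)\times T_1M$ is in place. An equivalent but heavier alternative is to expand both sides in induced coordinates on $TM$ and invoke Lemmas \ref{Lie-tang} and \ref{Lie-hor} directly.
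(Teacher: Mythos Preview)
The paper does not prove this lemma itself; it is quoted from the companion paper \cite{KA}, so there is no in-paper proof to compare against. Your strategy---using the diffeomorphism $\Phi:(0,\infty)\times T_1M\to TM\setminus\sigma_0$, observing that $\overline{V}$ corresponds to the $r$-independent field $(0,V)$, and then expanding the Cartan formula $\mathcal{L}_{\overline{Z}}g^s(\overline{X},\overline{Y})=\overline{Z}(g^s(\overline{X},\overline{Y}))-g^s(\overline{[Z,X]},\overline{Y})-g^s(\overline{X},\overline{[Z,Y]})$ term by term---is sound and efficient. The bracket identity $[\overline{Z},\overline{X}]=\overline{[Z,X]}$ follows exactly as you say from the product picture, and the action of $\overline{Z}$ on functions of $r$, on $p$-pullbacks, and on $T_1M$-pullbacks is correctly identified.

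There is, however, one point to flag in your ``key pointwise identity''. Carrying out the computation you describe with the standard Sasaki metric (so $g^s(X^h,Y^h)=g(X,Y)$, $g^s(X^v,Y^v)=g(X,Y)$, cross terms zero) and $\overline{X}_{ru}=\underline{X}^h_{ru}+rP^v_{ru}$, $\overline{Y}_{ru}=\underline{Y}^h_{ru}+rR^v_{ru}$, one gets
\[
g^s_{ru}(\overline{X},\overline{Y})=g(\underline{X},\underline{Y})+r^2 g(P,R)=(1-r^2)\,g(\underline{X},\underline{Y})+r^2\,\tilde g^s_u(X,Y),
\]
with coefficient $1$, not $(a+c)$, in front of the first summand. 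The factor $(a+c)$ in the stated formula appears to be an artifact (possibly inherited from a more general version in \cite{KA} written for $\tilde G$ rather than $g^s$); it does not arise from the Sasaki metric computation, and you should not claim that your ``short computation'' produces it. This discrepancy is harmless for the paper's application, since there the lemma is invoked only with $Z=W$ vertical, so $\underline{Z}=0$ and the first term drops out---but in your write-up you should either prove the identity with coefficient $1$ and note the discrepancy with the stated constant, or explain where the $(a+c)$ is supposed to come from.
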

Now, fixing $u \in S_xM$  and taking, in (\ref{rel-Lie}),  $Z=\overline{W}$ and $X=Y=T^t$, for $T \in \mathfrak{X}(M)$ such that $0 \neq T_x \perp u$, and using Lemma \ref{Lem-con1}, we obtain
\begin{equation*}
\mathcal{L}_{\overline{W}_{ru}}g^s(T^v_{ru},T^v_{ru})=\mathcal{L}_{Z_u}\widetilde{g^s}(T^v_u,T^v_u).
\end{equation*}
Since, by \eqref{Lie-Sasaki}, we have $\mathcal{L}_{Z_u}\widetilde{g^s}(T^v_u,T^v_u)=2[\lambda-\mu]g(T_x,T_x)$, then
\begin{equation*}
\mathcal{L}_{\overline{W}_{ru}}g^s(T^v_{ru},T^v_{ru})=2[\lambda-\mu]g(T_x,T_x),
\end{equation*}
for all $r>0$. When $r \rightarrow 0$, we have by continuity
\begin{equation*}
\mathcal{L}_{\overline{\overline{Z}}_{0}}g^s(T^v_{0},T^v_{0})=2[\lambda-\mu]g(T_x,T_x).
\end{equation*}
But since $\overline{\overline{Z}}_{0}=0$, then $2[\lambda-\mu]g(T_x,T_x)=0$, and consequently
\begin{equation}
\lambda=\mu=\frac{1}{2a(a+c)\varphi} [a^2\kappa^2 +2(n-2)(a+c)\varphi -d^2]
\end{equation}
and $\mathcal{L}_{\overline{\overline{Z}}}g^s(T^v_{ru},T^v_{ru})=0$. Using the third identity of Lemma \ref{Lie-Z} for $G=g^s$, we obtain easily, as in the proof of Theorem \ref{hom-flat},
\begin{equation}\label{B-loc}
  B^i=Q_{j}^{i}u^j+\zeta^i, \quad i=1,...,n,
\end{equation}
where $Q_{i}^{j}, \zeta^l$ are smooth functions such that
\begin{equation}\label{sym-Q}
  g_{ik}Q_{j}^{k}+g_{jk}Q_{i}^{k}=0, \quad i,j=1,...,n.
\end{equation}
Moreover since $B^iu_i=0$, then $Q_{j}^{i}u_ju^i+\zeta^iu_i=0$. Replacing $u$ by $-u$ in the last identity, we deduce that $\zeta^i=0$ for all $i=1,...,n,$ and \eqref{B-loc} becomes $B^i=Q_{j}^{i}u^j$. Considering the $(1,1)$-tensor field $Q$ on $M$ whose components are $Q^{i}_{j}$, then we have $W_{(x,u)}=Q(u)$, for all $(x,u) \in T_1M$, or equivalently $t\{V\}=\tilde{\iota} (Q)$, with $Q$ is skew-symmetric. This completes the proof of the following:
\begin{Pro}\label{tang-V}
 Let $(M,g)$ be a Riemannian manifold of constant sectional curvature $\kappa$, $\tilde{G}$ be a psuedo-Riemannian $g$-natural metric on $T_1M$ of Kaluza-Klein type, $V$ be a vector field on $T_1M$ and $\lambda\in \mathbb{R} $. If $(V,\tilde{G},\lambda)$ is a Ricci soliton on $T_1M$ then we have
 \begin{enumerate}
 	\item $\lambda=\frac{1}{2a(a+c)\varphi} [a^2\kappa^2 +2(n-2)(a+c)\varphi -d^2]$;
 	\item $t\{V\}=\tilde{\iota} (Q)$, where $Q$ is a skew-symmetric $(1,1)$-tensor field.
 \end{enumerate}
\end{Pro}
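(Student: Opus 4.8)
The plan is to separate off the part of $V$ lying in the tangential distribution, to read off from the soliton equations that this part is a Killing‑type object on each fibre, and then to transport that information down to the zero section of $TM$, where the natural extension of a fibre‑preserving field is forced to vanish; the vanishing there fixes $\lambda$, and the algebraic shape of $Q$ drops out of the surviving first‑order system.

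First I would write $V$ in an induced chart as $V = A^i(\partial/\partial x^i)^h + B^i(\partial/\partial x^i)^t$, tangency to $T_1M$ being exactly the relation $B^iu_i=0$ from \eqref{GC1}, and set $W := V - A^l(\partial/\partial x^l)^h$, a fibre‑preserving vector field. By Lemma \ref{gen-case} the $(X^t,Y^t)$‑component of the Lie derivative involves only the $B^i$, so $\mathcal{L}_W\widetilde{G}(X^t,Y^t) = \mathcal{L}_V\widetilde{G}(X^t,Y^t)$, and the third equation of \eqref{GC3} turns this into $\mathcal{L}_W\widetilde{G}(X^t,Y^t) = 2(\lambda-\mu)\widetilde{G}(X^t,Y^t)$; the same identity holds with $\widetilde{g^s}$ in place of $\widetilde{G}$ (it is the Sasaki case $a=1$ of the same formula).

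Next I would extend the fibre‑preserving field $W$ to a vector field $\overline{\overline{W}}$ on $TM$ by Kon's construction recalled above, which is continuous along the zero section $\sigma_0$ with $\overline{\overline{W}}_{0_x}=0$, and write $\overline{W}$ for its restriction to $TM\setminus\sigma_0$. Applying Lemma \ref{Lem-con1} with $Z=\overline{W}$ and $X=Y=T^t$, for a fixed $(x,u)\in T_1M$ and $T\in\mathfrak{X}(M)$ with $0\neq T_x\perp u$, the base‑manifold term in \eqref{rel-Lie} vanishes and one obtains $\mathcal{L}_{\overline{W}_{ru}}g^s(T^v_{ru},T^v_{ru}) = \mathcal{L}_{W_u}\widetilde{g^s}(T^v_u,T^v_u) = 2(\lambda-\mu)g(T_x,T_x)$ for all $r>0$. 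Letting $r\to 0$, continuity forces the left‑hand side to $\mathcal{L}_{\overline{\overline{W}}_{0_x}}g^s(\cdot,\cdot)=0$, so $\lambda=\mu$; inserting the value of $\mu$ from \eqref{mu-nu} gives the first assertion, and moreover $\mathcal{L}_{\overline{\overline{W}}}g^s(T^v,T^v)\equiv 0$ on all of $TM$.

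Finally I would feed $\mathcal{L}_{\overline{\overline{W}}}g^s(X^v,Y^v)=0$ into the third identity of Lemma \ref{Lie-Z} taken for $G=g^s$ (no flatness is needed here, since for $g^s$ that identity only involves the $B^i$ and their vertical derivatives). This yields the Killing‑type system $\partial B_i/\partial u^j + \partial B_j/\partial u^i = 0$, and the usual second‑derivative argument --- exactly as in the proof of Theorem \ref{hom-flat} --- makes $B^i$ affine in $u$, say $B^i = Q^i_ju^j + \zeta^i$ with $g_{ik}Q^k_j + g_{jk}Q^k_i = 0$, i.e.\ $Q$ skew‑symmetric. Substituting this into the constraint $B^iu_i=0$ and replacing $u$ by $-u$ separates the even and odd parts in $u$ and forces $\zeta=0$, whence $W_{(x,u)}=Q(u)$ on $T_1M$, that is $t\{V\}=\tilde{\iota}(Q)$ with $Q$ skew‑symmetric, the second assertion. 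The delicate point is the passage to the zero section: one must know that Kon's extension is genuinely well behaved at $\sigma_0$ and vanishes there, which is precisely what Lemma \ref{Lem-con1} guarantees; the rest is routine manipulation of the $g$‑natural Lie‑derivative formulas.
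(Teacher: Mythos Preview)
Your proof follows the paper's argument essentially step by step: split off the tangential part $W=V-A^l(\partial_l)^h$, use Lemma~\ref{gen-case} to reduce to the Sasaki identity \eqref{Lie-Sasaki}, extend $W$ via Konno's construction and pass to the zero section through Lemma~\ref{Lem-con1} to force $\lambda=\mu$, and then solve the resulting first-order system as in Theorem~\ref{hom-flat} to obtain $B^i=Q^i_ju^j$ with $Q$ skew-symmetric, the parity trick $u\mapsto -u$ killing the affine term. Your parenthetical remark that the flatness hypothesis of Lemma~\ref{Lie-Z} is immaterial for the $(X^v,Y^v)$ identity under $g^s$ is correct and makes explicit a point the paper leaves tacit.
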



\subsection{Case when the potential vector field is fiber-preserving}


In this section, we shall give necessary and sufficient conditions for the unit tangent bundle of a constant curvature Riemannian manifold, endowed with a pseudo-Riemannian Kaluza-Klein type metric, to be a Ricci soliton with a fiber preserving potential vector field.

\begin{Th}
   Let $(M,g)$ be an $n$-dimensional Riemannian manifold of constant sectional curvature $\kappa$, $\tilde{G}$ be a pseudo-Riemannian $g$-natural metric on $T_1M$ of Kaluza-Klein type, $V$ be a fiber-preserving vector field on $T_1M$ and $\lambda \in \mathbb{R}$. Then $(\tilde{G},V,\lambda)$ is a  Ricci soliton on $T_1M$ if and only if there are:
   \begin{enumerate}
     \item A conformal vector field $\xi$ on $M$, with $\mathcal{L}_{\xi}g=2 \lambda_0 g$;
     \item A parallel $(1,1)$-tensor field $P$ on $M$, with $P -(\lambda-\nu) I$ is skew-symmetric;
   \end{enumerate}
   such that one of the following cases occurs:
   \begin{itemize}
			\item [i)] $n=2$, $d=0$, $\lambda=\frac{ak^2}{2(a+c)^2}$, $\lambda_0=k\frac{ak-(a+c)}{(a+c)^2}$ and $V=\xi^{\bar{c}}+\tilde{i}P$;
			\item [ii)] $n=2, d\neq 0,$ $k=\frac{2\varphi-d}{a}$, $\lambda=\lambda_0=\frac{2}{a}$ and $V=\xi^{\bar{c}}$;
			\item [iii)] $n\neq 2$, $d=0 $, $k=0$, $\lambda_0=\lambda=\frac{n-2}{a}$ and $V=\xi^{\bar{c}}+\tilde{i}P$;
			\item  [iv)]$n\neq 2,$ $d\neq 0,$ $k=-\frac{1}{a(n-2)}[(n-1)d\pm\sqrt{d^2+2n(n-2)\varphi d}]$, $\lambda=\mu$, $\lambda_0=\mu-\nu$ and $V=\xi^{\bar{c}}$;
		\end{itemize}
where $I$ is the identity $(1,1)$-tensor field on $M$ and $\mu$ and $\nu$ are given by \eqref{mu-nu}.
   \end{Th}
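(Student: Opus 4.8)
The plan is to combine the two structural facts already available for an arbitrary potential vector field. By Proposition~\ref{RSeqG}, $(\widetilde G,V,\lambda)$ is a Ricci soliton exactly when the three equations~\eqref{GC3} hold, and by Proposition~\ref{tang-V} this already forces $\lambda=\mu$ and $t\{V\}=\tilde{\iota}(Q)$ for some skew-symmetric $(1,1)$-tensor field $Q$ on $M$. Since $V$ is fiber-preserving it projects to a vector field $\xi:=\underline V$ on $M$, and because $dp_1$ annihilates the tangential part and sends $X^h$ to $X$, the horizontal component of $V$ must be $\xi^h$; hence $V=\xi^h+\tilde{\iota}(Q)$ on $T_1M$, i.e. $V$ is of the form~\eqref{GC2} with $A^i=\xi^i$ depending only on $x$ and $B^i=Q^i_j u^j$. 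The whole problem then reduces to unwinding the first two equations of~\eqref{GC3} by means of Lemma~\ref{gen-case}.

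For the first equation I would expand $(\mathcal L_V\widetilde G)(X^h,Y^h)$ using Lemma~\ref{gen-case}, reading off the covariant-derivative combinations with~\eqref{Pr1}--\eqref{Pr3}; in particular $X^h(A^i)g(\partial_i,Y)+A^i g(\nabla_X\partial_i,Y)=g(\nabla_X\xi,Y)$ and $X^h(B^i)g(\partial_i,Y)+B^i g(\nabla_X\partial_i,Y)=g((\nabla_X Q)(u),Y)$, so that the $(a+c)$-line becomes $(a+c)(\mathcal L_\xi g)(X,Y)$. Choosing $X=Y\perp u$ kills every $d$- and $Q$-contribution and leaves $(a+c)g(\nabla_X\xi,X)=(a+c)(\lambda-\nu)g(X,X)$; letting $u$ vary shows that $\xi$ is homothetic, $\mathcal L_\xi g=2\lambda_0 g$ with $\lambda_0=\lambda-\nu$ (which gives in particular the conformal vector field in the statement). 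Feeding $\mathcal L_\xi g=2\lambda_0 g$ back and comparing the coefficients of $g(X,Y)$ and of $g(X,u)g(Y,u)$: if $d=0$, the choice $X=Y=u$ forces $\theta=0$, whence by~\eqref{mu-nu} one has $n=2$ or $\kappa=0$; if $d\neq 0$, the choice $X=Y=u$ gives $\theta=\nu d$ (this is~\eqref{theta}) and the choice $X\perp u$, $Y=u$ gives $g(X,Q(u))=-g(\nabla_X\xi,u)$ for all $X\perp u$, from which, using that both $Q$ and $\nabla\xi-\lambda_0 I$ are skew-symmetric, one concludes $Q=\nabla\xi-\lambda_0 I$.

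For the second equation, $(\mathcal L_V\widetilde G)(X^h,Y^t)=0$ reduces via Lemma~\ref{gen-case} (and $Y^t(A^i)=0$ from~\eqref{Pr2}) to $g\!\left((\nabla_X Q)(u)+\kappa\,g(X,u)\xi-\kappa\,g(\xi,u)X,\,Y\right)=0$ for all $Y\perp u$, i.e. the vector in the bracket is proportional to $u$. Since $\xi$ is homothetic on a space of constant curvature $\kappa$, \eqref{hom-curv} gives $\nabla^2\xi(Z,X)=-R(\xi,X)Z=\kappa(g(\xi,Z)X-g(X,Z)\xi)$, so that bracket equals $(\nabla_X(Q-\nabla\xi))(u)=(\nabla_X P_0)(u)$, where $P_0:=Q-\nabla\xi+\lambda_0 I$; as $P_0$ is skew-symmetric so is $\nabla_X P_0$, hence $(\nabla_X P_0)(u)\perp u$, and being also proportional to $u$ it vanishes. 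Thus $P_0$ is a parallel skew-symmetric tensor, and from~\eqref{comp} and~\eqref{tan} (with $b=0$) one checks directly that $\xi^{\bar c}=\xi^h+\tilde{\iota}(\nabla\xi-\lambda_0 I)$, so $V=\xi^{\bar c}+\tilde{\iota}(P_0)$. If $d\neq 0$, the first equation already yielded $P_0=0$, hence $V=\xi^{\bar c}$; then $\theta=\nu d$ combined with $\lambda=\mu$ and $\lambda_0=\mu-\nu$ gives~\eqref{eq1}, whose discussion produces cases~ii) and~iv) exactly as in the proof of Theorem~\ref{Ccomplet}. If $d=0$, then $\theta=0$ gives $n=2$ (case~i)) or $\kappa=0$ (case~iii)); substituting into~\eqref{mu-nu} gives the stated values of $\lambda$ and $\lambda_0$, and setting $P:=P_0+(\lambda-\nu)I$ (parallel, with $P-(\lambda-\nu)I=P_0$ skew-symmetric) gives $V=\xi^{\bar c}+\tilde{\iota}(P)$, since $\tilde{\iota}$ is insensitive to adding a multiple of $I$ (the tangential lift of $u$ vanishes on $T_1M$, by Convention~\ref{conv}).

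For the converse one verifies~\eqref{GC3} in each of the four cases. The $\xi^{\bar c}$ part is exactly the computation in the proof of Theorem~\ref{Ccomplet}: using Lemma~\ref{Lie-complete} together with $\mathcal L_\xi g=2\lambda_0 g$ and~\eqref{hom-curv}, and the fact that in each case the identities $\lambda=\mu$, $\lambda-\nu=\lambda_0$, $\lambda d-\theta=\lambda_0 d$ hold (cf.~\eqref{cond1}), one sees that $\mathcal L_{\xi^{\bar c}}\widetilde G$ coincides with the right-hand side of~\eqref{GC3}. For the extra term $\tilde{\iota}(P_0)$ occurring in cases~i) and~iii) (where $d=0$), applying Lemma~\ref{gen-case} with $A^i=0$ and $B^i=(P_0)^i_j u^j$ and using that $P_0$ is parallel and skew-symmetric shows that $\mathcal L_{\tilde{\iota}(P_0)}\widetilde G\equiv 0$, so it does not disturb the soliton equation. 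I expect the only real difficulty to be organizational: carrying out the expansions coming from Lemma~\ref{gen-case}, separating cleanly the $g(X,Y)$-, $g(X,u)g(Y,u)$- and $Q$-dependent pieces, and keeping track of the $\bmod\,I$ ambiguity of $\tilde{\iota}$ so that the tensor $P$ appearing in the statement matches the parallel skew-symmetric tensor produced by the argument.
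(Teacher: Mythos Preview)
Your proposal is correct and follows essentially the same route as the paper: use Proposition~\ref{tang-V} to pin down $\lambda=\mu$ and the tangential part $t\{V\}=\tilde\iota(Q)$, identify the horizontal part as $\xi^h$ from the fiber-preserving hypothesis, and then read off the constraints from Lemma~\ref{gen-case} against \eqref{GC3}. Your organization is slightly cleaner in two places: you obtain $h\{V\}=\xi^h$ directly from the projection $\underline V$ rather than via the computation $X^t(A^l)=0$, and you treat the mixed equation $(\mathcal L_V\widetilde G)(X^h,Y^t)=0$ uniformly (deducing $\nabla P_0=0$ once, via \eqref{hom-curv} and the skew-symmetry trick) before splitting on $d$, whereas the paper splits on $d$ first and, in the $d\neq 0$ case, falls back on Theorem~\ref{Ccomplet} after showing $V=\xi^{\bar c}$. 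Your introduction of $P_0=Q-\nabla\xi+\lambda_0 I$ and the final adjustment $P=P_0+(\lambda-\nu)I$ also tracks the $\bmod\,I$ ambiguity of $\tilde\iota$ more transparently than the paper's choice $P=Q-\nabla\xi$.
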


\begin{Proof}
Let $V$ be a fiber-preserving vector field on $T_1M$. Then
\begin{equation}
h\{[V,X^t](x,u)\}=0,
\end{equation}
 for all $(x,u)\in T_1M $ and $X\in M_x,$ where $h$ stands for horizontal component. Using the identity $B^i(x,u)u_i = 0$, we obtain $h\{[V,X^t](x,u)\} = h\{−X^t(A^l)\frac{\partial}{\partial_l} \}$.  Then $h\{[V,X^t](x,u)\} =0$ if and only if
 \begin{equation}
 X^t(A^l)=0,
 \end{equation}
 for all $(x,u\in T_1M $ and $X\in M_x$. It follows that, for $X\perp u,$, we have  $(dA^l)_{(x,u)}(X^{t}_{(x,u)}) = 0$, for all $X\perp u.$ If we denote by $A^{l}_{x} $ the restriction of $A^l$ to the fiber $S_xM := T_1M\cap M_x$, we then have
 \begin{equation}\label{FBC}
 (dA^{l}_{x})_{(x,u)}(X^{t}_{(x,u)}) = (dA^l)_{(x,u)}(X^{t}) = 0,
 \end{equation}
   for all $X\in M_x $ such that $g(X,u) = 0$. But $dA^{l}_{x}$ is a linear form defined on the tangent space $(S_xM)_{(x,u)} = \{X^{t }_{(x,u)}/ X \in M_x, g(X,u) = 0\}.$ Thus, by \eqref{FBC} it follows that
  $(dA^{l}_{x})_{(x,u)}$ vanishes identically on $(S_xM)_{(x,u)}$. We conclude that the restriction $A^{l}_{x}$ is constant on $S_xM$. Therefore, for any $x \in M,$ there is a $C^{\infty}$-function $\xi^l$ on $M$, such that
  \begin{equation}
  A^{l}_{(x,u)} = \xi^l(x).
  \end{equation}
  i.e, $\pi_*(V)=\xi$, where $\xi$ is the vector field on $M$ given locally by $\xi=\sum_{i=1}^{n}\xi^i(\frac{\partial}{\partial x^i})$. It follows then that $h\{V\}=\xi^h$. But, by Proposition \ref{tang-V}, we have $t\{V\}=\tilde{\iota} Q$, for some skew symmetric $(1,1)$-tensor field $Q$ on $M$. We deduce that $V$ is given by $V = \xi^h+\tilde{\iota} Q$.  It is easy to see from  Lemma \ref{gen-case} that, for all $(x,u)\in  T_1M,$  $X,Y\in  M_x,$ we have
  \begin{equation}\label{420}
  \left\{
  \begin{array}{ll}
  \left(\mathcal{L}_{V}\widetilde{G}\right)_{(x,u)}(X^h,Y^h) =& (a+c)\left(\mathcal{L}_{\xi}g\right)_x(X,Y)\\
                                         & +d[g(\nabla_X\xi,u)g(Y,u)+g(\nabla_Y\xi,u)g(X,u)\\
                                         & +g(Q(u),X)g(Y,u)+g(Q(u),Y)g(X,u)];\\[4pt]
  \left(\mathcal{L}_{V}\widetilde{G}\right)_{(x,u)}(X^h,Y^t) =& ag(R(\xi_x,X)u+\nabla_X Q(u),Y);\\[4pt]
  \left(\mathcal{L}_{V}\widetilde{G}\right)_{(x,u)}(X^t,Y^t) =& a[g(Q(X),Y)+g(Q(Y),X)].
  \end{array}
  \right.
  \end{equation}
  Putting, in the first equation of \eqref{420}, $X=Y=u$ and using \eqref{GC3}, we get
  \begin{equation}
  \mathcal{L}_{\xi}g(X,X)=2(\lambda-\nu)\|X\|^2,
  \end{equation}
  for all $X \perp u$. Since $u$ is arbitrary, the previous identity holds for all $X \in TM$. Then, by bilinearity, we get
  \begin{equation}\label{homot1}
  \mathcal{L}_{\xi}g(X,Y)=2(\lambda-\nu)g(X,Y),
  \end{equation}
  for all $x \in M$ and $X, Y \in M_x$.
  Taking $X = u$ and $Y \perp u$ into the first equation of \eqref{420} we obtain, by virtue of \eqref{GC3} and \eqref{homot1},
  \begin{equation}\label{prs}
  d\left[g(Q(u),Y)+g(\nabla_Y\xi,u)\right]=0,
  \end{equation}
  for all $Y \perp u$. So,we have two possibilities:\\
  \textbf{Case 1:} If $d\neq 0$, then from \eqref{prs} we obtain $g(Q(u),Y)=-g(\nabla_Y\xi,u)$, for all $Y \perp u$. But we have, by \eqref{homot1},  $g(\nabla_Y\xi,u)+g(\nabla_Y\xi,u)=2(\lambda-\nu)g(Y,u)=0$, for all $Y \perp u$. It follows that
  \begin{equation}\label{L1}
  g(Q(u)-\nabla_u\xi,Y)=0,
  \end{equation}
 for all $Y\perp u$. Now, since $Q$ is skew-symmetric, then we have, by virtue of \eqref{homot1},
 \begin{equation}\label{L2}
g(Q(u)-\nabla_u\xi,u)=-g(\nabla_u\xi,u)=(\nu-\lambda),
 \end{equation}
 for all $(x,u)\in T_1M$. Combining \eqref{L1} and \eqref{L2}, we obtain $Q(u) =\nabla_u\xi+(\nu-\lambda)u,$ and consequently $(\tilde{\iota}Q)_{(x,u)}=(\nabla_u\xi)^t_{(x,u)},$ since $u^t_{(x,u)}=0$. We deduce then that $V_{(x,u)}=\xi^{h}_{(x,u)}+(\nabla_u\xi)^t=\xi^{\bar{c}}_{(x,u)}$, i.e.
 \begin{equation}
 V=\xi^{\bar{c}}.
 \end{equation}
 \textbf{Case 2:} If $d=0$, then we consider the $(1,1)$ tensor field $P$ given by $P(X)=Q(X)-\nabla_X\xi$, for all $X \in TM$. Then $V$ can be expressed as $V=\xi^{\bar{c}}+\tilde{i}P$. Recall that, by \eqref{homot1}, $\xi$ is homothetic and hence \eqref{hom-curv} holds. Then the system \eqref{420} reduces to
  	\begin{equation}\label{GCd0}
  \left\{
  \begin{array}{ll}
  \left(\mathcal{L}_{V}\widetilde{G}\right)_{(x,u)}(X^h,Y^h) =& 2(a+c)(\lambda-\nu)g_x(X,Y);\\[4pt]
  \left(\mathcal{L}_{V}\widetilde{G}\right)_{(x,u)}(X^h,Y^t) =& ag(\nabla_XP(u),Y);\\[4pt]
  \left(\mathcal{L}_{V}\widetilde{G}\right)_{(x,u)}(X^t,Y^t) =& a[g(P(X),Y)+g(P(Y),X)].
  \end{array}
  \right.
  \end{equation}
 Comparing with \eqref{GC3}, we obtain $\theta=0$, $\nabla P=0$ and $g(P(X),Y)+g(P(Y),X)=2(\lambda-\nu)g(X,Y)$. Now $\theta=0$ means by \eqref{mu-nu} that either $n=2$ or $k=0$.

 The converse is straightforward.
\end{Proof}

\begin{flushleft}
Laboratory of Algebra, Geometry and Arithmetic,\\
Department of Mathematics,\\
Faculty of sciences Dhar El Mahraz,\\
University Sidi Mohamed Ben Abdallah,\\
B.P. 1796, Fes-Atlas,
Fez, Morocco.\\
E-mail address: mtk abbassi@Yahoo.fr; amri.noura1992@gmail.com.
\end{flushleft}

\begin{thebibliography}{xx}
	
	{
\bibitem{KA} M.T.K. Abbassi, N. Amri,
    \emph{On conformal vector field on unit tangent sphere bundles with $g$-natural metrics},
    to appear in Czech. Math. J.
	
\bibitem{KAC} M.T.K. Abbassi, N. Amri, G. Calvaruso,
    \emph{Kaluza-Klein type Ricci Solitons on Unit Tangent Sphere Bundles},
    Diff. Geom. Appl. \textbf{59} (2018), 184-203.
		
\bibitem{Abb-Cal1} M.T.K. Abbassi, G. Calvaruso,
	\emph{$g$-natural contact metrics on unit tangent sphere bundles},
    Monaths. Math.  \textbf{151} (2007), 89--109.
		
\bibitem{Abb-Cal4} M.T.K. Abbassi, G. Calvaruso,
	\emph{$g$-natural metrics of constant curvature on unit tangent sphere bundles},
		Arch. Math. (Brno) \textbf{48} (2012), 81--95.
		
\bibitem{ACP1} M.T.K. Abbassi, G. Calvaruso, D. Perrone,
    \emph{Harmonic sections of tangent bundles equipped with Riemannian $g$-natural metrics},
    Quart. J. Math., {\bf 62} (2011), 259--288.
		
\bibitem{Abb-Kow1} M.T.K. Abbassi,  O. Kowalski,
	\emph{On $g$-natural metrics with constant scalar curvature on unit tangent sphere bundles},
    Topics in Almost Hermitian Geometry and related fields, Proc. of the Int. Conf. in Honor of K. Sekigawa's 60th birthday, World Scientific, 2005, 1--29.
		
\bibitem{Abb-Kow2} M.T.K. Abbassi, O. Kowalski,
	\emph{Naturality of homogeneous metrics on Stiefel manifolds $SO(m+1)/SO(m-1)$},
    Diff. Geom. Appl., \textbf{28} (2010), 131--139.
		
\bibitem{Abb-Kow3}K.M.T. Abbassi, O. Kowalski,
	\emph{On Einstein Riemannian $g$-natural metrics on unit tangent sphere bundles},
    Ann. Global. Anal. Geom. \textbf{38} (2010), 11--20.
		
\bibitem{Abb-Sar4} M.T.K. Abbassi, M. Sarih,
	\emph{On some hereditary properties of Riemannian $g$-natural metrics on tangent bundles of Riemannian manifolds},
    Diff. Geom. Appl. \textbf{22} (2005), 19--47.

\bibitem{Abb-Sar5} M.T.K. Abbassi, M. Sarih,
	\emph{On Riemannian $g$-natural metrics of the form $a.g^s + b.g^h + c.g^v$ on the tangent bundle of a Riemannian manifold $(M, g)$},
    Mediterr. J. Math. \textbf{2} (2005), 19--43.
		
		
\bibitem{BGR} M. Brozos-Vazquez, G. Calvaruso, E. Garcia-Rio, S. Gavino-Fernandez,
    {\em Three-dimensional Lorentzian homogeneous  Ricci solitons},
    Israel J. Math., {\bf 188} (2012), 385--403.
		
\bibitem{CMM} G. Calvaruso, V. Martin-Molina,
    \emph{Paracontact metric structures on the unit tangent sphere bundle},
    Ann. Mat. Pura Appl., {\bf 194} (2015), 1359--1380.
		
\bibitem{CP} G. Calvaruso, D. Perrone,
    \emph{Homogeneous and $H$-contact unit tangent sphere bundles},
    Austral. J. Math., {\bf 88} (2010),  323--337.
		
\bibitem{CP2} G. Calvaruso, D. Perrone,
    \emph{Geometry of Kaluza-Klein metrics on the sphere $\mathbb S^3$},
    Ann. Mat. Pura Appl., {\bf 192} (2013), 879--900.
		
\bibitem{CP3} G. Calvaruso, D. Perrone,
    \emph{Metrics of Kaluza-Klein type on the anti-de Sitter space $\mathbb{H}_1^3$},
    Math. Nachr., {\bf 287} (2014), 885--902.
		
\bibitem{Cao} H. D. Cao,
    \emph{Geometry of Ricci solitons},
    Chinese Ann. Math. Ser. B \textbf{27B} (2006), 121-–142.
		
\bibitem{Cho-Kno} B. Chow, D. Knopf,
    \emph{The Ricci Flow: An Introduction,}
	Mathematical Surveys and Monographs, \textbf{110}. American Mathematical Society, Providence, 2004.
		
\bibitem{Hall} G. Hall,
    \emph{Symmetries and curvature structure in general relativity},
    World Sci. Lect. Notes in Physics, vol. \textbf{46}, 2004.
		
\bibitem{Kol-Mic-Slo} I. Kol\'a\v r, P.W. Michor, J. Slov\'ak,
	\emph{Natural operations in differential geometry},
	Springer-Verlag, Berlin, 1993.
		
\bibitem{Kow-Sek1} O. Kowalski, M. Sekizawa,
	\emph{Natural transformations of Riemannian metrics on manifolds to metrics on tangent bundles -a classification-},
    Bull. Tokyo Gakugei Univ. \textbf{40} (1988), 1--29.
		
\bibitem{Per} D. Perrone,
	\emph{Geodesic Ricci solitons on unit tangent sphere bundles},
    Ann. Global. Anal .Geom. 44 (2013), 91--103.
		
\bibitem{Kon1} T. Konno,
    \emph{Killing vector fields on tangent sphere bundles},
    Kodai Math. J. {\bf 21} (1998), 61--72.
		
\bibitem{Wood} C.M. Wood,
    \emph{An existence theorem for harmonic sections},
    Manuscripta Math., {\bf 68} (1990), 69--75.
	}
	
\end{thebibliography}
\end{document}